\documentclass[pdflatex,sn-mathphys-num]{sn-jnl}

\usepackage{graphicx}%
\usepackage{multirow}%
\usepackage{amsmath,amssymb,amsfonts}%
\usepackage{amsthm}%
\usepackage{mathrsfs}%
\usepackage[title]{appendix}%
\usepackage{xcolor}%
\usepackage{textcomp}%
\usepackage{manyfoot}%
\usepackage{booktabs}%
\usepackage{algorithm}%
\usepackage{algorithmicx}%
\usepackage{algpseudocode}%
\usepackage{listings}%
\usepackage{todonotes}
\usepackage{todonotes}
\usepackage[framemethod=tikz]{mdframed}
\mdfsetup{%
  linewidth=1.25pt,
  userdefinedwidth=\textwidth,
  roundcorner=10pt,
}

\theoremstyle{thmstyleone}%
\newtheorem{theorem}{Theorem}[section]
\newtheorem{proposition}{Proposition}[section]%
\newtheorem{assumption}{Assumption}[section]
\newtheorem{corollary}{Corollary}[section]
\newtheorem{lemma}{Lemma}[section]

\theoremstyle{thmstyletwo}%
\newtheorem{remark}{Remark}[section]%

\theoremstyle{thmstylethree}%
\newtheorem{definition}{Definition}[section]%
\newtheorem{property}{Property}[section]

\usepackage{color}
\usepackage{hyperref}
\usepackage{multirow}
\usepackage{mathtools}
\usepackage{enumerate}
\def\R{{\mathbb{R}}}
\def\N{{\mathbb{N}}}

\def\dist{{\rm dist}}
\def\dom{\mathop{\rm dom}}

\def\argmin{\mathop{\rm argmin}}

\numberwithin{equation}{section}

\begin{document}

\title[KL-based convergence analysis with approximate weakly convex subdifferentials]
{Kurdyka–{\L}ojasiewicz-based convergence analysis with approximate weakly convex subdifferentials: bundle, quadratic penalty, and moving balls approximation methods}


	\author[1]{\fnm{Jiefeng} \sur{Xu}}\email{jiefeng.xu@polyu.edu.hk}

    \author*[2]{\fnm{Guoyin} \sur{Li}}\email{g.li@unsw.edu.au}
	
	\author[1]{\fnm{Ting Kei} \sur{Pong}}\email{tk.pong@polyu.edu.hk}

	\affil[1]{\orgdiv{Department of Applied Mathematics}, \orgname{the Hong Kong Polytechnic University}, \city{Hong Kong}, \country{China}}
	
	\affil[2]{\orgdiv{Department of Applied Mathematics}, \orgname{University of New South Wales}, \city{Sydney}, \country{Australia}}


\abstract{The Kurdyka–{\L}ojasiewicz (KL) property has been widely used for establishing global sequential convergence and analyzing local convergence rate of optimization algorithms. Most existing analysis frameworks rely on the construction of a suitable KL potential function such that along the sequence generated, it admits sufficient descents and its {\em exact} subdifferentials are suitably controlled by successive changes of the iterates. 
Nevertheless, a handful of important algorithms, such as bundle methods and quadratic penalty methods, are not known to have potential functions verifying the aforementioned properties, rendering their convergence analysis out of the reach of existing KL-based convergence analysis template. In this paper, we develop a new convergence analysis framework to extend the applicability of KL-based analysis. Our framework is built on a weakly convex KL potential function $\psi$. It allows {\em approximate} sufficient descents, the use of {\em diminishing} stepsizes, and controls on {\em enlarged} subdifferentials of $\psi$ known as the $\varrho$-weakly-convex-$\varepsilon$-subdifferentials. We illustrate the usefulness of our framework by establishing the global sequential convergence and analyzing the local convergence rate of some recent variants of bundle method, quadratic penalty method and moving
balls approximation method. Specifically, we show that these methods fall under our framework by presenting {\em explicit} and {\em natural} choices of $\psi$, and derive suitable choices of model parameters, penalty parameters {and constraint shrinkage parameters}, respectively, for establishing their global sequential convergence and local convergence rate based on the KL property and exponent of $\psi$. }

\keywords{Kurdyka–{\L}ojasiewicz property, approximate weakly convex subdifferential, bundle method, quadratic penalty method, moving
balls approximation method}



\maketitle

\section{Introduction}



\medskip 

The Kurdyka–{\L}ojasiewicz (KL) inequality and the associated KL convergence framework have become indispensable tools for establishing the convergence of iterative algorithms in modern nonconvex and nonsmooth optimization.  Since the seminal developments of \cite{10AttouchBolteRedontSoubeyran,13AttouchBolteSvaiter,14BolteSabachTeboulle} which insulate the important role of KL inequality in analyzing convergence of descent methods, the framework has been successfully applied to a wide range of optimization algorithms, including gradient-type, proximal, block-coordinate, splitting, alternating minimization methods and their accelerated variants; see, e.g., \cite{10AttouchBolteRedontSoubeyran,19Ochs,22BotDaoLi,16LP,13XuYin,09AttouchBolte}. 

\medskip 

A central feature of KL analysis framework, as discussed in \cite{10AttouchBolteRedontSoubeyran,13AttouchBolteSvaiter,14BolteSabachTeboulle}, is that convergence is obtained from combining\vspace{-0.15 cm}
\begin{enumerate}[({H}1)]
    \item a sufficient descent property;
    \item a suitable control of the stationarity measure, expressed in terms of the {\em exact} subdifferential at the current iterate, by successive changes of the iterates; 
    \item a suitable continuity or limit-point condition.
\end{enumerate}  \vspace{-0.15 cm}
Upon verifying these conditions on a suitable (typically algorithm-specific) potential function and supposing that such a function satisfies the KL property (i.e., the KL inequality holds at every point in the domain of its subdifferential), then one can use the KL inequality to establish convergence of the iterates and, in many cases, to derive explicit convergence rates; see, e.g., \cite{10AttouchBolteRedontSoubeyran,14BolteSabachTeboulle,18LP,19Ochs,22BotDaoLi}. For example, in \cite{10AttouchBolteRedontSoubeyran,13AttouchBolteSvaiter, 14BolteSabachTeboulle}, the authors  laid the foundations for using the KL inequality as a general framework for analyzing proximal-type algorithms under assumptions (H1)–(H3), especially for nonconvex and nonsmooth optimization problems with sum-structured objective functions. Subsequently, a flexible framework was developed in \cite{19Ochs} based on bivariate merit functions, allowing for the treatment of inertial methods. This framework was later extended to accommodate nonsmooth, nonconvex fractional potential functions \cite{22BotDaoLi,YKZH25}, and to handle optimization algorithms with inexactly solved subproblems \cite{21Sun}. To obtain explicit convergence rates from the KL inequality, it is often necessary to know the associated exponent, commonly referred to as the KL exponent. In this direction, some calculus rules for KL exponents were established in \cite{18LP,22Peiran}, which enable KL exponents to be derived from simpler settings, and demonstrated their application to the convergence-rate analysis of first-order methods.  For further recent developments, see \cite{25Boris_Nesterov}.

\medskip

While existing KL-based convergence analyses are well established for a broad class of descent methods, their direct applicability to several important algorithms remains limited when the standard assumptions (H1)–(H3) fail. This limitation arises in particular from algorithm-dependent approximation mechanisms where iterates may be generated using bundle models, variable smoothing techniques, or penalty functions with varying penalty parameters, etc.; see, e.g., \cite{BW21,26LMX,25BDL,ASSS23}. 
Indeed, for many of these algorithms, either a potential function verifying (H1) is unknown, or the stationarity measure based on {\em exact subdifferential} of the potential function cannot be readily/suitably controlled as required in (H2), rendering the analysis of these algorithms out of the reach of standard KL-based convergence analysis template. 
It is also worth noting that non-diminishing stepsizes, including constant stepsizes, are a natural feature of many descent-based methods satisfying (H1)--(H3). On the other hand, diminishing stepsizes arise naturally in several important settings, including subgradient, stochastic approximation and penalty-type methods (see, e.g., \cite{23LMQ,25QiuJiangMilzarek,Robinson99,26LMX,24AlacaogluWright}), where nonsmoothness, inexactness, constraint violation, or stochastic errors may preclude the use of non-diminishing stepsizes. 

\medskip
Several recent works have relaxed or modified (H1)–(H2) to accommodate specific numerical methods, including nonmonotone descent methods with non-diminishing stepsizes \cite{23QianPan}, some proximal-gradient-type algorithms with diminishing stepsizes \cite{25QiuJiangMilzarek,24QMLM} (under suitable additional conditions), and a variable-smoothing algorithm for specific convex composite and fractional structures \cite{26Tao}. These frameworks allow approximate descent and / or relaxed bounds on the {\em exact} subdifferential of the potential function. Nevertheless, it is unclear whether these frameworks can be directly applied or readily adapted to analyze methods involving bundle models or penalty functions with varying penalty parameters. One major difficulty, as mentioned before, is the construction of a potential function whose {\em exact} subdifferential along the iterates can be suitably controlled. Indeed, some known potential functions are highly algorithm-specific, especially for algorithms with subproblems constructed via sophisticated approximation techniques; see, e.g., the potential functions used in  \cite{20BCP,16BP,21YPL} for analyzing variants of the moving balls approximation method.


\medskip

In this paper, we develop a unified and flexible framework that can accommodate diverse algorithmic settings (including the use of diminishing stepsizes) by allowing further {\em controlled} violations of (H1) and (H2), potentially easing the difficulty in the search / construction of a suitable potential function. Specifically, our framework is based on a {\em weakly convex} potential function $\psi$ and we replace the stationarity measure in (H2) with one that is based on the $\varrho$-weakly-convex-$\varepsilon$-subdifferential of $\psi$; see (C2) in Section~\ref{sec_applications}. With suitable choices of algorithmic parameters, we establish global sequential convergence based on the KL property of $\psi$, and deduce the local convergence rate based on the KL exponent of $\psi$. Our framework can be applied to the following four classes of algorithms (see Sections~\ref{sec3.1}, \ref{sec3.2}, \ref{sec3.3}, and \ref{sec3.4}, respectively): 
\vspace{-0.346cm}
\begin{enumerate}[{\bf {M}1}]
 \item a class of bundle methods for weakly convex optimization \cite{ASSS23};
 \item a quadratic (smooth) penalty method for minimizing a weakly convex objective 
 over the intersection of a closed convex set and smooth equality constraints \cite{26LMX};
 \item the inexact level constrained proximal gradient method for minimizing a weakly convex objective over (possibly multiple) weakly convex inequality constraints \cite{25BDL}, which is a form of moving balls approximation method;
 \item a variable smoothing method for weakly convex optimization \cite{BW21}.
\end{enumerate}

One  notable feature of our framework is that the potential function $\psi$ often takes a natural and simple form.
Specifically, the potential function $\psi$ in our framework can be taken to be just the {\em original objective function} for {\bf M1} and {\bf M4}, while for {\bf M2} and {\bf M3} that involve explicit equality / inequality constraints, we take $\psi$ naturally as the {\em classical exact penalty function}.
To the best of our knowledge, for {\bf M1} and {\bf M2}, our results are the first global sequential convergence and local convergence rate results based on KL property, while for {\bf M3} and {\bf M4}, our results are the first global sequential convergence and local convergence rate results in the nonconvex setting that are derived based on the KL property of the {\em classical exact penalty function} and the {\em original objective}, respectively. Note that global sequential convergence and convergence rate results for some variants of {\bf M3} were previously analyzed based on the KL property of a special potential function constructed based on the subproblems involved \cite{20BCP,16BP,21YPL}, while global sequential convergence for a variant of {\bf M4} has been established very recently in \cite{26Tao} based on the KL property of a special potential function involving the Moreau envelope and its epigraphical closure.

\medskip

Our use of $\varrho$-weakly-convex-$\varepsilon$-subdifferential of $\psi$ is motivated by convergence analysis of subgradient and bundle-type methods in recent works \cite{24AAS,Robinson99,ASSS23}, which exploited $\varrho$-weakly-convex-$\varepsilon$-subdifferential and metric subregularity / error bound conditions to deduce local linear convergence of their methods. Unlike their work, our analysis is based on the KL property, and we also derive local convergence rates (linear or sublinear convergence) according to the value of the KL exponent. Our framework allows the use of diminishing stepsizes (see (C1) in Section~\ref{sec_applications}); although our framework is designed for (possibly) nonsmooth deterministic optimization problems, it is interesting to note that our conditions on valid stepsizes / parameters (see Property~\ref{ass-param}, especially \eqref{eq-sum-bdd}) bear resemblance to those used in recent works that developed KL-based convergence analysis for some proximal-gradient-type algorithms with diminishing stepsizes; see, e.g., \cite[Eq.~(4.3)]{25QiuJiangMilzarek} and \cite[Section~2.2~(A.4)]{24QMLM}. On the other hand, these existing works are primarily motivated by stochastic
proximal-gradient-type algorithms, and their analyses do not directly
extend to the four classes of algorithms considered
above. 

\medskip

The rest of the paper is organized as follows. We present notation and preliminaries in Section~\ref{sec2}. Our algorithmic framework is presented in Section~\ref{sec_applications}, and the discussions concerning the aforementioned four classes of algorithms are presented in Sections~\ref{sec3.1}, \ref{sec3.2}, \ref{sec3.3} and \ref{sec3.4}, respectively. Finally, we present the convergence analysis of the sequence generated by our algorithmic framework in Section~\ref{sec4}.

\section{Preliminary and notation}\label{sec2}
In this paper, we let $\mathbb{X}$ be a finite dimensional real Hilbert space with inner product $\langle\cdot,\cdot\rangle$ and induced norm $\|\cdot\|$. 
We also let $\mathbb{N}_0$ and $\mathbb{N}$ be the sets of nonnegative and positive integers, respectively, and write 
$[m]\coloneqq \{1,\ldots,m\}$ for any $m\in \mathbb{N}$.

We say that an extended-real-valued function $h:\mathbb{X}\to[-\infty,\infty]$ is \emph{proper} if $h(x)>-\infty$ for every $x\in\mathbb{X}$ and its domain
$\dom h\coloneqq\{x\in\mathbb{X}: h(x)<\infty\}$ is nonempty. A proper function $h$ is said to be \emph{closed} if it is lower semicontinuous.
For a proper closed function $h$, the regular (Fréchet) subdifferential and the (limiting) subdifferential of $h$ at an $x\in\dom h$ are defined, respectively, by
\begin{align}
& \widehat{\partial} h(x) \coloneqq \left\{ 
\xi \in\mathbb{X} : \liminf_{\substack{y\to x, y\neq x}} \frac{h(y)-h(x)-\langle \xi,y-x\rangle}{\|y-x\|} \ge 0
\right\}, \notag
\\ & \partial h(x) \coloneqq \left\{
\xi \in\mathbb{X}: \exists\, x^k\xrightarrow{h} x,\; \xi^k\to \xi 
\text{ with } \xi^k\in\widehat{\partial}h(x^k)\text{ for each }k
\right\}, \notag 
\end{align}
where $x^k\xrightarrow{h}x$ means $x^k\to x$ and $h(x^k)\to h(x)$. The above subdifferentials are set to $\emptyset$ if $x\notin \dom h$.
For a proper closed convex function $h$, the above two subdifferentials
coincide with the classical convex subdifferential, i.e., at any $x\in \mathbb{X}$, 
\[
\partial h(x) = \widehat{\partial} h(x) = \bigl\{ \xi \in \mathbb{X} : h(y) \ge h(x) + \langle \xi, y-x\rangle
\ \ \ \forall\, y \in \mathbb{X} \bigr\};
\]
see, e.g., \cite[Proposition 8.12]{RW98}.
For $\varepsilon \ge 0$, the \emph{$\varepsilon$-subdifferential} of a proper closed convex
function $h$ at an $x\in \mathbb{X}$ is defined as
\[
\partial_\varepsilon h(x) \coloneqq \bigl\{ \xi \in \mathbb{X} : h(y) \ge h(x) +
\langle \xi, y-x\rangle - \varepsilon \ \ \ \forall\, y \in \mathbb{X} \bigr\}.
\]
Clearly, $\partial_0 h(x) = \partial h(x)$, and $\partial_{\varepsilon_1} h(x)\subseteq \partial_{\varepsilon_2} h(x)$
whenever $0\le \varepsilon_1\le\varepsilon_2$.

A proper closed function
$h:\mathbb{X}\to(-\infty,\infty]$ is said to be
$\varrho$-weakly convex for some constant $\varrho\ge0$ if
$h+\frac{\varrho}{2}\|\cdot\|^2$ is convex.
By \cite[Exercise 8.8]{RW98}, 
for every $\varrho$-weakly convex function $h$,
we have for any $x\in \mathbb{X}$ that,
\begin{align}
\partial h(x) & = \partial\!\left(h+\tfrac{\varrho}{2}\|\cdot\|^2\right)(x)
-\varrho x \notag
\\ & = \left\{\xi \in \mathbb{X} : h(y)  
\ge h(x) + \langle \xi, y-x\rangle 
- \tfrac{\varrho}{2}\|y-x\|^2\ \ \ \forall y \in \mathbb{X}\right\}.
\label{eq-wc-subd}
\end{align}
For a $\varrho$-weakly convex function $h$ and a constant $\varepsilon\ge0$, the $\varrho$-weakly-convex-$\varepsilon$-subdifferential ($\varrho$-w.c.-$\varepsilon$-subdifferential) of $h$ at an $x\in \mathbb{X}$ is defined as
\begin{align}
    \partial_{\varepsilon}^{\varrho} h(x)
&  \coloneqq \partial_{\varepsilon} (h + \tfrac{\varrho}{2}\|\cdot\|^2)(x)
- \varrho x \notag
\\ & = \left\{\xi \in \mathbb{X} : h(y)  
\ge h(x) + \langle \xi, y-x\rangle 
- \tfrac{\varrho}{2}\|y-x\|^2 - \varepsilon\ \ \ \forall y \in \mathbb{X}\right\}. 
\label{eq-eps}
\end{align}

Now, we recall the Kurdyka-{\L}ojasiewicz (KL) property \cite{10AttouchBolteRedontSoubeyran,13AttouchBolteSvaiter, 14BolteSabachTeboulle}. 
For notational simplicity, we let $\mathcal{Q}_{\eta}$ with $\eta \in (0, \infty]$ be the set 
of all concave continuous functions $\varphi: [0, \eta)\to [0, \infty)$
that are continuously differentiable on $(0, \eta)$ with $\varphi^{\prime}>0$
and $\varphi(0) = 0$.
\begin{definition}[KL property]
A proper closed function $h$ is said to satisfy the KL property at $\bar{x}\in 
\dom  \partial h$ if there exist $\eta \in (0, \infty]$, 
$\delta>0$, and $\varphi \in \mathcal{Q}_{\eta}$ such that
\begin{equation*}
\varphi^{\prime}(h(x) - h(\bar{x}))\mathrm{dist} (0, \partial h(x)) \ge 1
\end{equation*}
whenever $x \in \dom  \partial h$ with $\|x - \bar{x}\| \le \delta$ and 
$h(\bar{x}) < h(x) < h(\bar{x}) + \eta$.
A proper closed function $h$ satisfying the KL property at all points in $\dom \partial h$ is called a KL function.
\end{definition}

\begin{definition}[KL exponent]
A proper closed function $h$ is said to satisfy the KL property at $\bar{x}\in 
\dom  \partial h$ with an exponent $\theta\in [0, 1)$
if it satisfies the KL property with $\varphi(t) \coloneqq \kappa t^{1 - \theta}$ for some constant $\kappa>0$, i.e., 
there exist $\eta \in (0, \infty]$, $\delta>0$ and $\widetilde{\kappa}>0$ such that, 
\begin{equation*}
\mathrm{dist} (0, \partial h(x)) 
\ge \widetilde{\kappa}(h(x) - h(\bar{x}))^{\theta}
\end{equation*}
whenever $x \in \dom  \partial h$ with $\|x - \bar{x}\| \le \delta$ and 
$h(\bar{x}) < h(x) < h(\bar{x}) + \eta$.
If $h$ is a KL function and has the same exponent $\theta \in [0,1)$ at all points in $\dom \partial h$, we call $h$ a KL function with exponent $\theta$.
\end{definition}

The KL property is a standard tool for establishing global sequential convergence for algorithms that satisfy conditions (H1), (H2) and (H3) for some nonconvex optimization problems, while the KL exponent is used to derive local convergence rates. As in \cite{14BolteSabachTeboulle}, we will also use the following uniformized KL property to simplify the convergence analysis.

\begin{lemma}[{\cite[Lemma 6]{14BolteSabachTeboulle}, Uniformized KL property}]
\label{lm-KL-unif}
Let $h: \mathbb{X} \rightarrow (-\infty, \infty]$ be a proper closed function and $\Omega$ be a compact subset of $\dom \partial h$. Suppose that $h$ is constant on $\Omega$ and satisfies the KL property at each point of $\Omega$. Then there exist $\delta>0$, $\eta\in (0, \infty]$ and $\varphi \in \mathcal{Q}_\eta$ such that
\begin{equation*}
\varphi^{\prime}(h(x)-h(\bar{x})) \operatorname{dist}(0, \partial h(x)) \ge 1
\end{equation*}
for all $\bar x \in \Omega$ and $x\in \dom \partial h$ satisfying $\mathrm{dist}(x, \Omega) \le \delta$ and 
$h(\bar{x}) < h(x) < h(\bar{x}) + \eta$.
\end{lemma}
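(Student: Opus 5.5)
This is the uniformized KL property; the plan is the standard compactness argument of \cite[Lemma 6]{14BolteSabachTeboulle}.

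Let $\mu$ denote the common value of $h$ on $\Omega$. For each $\bar u\in\Omega$, the KL property at $\bar u$ gives $\delta_{\bar u}>0$, $\eta_{\bar u}\in(0,\infty]$ and $\varphi_{\bar u}\in\mathcal{Q}_{\eta_{\bar u}}$. These satisfy $\varphi_{\bar u}'(h(x)-\mu)\,\dist(0,\partial h(x))\ge 1$ whenever $x\in\dom\partial h$, $\|x-\bar u\|\le\delta_{\bar u}$ and $\mu<h(x)<\mu+\eta_{\bar u}$. The open balls $B(\bar u,\delta_{\bar u}/2)$ cover the compact set $\Omega$, so we extract finitely many centers $\bar u_1,\dots,\bar u_p$.

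Set $\delta:=\min_i\delta_{\bar u_i}/2>0$ and $\eta:=\min_i\eta_{\bar u_i}>0$. Take $x$ with $\dist(x,\Omega)\le\delta$. By compactness there is $\bar x\in\Omega$ with $\|x-\bar x\|\le\delta$, and $\bar x$ lies in some $B(\bar u_i,\delta_{\bar u_i}/2)$. Then
\[
\|x-\bar u_i\|\le \delta+\tfrac{\delta_{\bar u_i}}{2}\le\delta_{\bar u_i},
\]
so the local inequality at $\bar u_i$ applies to $x$ whenever $\mu<h(x)<\mu+\eta$. Since $h(\bar x)=\mu$ for every $\bar x\in\Omega$, the conclusion does not depend on which $\bar x\in\Omega$ appears in the statement.

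The main obstacle is producing a single $\varphi$ that works for all $i$. The natural choice is
\[
\varphi(s):=\sum_{i=1}^p\varphi_{\bar u_i}(s),\qquad s\in[0,\eta).
\]
This $\varphi$ is concave and continuous, satisfies $\varphi(0)=0$, is $C^1$ on $(0,\eta)$, and has $\varphi'>0$ there, so $\varphi\in\mathcal{Q}_\eta$. Moreover $\varphi'(s)\ge\varphi_{\bar u_i}'(s)$ for each $i$, because every summand has positive derivative. Hence
\[
\varphi'(h(x)-\mu)\,\dist(0,\partial h(x))\ge\varphi_{\bar u_i}'(h(x)-\mu)\,\dist(0,\partial h(x))\ge1,
\]
which proves the lemma. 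Taking the minimum of the $\varphi_{\bar u_i}$ instead would not be $C^1$, which is why the sum is used.
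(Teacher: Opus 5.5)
Your proof is correct and is essentially the standard argument of \cite[Lemma~6]{14BolteSabachTeboulle}, which the paper cites rather than reproves. That argument also uses a finite subcover of $\Omega$, takes $\eta$ as the minimum of the local $\eta_{\bar u_i}$, and chooses a uniform $\delta$; your half-radius device is what guarantees that the $\delta$-neighborhood of $\Omega$ lies in the union of the balls. It then takes $\varphi$ as the sum of the local desingularizing functions, so that $\varphi'\ge\varphi_{\bar u_i}'$ for every $i$.
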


KL-based convergence analyses for algorithms allowing approximate sufficient descent and inexact solutions of subproblems have been recently investigated; see, e.g., \cite{YKZH25,23HuLiu,21Sun}. In particular, the following auxiliary function was used in \cite{21Sun,YKZH25} as a potential function to obtain global sequential convergence for the corresponding algorithms, under the KL analysis framework: \begin{equation}\label{eq-F-tau-def}
    \Psi_{\tau}(x,t)
    \coloneqq \psi(x)+|t|^{\tau}\ \ \ \ \forall   (x,t)\in\mathbb{X}\times\mathbb{R},
\end{equation}
where $\tau>1$ is a constant and $\psi:\mathbb{X}\to (-\infty, \infty]$ is a proper closed function.

We establish in the next proposition a relationship between the KL properties of $\psi$ and $\Psi_{\tau}$. This generalizes the corresponding result concerning their KL exponents in \cite[Proposition~4.2]{YKZH25}. The proof is given in Appendix~\ref{AppendixA}.

\begin{proposition}\label{prop-kl}
	Let $\mathbb{X}$ and $\mathbb{Y}$ be two finite-dimensional real Hilbert spaces.
	Let $f:\mathbb{X}\to (-\infty, \infty]$ and $g:\mathbb{Y}\to \mathbb{R}$ be two proper closed functions.
	Define 
	$$
	\Psi(x, y) \coloneqq f(x) + g(y) \quad \ \forall x\in \mathbb{X}, y\in \mathbb{Y}.
	$$
	Suppose that $f$ and $g$ satisfy the KL property at 
	$\bar x \in \dom \partial f$ and $\bar{y} \in \dom \partial g$, respectively.
    If $g$ is continuous at $\bar y$, then $(\bar x,\bar y)\in \dom \partial \Psi$ and $\Psi$ satisfies the KL property at $(\bar x, \bar y)$.
\end{proposition}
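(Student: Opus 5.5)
First I will use the separable sum rule for subdifferentials. For $\Psi(x,y)=f(x)+g(y)$ with $f,g$ proper closed, \cite[Proposition 10.5]{RW98} gives $\partial\Psi(x,y)=\partial f(x)\times\partial g(y)$ for every $(x,y)$; no qualification condition is needed because the variables are separated. Consequently $(\bar x,\bar y)\in\dom\partial\Psi$. Moreover, for any $(x,y)\in\dom\partial\Psi$,
\[
\dist\bigl(0,\partial\Psi(x,y)\bigr)^2=\dist\bigl(0,\partial f(x)\bigr)^2+\dist\bigl(0,\partial g(y)\bigr)^2,
\]
so this distance is at least $\max\{\dist(0,\partial f(x)),\dist(0,\partial g(y))\}$.

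Let $(\eta_f,\delta_f,\varphi_f)$ and $(\eta_g,\delta_g,\varphi_g)$ be the KL data of $f$ at $\bar x$ and of $g$ at $\bar y$. Write $a=f(x)-f(\bar x)$ and $b=g(y)-g(\bar y)$, so that $\Psi(x,y)-\Psi(\bar x,\bar y)=a+b$.

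The main obstacle is that $a$ and $b$ can have opposite signs even when $a+b>0$. For instance, $b<0$ and $a>0$ is possible, and then the KL inequality of $g$ gives no information. Continuity of $g$ at $\bar y$ is used exactly here: shrinking $\delta$ makes $|b|$ as small as we like. Note also that $a$ is not small a priori, since $f$ is only lower semicontinuous. However, $a=(a+b)-b<\eta+|b|$, so $a$ is bounded above as well. In every case where $a+b>0$, at least one of $a,b$ is positive and no smaller than $(a+b)/2$.

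I will take
\[
\varphi(t)\coloneqq\varphi_f(t)+\varphi_g(t)+\varphi_f(t/2)+\varphi_g(t/2)
\]
on $[0,\eta)$ with a small $\eta$. This function is concave, continuous, $C^1$ on $(0,\eta)$ with positive derivative, and vanishes at $0$, so $\varphi\in\mathcal{Q}_\eta$. I choose $\eta\le\min\{\eta_f,\eta_g\}/2$ and $\delta\le\min\{\delta_f,\delta_g\}$, with $\delta$ also small enough that $|b|<\min\{\eta_f,\eta_g\}/2$ whenever $\|y-\bar y\|\le\delta$. Take $(x,y)$ within $\delta$ of $(\bar x,\bar y)$ with $0<a+b<\eta$.

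\textbf{Case $a>0$ and $a\ge b$.} Then $a\ge(a+b)/2$ and $a<\eta+|b|<\eta_f$, so the KL inequality of $f$ holds at $x$. Concavity makes $\varphi_f'$ nonincreasing, so $\varphi_f'(a)\le\varphi_f'((a+b)/2)$. The derivative of the term $\varphi_f(t/2)$ at $t=a+b$ is $\tfrac12\varphi_f'((a+b)/2)$. This gives $\varphi'(a+b)\ge\tfrac12\varphi_f'(a)$.

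\textbf{Case $b>a$.} Then $b>0$ and $b\ge(a+b)/2$, and $b<\eta_g$, so the KL inequality of $g$ applies at $y$. The same argument gives $\varphi'(a+b)\ge\tfrac12\varphi_g'(b)$.

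In either case, $\varphi'(a+b)\,\dist(0,\partial\Psi(x,y))\ge\tfrac12$. Replacing $\varphi$ by $2\varphi$ then yields the KL inequality for $\Psi$ at $(\bar x,\bar y)$.

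The only delicate point to check is that the KL inequality of $f$ requires $a<\eta_f$, and this is why $\delta$ must also control $|b|$ through continuity of $g$. The extra terms $\varphi_f,\varphi_g$ in $\varphi$ are harmless and could be dropped.
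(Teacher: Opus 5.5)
Your proof is correct and follows essentially the same route as the paper's proof. Both use the product formula $\partial\Psi(x,y)=\partial f(x)\times\partial g(y)$ from \cite[Proposition 10.5]{RW98}, split into two cases according to whether $f(x)-f(\bar x)$ or $g(y)-g(\bar y)$ is larger, use that $\varphi_f'$ and $\varphi_g'$ are nonincreasing, and use continuity of $g$ at $\bar y$ to keep $f(x)-f(\bar x)$ below $\eta_f$. The only difference is cosmetic: the paper takes $\widetilde\varphi(t)=2\varphi_1(t/2)+2\varphi_2(t/2)$ directly, so it needs neither your final doubling nor the extra terms $\varphi_f(t)+\varphi_g(t)$.
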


Applying Proposition~\ref{prop-kl} to $\Psi_\tau$ in \eqref{eq-F-tau-def} gives the following immediate corollary.
\begin{corollary}
		\label{cor-kl}
        Let $\psi:\mathbb{X}\to (-\infty, \infty]$ be a proper closed function.
		Suppose that $\psi$ satisfies the KL property at $\bar{x} \in {\rm dom}\, \partial \psi$. Then, for any $\tau>1$, for the $\Psi_\tau$ defined in \eqref{eq-F-tau-def}, it holds that $(\bar x,0)\in \dom \partial \Psi_\tau$ and $\Psi_\tau$ satisfies the KL property at $(\bar{x}, 0)$.	
\end{corollary}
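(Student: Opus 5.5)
The corollary should follow from Proposition~\ref{prop-kl} with $\mathbb{Y}=\mathbb{R}$, $f=\psi$, $g(t)=|t|^{\tau}$, and $(\bar x,\bar y)=(\bar x,0)$. By construction $\Psi(x,t)=\psi(x)+|t|^\tau=\Psi_\tau(x,t)$. The hypotheses on $f=\psi$ are given: $\psi$ is proper and closed and satisfies the KL property at $\bar x\in\dom\partial\psi$. So the only work is to check that $g$ meets the requirements of the proposition at $\bar y=0$.

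For $g$, I will verify three things in order.
\begin{enumerate}[(i)]
\item $g$ is real-valued, proper and closed. Indeed, $g$ is finite everywhere and continuous on $\mathbb{R}$, in particular at $0$.
\item $0\in\dom\partial g$. Since $\tau>1$, $g$ is differentiable with $g'(t)=\tau\,\mathrm{sgn}(t)|t|^{\tau-1}$, so $g'(0)=0$. Because $g$ is convex (or by the smooth case), $\partial g(0)=\{0\}$.
\item $g$ has the KL property at $0$. This is the only step with any content, and it reduces to a direct computation. For $t\neq 0$ we have $\dist(0,\partial g(t))=\tau|t|^{\tau-1}$, and $g(t)-g(0)=|t|^\tau$. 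Take $\varphi(s)=s^{1/\tau}$, i.e., exponent $\theta=1-1/\tau\in(0,1)$ with $\kappa=1$. This $\varphi$ is concave and continuous on $[0,\infty)$, $C^1$ on $(0,\infty)$ with positive derivative, and satisfies $\varphi(0)=0$, so $\varphi\in\mathcal{Q}_\infty$. Then
\[
\varphi'(g(t)-g(0))\,\dist(0,\partial g(t))=\tfrac1\tau |t|^{1-\tau}\cdot \tau|t|^{\tau-1}=1\ge 1
\]
whenever $0<|t|^\tau<\eta$. So the KL inequality holds with any $\delta>0$ and $\eta=\infty$.
\end{enumerate}

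Proposition~\ref{prop-kl} then gives $(\bar x,0)\in\dom\partial\Psi_\tau$ and the KL property of $\Psi_\tau$ at $(\bar x,0)$, as claimed. No step is a real obstacle. The one point needing care is that $\varphi(s)=s^{1/\tau}$ lies in $\mathcal{Q}_\eta$, which needs $\tau>1$ so that $1/\tau<1$ and $\varphi$ is concave.
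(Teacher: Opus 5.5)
Your proposal is correct and takes the same route as the paper, which obtains the corollary immediately by applying Proposition~\ref{prop-kl} with $f=\psi$ and $g=|\cdot|^{\tau}$ on $\mathbb{Y}=\mathbb{R}$. Your checks that $g$ is continuous and $C^1$ with $\partial g(0)=\{0\}$, and that it satisfies the KL inequality at $0$ with $\varphi(s)=s^{1/\tau}\in\mathcal{Q}_\infty$, are accurate and supply the details the paper leaves implicit.
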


We next recall a Br\o{}ndsted--Rockafellar-type relation.

\begin{theorem}[{Br\o{}ndsted-Rockafellar-type  relation; \cite[Theorem~2]{Robinson99}}]\label{th-BRL}
Let $h: \mathbb{X} \to (-\infty, \infty]$ be a proper closed convex function. 
Let $x\in \mathbb{X}$ and $\xi \in \partial_{\varepsilon}h(x)$ for some $\varepsilon\ge 0$.
Then, for each $\gamma>0$, there is a unique $y = y(\gamma)$ such that
\begin{equation*}
\xi - \tfrac{1}{\gamma}y \in \partial h(x + \gamma y)
\quad \mbox{and} \quad 
\|y\| \le \sqrt{\varepsilon}.
\end{equation*}
\end{theorem}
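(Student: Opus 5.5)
The plan is to obtain the point $x+\gamma y$ as a proximal point of $h$, with a prox parameter chosen so that the optimality condition produces exactly the subgradient $\xi-\tfrac1\gamma y$. Then $\|y\|\le\sqrt{\varepsilon}$ and uniqueness should both follow from monotonicity-type inequalities. Throughout, $x\in\dom h$, since otherwise $\partial_\varepsilon h(x)=\emptyset$ by the usual convention and there is nothing to prove.

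\emph{Existence.} Fix $\gamma>0$ and set $\lambda\coloneqq\gamma^2$. Consider
\[
\min_{u\in\mathbb{X}}\ h(u)+\tfrac{1}{2\lambda}\|u-(x+\lambda\xi)\|^2 .
\]
The objective is proper, closed and strongly convex, so there is a unique minimizer $z$. The quadratic term is finite and smooth, so the sum rule and Fermat's rule give $\tfrac{1}{\lambda}(x+\lambda\xi-z)\in\partial h(z)$. Define $y\coloneqq (z-x)/\gamma$, so that $z=x+\gamma y$. Then
\[
\tfrac{1}{\gamma^2}\bigl(x+\gamma^2\xi-x-\gamma y\bigr)=\xi-\tfrac1\gamma y\in\partial h(x+\gamma y).
\]

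\emph{Norm bound.} First apply the $\varepsilon$-subgradient inequality for $\xi$ at the point $u=x+\gamma y$:
\[
h(x+\gamma y)\ge h(x)+\gamma\langle\xi,y\rangle-\varepsilon .
\]
Next apply the exact subgradient inequality at $x+\gamma y$ with subgradient $\xi-\tfrac1\gamma y$, evaluated at $x$:
\[
h(x)\ge h(x+\gamma y)-\gamma\langle\xi,y\rangle+\|y\|^2 .
\]
Both values are finite, since $x+\gamma y\in\dom\partial h$. Adding the two inequalities cancels the $h$-values and the inner products, leaving $0\ge\|y\|^2-\varepsilon$, which is the claimed bound.

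\emph{Uniqueness.} Suppose $y_1$ and $y_2$ both satisfy the inclusion. Monotonicity of $\partial h$ applied to the points $x+\gamma y_i$ gives
\[
\bigl\langle (\xi-\tfrac1\gamma y_1)-(\xi-\tfrac1\gamma y_2),\ \gamma(y_1-y_2)\bigr\rangle\ge0,
\]
that is, $-\|y_1-y_2\|^2\ge 0$, so $y_1=y_2$. Note that uniqueness holds among all $y$ satisfying the inclusion, even without the norm bound.

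The only delicate point is choosing the prox parameter as $\gamma^2$ rather than $\gamma$, so that the optimality condition matches the form $\xi-\tfrac1\gamma y$. Once that is in place, every step is a routine convex-analysis fact.
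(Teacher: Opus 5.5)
Your proof is correct: the proximal step with parameter $\gamma^2$ gives existence, adding the $\varepsilon$-subgradient inequality at $x+\gamma y$ to the exact subgradient inequality at $x$ yields $\|y\|^2\le\varepsilon$, and monotonicity of $\partial h$ gives uniqueness (as you note, even without the norm bound). The paper gives no proof of this statement and only cites Robinson's Theorem~2; your argument is the standard resolvent-based (Minty-type) proof of that result, so there is nothing to add.
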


The following lemma establishes some properties of $\varrho$-w.c.-$\varepsilon$-subdifferentials of weakly convex functions and is analogous to \cite[Theorem~3.1]{24AAS}.
Specifically, we identify an element $\zeta\in \partial h(z)$ in \eqref{eq-x-z-zeta} whose deviation from the given $\xi$ is proportional to $x - z$. Moreover, compared with the last relation in \cite[Theorem~3.1]{24AAS}, the term involving $\epsilon^{\frac12}$ in \eqref{eq-f-x-z} scales with $\|\xi\|$ rather than $\|\xi\|+\kappa$ for some $\kappa > 0$. These relations will play a key role in our application of the KL property to the convergence analysis of our framework in Section~\ref{sec_applications}.

\begin{lemma}\label{lm-BR}
Let $\varrho$ and $\varepsilon$ be two nonnegative numbers.
Let $h: \mathbb{X} \rightarrow (-\infty, \infty]$ be a proper closed $\varrho$-weakly convex function and $x\in \mathbb{X}$.
If $\xi \in \partial_{\varepsilon}^{\varrho} h(x)$, then
there exists $z\in \mathbb{X}$
such that
\begin{align}
& \|x - z\| \le \varepsilon^{\frac{1}{2}}, \quad
\zeta \coloneqq  \xi +  (1 + \varrho) (x - z) \in \partial h(z), \label{eq-x-z-zeta}
\\ & |h(x) - h(z)| \le \varepsilon^{\frac{1}{2}}\|\xi\|
+ (1 + \tfrac{\varrho}{2})\varepsilon.
\label{eq-f-x-z}
\end{align}
\end{lemma}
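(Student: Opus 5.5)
The plan is to reduce the statement to the Br\o{}ndsted--Rockafellar-type relation in Theorem~\ref{th-BRL}, applied to the convex function $g\coloneqq h+\tfrac{\varrho}{2}\|\cdot\|^2$ with the particular choice $\gamma=1$. The relation \eqref{eq-x-z-zeta} should come out of the subdifferential shift in \eqref{eq-wc-subd}. The bound \eqref{eq-f-x-z} should then follow from two one-sided inequalities: one from the definition \eqref{eq-eps} of $\xi$, and one from the weakly convex subgradient inequality satisfied by $\zeta$.

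\emph{Step 1: construct $z$ and verify \eqref{eq-x-z-zeta}.} By \eqref{eq-eps}, $\xi\in\partial^{\varrho}_{\varepsilon}h(x)$ means $\xi+\varrho x\in\partial_\varepsilon g(x)$. Since $g$ is proper, closed and convex, Theorem~\ref{th-BRL} with $\gamma=1$ yields $y$ with $\|y\|\le\varepsilon^{1/2}$ and $\xi+\varrho x-y\in\partial g(x+y)$. Set $z\coloneqq x+y$, so that $\|x-z\|\le\varepsilon^{1/2}$. By \eqref{eq-wc-subd}, $\partial h(z)=\partial g(z)-\varrho z$. Hence
\[
\xi+\varrho x-y-\varrho(x+y)=\xi-(1+\varrho)y=\xi+(1+\varrho)(x-z)
\]
lies in $\partial h(z)$, which is exactly $\zeta$.

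\emph{Step 2: upper bound on $h(x)-h(z)$.} Plug $y=z$ into the inequality defining \eqref{eq-eps}:
\[
h(z)\ge h(x)+\langle\xi,z-x\rangle-\tfrac{\varrho}{2}\|z-x\|^2-\varepsilon.
\]
Using Cauchy--Schwarz together with $\|z-x\|\le\varepsilon^{1/2}$ gives
\[
h(x)-h(z)\le\varepsilon^{1/2}\|\xi\|+(1+\tfrac{\varrho}{2})\varepsilon.
\]
This step uses that $x\in\dom h$, which is implicit in $\partial^\varrho_\varepsilon h(x)\neq\emptyset$.

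\emph{Step 3: upper bound on $h(z)-h(x)$.} Since $\zeta\in\partial h(z)$, \eqref{eq-wc-subd} gives
\[
h(x)\ge h(z)+\langle\zeta,x-z\rangle-\tfrac{\varrho}{2}\|x-z\|^2.
\]
Expanding $\langle\zeta,x-z\rangle=\langle\xi,x-z\rangle+(1+\varrho)\|x-z\|^2$, we get
\[
h(z)-h(x)\le-\langle\xi,x-z\rangle-\bigl(1+\tfrac{\varrho}{2}\bigr)\|x-z\|^2\le\varepsilon^{1/2}\|\xi\|.
\]
Combining this with Step 2 gives \eqref{eq-f-x-z}.

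The only delicate point is the choice $\gamma=1$ in Step 1. It makes the deviation $\zeta-\xi$ a fixed multiple $(1+\varrho)$ of $x-z$, and it keeps the coefficient of $\varepsilon^{1/2}$ in \eqref{eq-f-x-z} equal to $\|\xi\|$ with no additive constant. Beyond that choice, the argument is routine bookkeeping.
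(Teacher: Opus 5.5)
Your proof is correct and follows the paper's argument essentially step for step. Both apply Theorem~\ref{th-BRL} with $\gamma=1$ to $h+\tfrac{\varrho}{2}\|\cdot\|^2$, shift via \eqref{eq-wc-subd} to get $\zeta\in\partial h(z)$, and then combine the two one-sided bounds coming from \eqref{eq-eps} at $y=z$ and from the subgradient inequality for $\zeta$.

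Your closing remark overstates how delicate $\gamma=1$ is. A general $\gamma>0$ still gives a fixed multiple, $\zeta=\xi+(\gamma^{-2}+\varrho)(x-z)$, together with $\|x-z\|\le\gamma\varepsilon^{1/2}$ and an $\varepsilon^{1/2}$-coefficient $\gamma\|\xi\|$ with no additive constant. So $\gamma=1$ simply yields the constants as stated.
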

\begin{proof}
From $\xi \in \partial_{\varepsilon}^{\varrho} h(x)$, we deduce from \eqref{eq-eps} that
$\xi + \varrho x \in \partial_{\varepsilon} (h + \tfrac{\varrho}{2}\|\cdot\|^2)(x)$.
Then, by Theorem~\ref{th-BRL} with $\gamma=1$, there is a unique $y\in \mathbb{X}$ such that
\begin{equation*}
\xi + \varrho x -  y \in \partial (h + \tfrac{\varrho}{2}\|\cdot\|^2)(x +  y)
\quad \mbox{and} \quad 
\|y\| \le \sqrt{\varepsilon}.
\end{equation*}

Now, let $z \coloneqq x +  y$ and $\zeta \coloneqq  \xi +  (1 + \varrho) (x - z)$.
From the last display, we have
\begin{equation*}
\|z - x\| = \|y\| \le \sqrt{\varepsilon}
\quad \mbox{and} \quad \zeta + \varrho z = \xi + \varrho x -  y
\in \partial (h + \tfrac{\varrho}{2}\|\cdot\|^2)(z),
\end{equation*}	
which, together with \eqref{eq-wc-subd}, implies \eqref{eq-x-z-zeta}.

Next, we turn to \eqref{eq-f-x-z}.
Since $\zeta \in \partial h(z)$, we have
\begin{align*}
h(x) - h(z) & \ge \langle \zeta, x - z\rangle
- \tfrac{\varrho}{2} \|x - z\|^2
= \langle \xi, x - z\rangle + (1 + \tfrac{\varrho}{2}) \|x - z\|^2
 \ge -\|\xi\| \|x - z\|.
\end{align*}
On the other hand, from $\xi \in \partial_{\varepsilon}^{\varrho}h(x)$, it follows that
\begin{align*}
h(z) - h(x) & \ge \langle \xi, z-x\rangle 
- \tfrac{\varrho}{2}\|z-x\|^2 - \varepsilon
\ge - \|\xi\| \|z-x\|
- \tfrac{\varrho}{2}\|z-x\|^2 - \varepsilon.
\end{align*}
Combining the last two displays with the first relation in \eqref{eq-x-z-zeta}, we obtain \eqref{eq-f-x-z}.
\end{proof}

The following result is a slight extension of \cite[Lemma 4.2]{26XuPongZhang}, where we replace the $r_1 \in (0, 1)$ therein by $r_1 \in [0, 1)$. One can check that the proof of \cite[Lemma 4.2]{26XuPongZhang} remains valid for the case $r_1 = 0$. For brevity, we do not reproduce the proof here.

\begin{lemma}[{\cite[Lemma 4.2]{26XuPongZhang}}]
\label{lm-rate}
Let $r_1 \in [0, 1)$, $r_2 > r_1$, $p\ge 1$, $b_0 > 0$, $b_1 > 0$ and $N_1 \in \mathbb{N}_0$.
Let $\{\Delta_{k}\}$ be a nonnegative sequence such that
\begin{equation*}
\Delta_{k+1} + b_0 (1+k)^{-r_1} \Delta_{k+1}^{p} 
\le \Delta_{k} + b_1 (1+k)^{-r_2} \quad \forall k \ge N_1.
\end{equation*}
Then there exist $b_2> 0$ and $N_2\in \mathbb{N}_0$ such that
\begin{equation*}
\Delta_{k} \le b_2 (1+k)^{-s} \quad \forall k\ge N_2,
\end{equation*}
where
\begin{equation*}
s \coloneqq \begin{cases}
r_2 - r_1 & \mbox{if}\ p = 1,
\\ \min\left\{ \frac{r_2 - r_1}{p}, \frac{1-r_1}{p - 1} \right\}
& \mbox{if}\ p > 1.
\end{cases}
\end{equation*}
\end{lemma}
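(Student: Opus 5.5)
The plan is to prove the bound by induction on $k$, comparing $\Delta_k$ with the candidate envelope $u_k\coloneqq c\,(1+k)^{-s}$ for a suitably large constant $c>0$. The base case is handled by the choice of $c$: taking $N_2$ large enough (at least $N_1$) and $c\ge \Delta_{N_2}(1+N_2)^{s}$ gives $\Delta_{N_2}\le u_{N_2}$. For the inductive step, assume $\Delta_k\le u_k$ with $k\ge N_2$. The map $t\mapsto t+b_0(1+k)^{-r_1}t^p$ is strictly increasing on $[0,\infty)$. So if we had $\Delta_{k+1}>u_{k+1}$, the hypothesis would give
\begin{equation*}
u_{k+1}+b_0(1+k)^{-r_1}u_{k+1}^p< \Delta_k+b_1(1+k)^{-r_2}\le u_k+b_1(1+k)^{-r_2}.
\end{equation*}
The task is to choose $c$ and $N_2$ so that this strict inequality is impossible.

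For the increment $u_k-u_{k+1}$ I would use the mean value theorem, giving $u_k-u_{k+1}\le c\,s\,(1+k)^{-s-1}$. I would also use $u_{k+1}\ge 2^{-s}c(1+k)^{-s}$. The contradiction then reduces to showing
\begin{equation*}
b_0(1+k)^{-r_1}u_{k+1}^p \ge c\,s\,(1+k)^{-s-1}+b_1(1+k)^{-r_2}.
\end{equation*}

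\emph{Case $p>1$.} The left-hand side is at least $b_0 2^{-ps}c^p(1+k)^{-r_1-ps}$. By the definition of $s$ we have $ps\le r_2-r_1$ and $(p-1)s\le 1-r_1$. These are equivalent to $r_1+ps\le r_2$ and $r_1+ps\le s+1$. Hence both terms on the right decay at least as fast as $(1+k)^{-r_1-ps}$, and it suffices to have
\begin{equation*}
b_0 2^{-ps}c^p\ge c\,s+b_1.
\end{equation*}
Since $p>1$, this holds for all sufficiently large $c$. Together with the base-case requirement, this fixes $c$, and the induction closes with $b_2=c$.

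\emph{Case $p=1$.} Here $s=r_2-r_1>0$, and the comparison is linear in $c$, so it must be handled more carefully. The requirement becomes
\begin{equation*}
b_0 2^{-s}c(1+k)^{-r_1-s}\ge c\,s\,(1+k)^{-s-1}+b_1(1+k)^{-r_2}.
\end{equation*}
Since $r_1+s=r_2$, this reads $b_0 2^{-s}c \ge c\,s\,(1+k)^{r_2-s-1}+b_1$, where $r_2-s-1=r_1-1<0$ (this is where $r_1<1$ is used, including $r_1=0$). I would therefore first choose $N_2\ge N_1$ so large that $s(1+k)^{r_1-1}\le \tfrac12 b_0 2^{-s}$ for all $k\ge N_2$. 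Then I would choose $c\ge \max\{2^{s+1}b_1/b_0,\ \Delta_{N_2}(1+N_2)^{s}\}$.

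The main obstacle is the exponent bookkeeping, namely checking that the two constraints defining $s$ are exactly what makes every error term decay no slower than the gain term. In the case $p=1$ there is the additional subtlety that, because the comparison is linear in $c$, one must fix $N_2$ before $c$. No a priori boundedness of $\{\Delta_k\}$ is needed, since the base case absorbs $\Delta_{N_2}$ into the choice of $c$.
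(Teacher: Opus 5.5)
Your proof is correct and complete. The monotonicity of $t\mapsto t+b_0(1+k)^{-r_1}t^p$ correctly turns the recursion into a comparison with the envelope $c(1+k)^{-s}$. Both the mean value bound $u_k-u_{k+1}\le cs(1+k)^{-s-1}$ and the estimate $(2+k)^{-s}\ge 2^{-s}(1+k)^{-s}$ are right. For $p>1$, the inequalities $r_1+ps\le r_2$ and $r_1+ps\le 1+s$ are exactly the two constraints defining $s$. The resulting condition $b_0 2^{-ps}c^p\ge cs+b_1$ does not depend on $k$, so you can even take $N_2=N_1$ there. For $p=1$, you are right that the comparison is linear in $c$, so $N_2$ must be fixed before $c$.

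The paper gives no proof of this lemma to compare against. It cites \cite[Lemma 4.2]{26XuPongZhang} and only asserts that the argument there remains valid when $r_1\in(0,1)$ is replaced by $r_1\in[0,1)$. Your self-contained argument verifies that unproved assertion directly. It shows that $r_1$ enters only through $r_1<1$: this makes $s>0$ when $p>1$, and makes $(1+k)^{r_1-1}\to 0$ when $p=1$. The condition $r_1\ge 0$ is never used, so the case $r_1=0$ costs nothing.
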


\section{An $\varepsilon$-subdifferential algorithmic framework for weakly convex functions}
\label{sec_applications}
In this section, we first present our abstract algorithmic framework and its global convergence result under KL property: the convergence proof will be presented later in Section~\ref{sec4}. We then describe how our framework can be readily applied to analyzing global sequential convergence and convergence rate of some widely studied algorithms whose global sequential convergence was either previously unknown or required a more complicated potential function in the analysis.

We now present our algorithmic framework below.
Let $\psi:\mathbb{X}\to (-\infty, \infty]$ be a proper, closed, 
and $\varrho$-weakly convex function with constant $\varrho\ge 0$, $\{\alpha_{k}\}$ be a sequence of positive numbers and $\{\beta_{k}\}$, $\{\nu_k\}$ and $\{\gamma_{k}\}$ be sequences of nonnegative numbers. 
Let $\{x^{k}\} \subseteq \mathbb{X}$ satisfy the following conditions:
\begin{enumerate}[{\rm (C1)}]
\item \label{C1}
There exist a constant $a_1>0$ and a sequence $\{\hat{\psi}_k\}\subset \mathbb{R}$ such that
for every $k\ge 0$,
\begin{align}
\label{eq-f-beta-des}
&\frac{a_{1}}{\alpha_k}  \| x^{k+1} - x^{k} \|^2 \le \hat{\psi}_k - \hat{\psi}_{k+1},
\\ & |\psi(x^{k}) - \hat{\psi}_k| \le \beta_{k}.
\label{eq-f-psi}
\end{align}

\item \label{C2} 
There exist constants $a_2 \ge 0$, $a_{3} \ge 0$ and $M_{\psi} > 0$ such that, for each $k\ge 0$, one can find $\xi^{k+1} \in \partial_{\widetilde{\nu}_k}^{\varrho} \psi(x^{k+1})$ satisfying
\begin{equation}\label{eq-C2}
\|\xi^{k+1}\| \le \frac{a_{3}}{\alpha_{k}} \|x^{k+1} - x^{k}\| + \gamma_{k}
\quad \mbox{and} \quad \|\xi^{k+1}\| \le M_{\psi},
\end{equation}
where
\begin{equation}\label{eq-nu-tilde}
\widetilde{\nu}_k \coloneqq a_2(\hat{\psi}_k - \hat{\psi}_{k+1})
+ \nu_k.
\end{equation}

\item \label{C3} 
The sequence $\{x^{k}\}$ is bounded.
Moreover, for every convergent subsequence $\{x^{k_j}\}$ with $\lim_{j\to\infty} x^{k_j} = \bar{x}$ for some $\bar x$, it holds that
\begin{equation}\label{eq-continuity}
\psi(\bar{x})  \in \mathbb{R} \ \ \ {\rm and}\ \ \ 
\lim_{j\to\infty}\psi(x^{k_j}) = \psi(\bar{x}).
\end{equation}
\end{enumerate} 

\begin{remark}[Discussions on the conditions (C1)-(C3)] 
Our conditions (C1)-(C3) extend the standard conditions (H1)-(H3) in the celebrated papers \cite{14BolteSabachTeboulle,13AttouchBolteSvaiter,10AttouchBolteRedontSoubeyran}, which are commonly used in KL-based convergence analysis. Indeed, if there exist $0<\underline{\alpha} < \overline{\alpha}$ such that $\underline{\alpha} \le \alpha_k \le \overline{\alpha}$ for all $k$, $\beta_k \equiv 0$, $a_2=0$, $\nu_k \equiv 0$ and $\gamma_k \equiv 0$, then our conditions (C1)-(C2) reduce to (H1)-(H2); while (C3) and (H3) are essentially the same. We next further comment on (C1) and (C2):
\begin{itemize}
\item Condition (C1) is an approximate descent condition. It requires a surrogate sequence ${\widehat{\psi}_k}$ for the function values ${\psi(x^k)}$ to satisfy a controlled descent inequality involving the successive changes $\|x^{k+1}-x^k\|$, where the amount of the descent is captured by the dynamic parameter $\alpha_k$. Unlike the classical setting, the parameter $\alpha_k$ is allowed to vary with the iteration and need not be bounded away from zero.
\item Condition (C2) is a generalized relative-error condition that permits iteration-dependent models and controls an appropriate approximate stationarity measure for nonsmooth, weakly convex functions. Our use of $\varrho$-weakly-convex-$\varepsilon$-subdifferential of $\psi$ follows the recent works \cite{24AAS,Robinson99,ASSS23}, which focused on metric subregularity and local linear convergence of their methods.
\end{itemize}
Together, (C1)-(C2) quantify the effects of model variation, inexactness, and other algorithm-dependent approximations.
With the flexibility brought by the auxiliary sequence $\{\hat\psi_k\}$ and the use of $\varrho$-weakly-convex-$\varepsilon$-subdifferential, our framework differs in a fundamental way from existing extensions of KL-based analysis; see, e.g., \cite{YKZH25,24QMLM,23QianPan,23LMQ,23HuLiu,19Ochs,21Sun,15FGP,26Tao}. 
\end{remark}


\begin{theorem}[Global convergence and convergence rate under KL property]\label{th-main}
Let $\psi:\mathbb{X}\to (-\infty, \infty]$ be a proper, closed, 
and $\varrho$-weakly convex function with $\varrho\ge 0$.
Let $\{\alpha_k\}$ be a positive sequence and $\{\beta_k\}$, $\{\gamma_k\}$ and $\{\nu_k\}$ be nonnegative sequences satisfying either {\rm (A\ref{(A)})} or {\rm (A\ref{(B)})} below.
\begin{enumerate}[{\rm ({A}1)}]
    \item \label{(A)} $\alpha_{k} = \Theta(1), \beta_k = \mathcal{O}(q_{\beta}^{k}), 
\gamma_k = \mathcal{O}(q_{\gamma}^{k})$, $\nu_k = \mathcal{O}(q_{\nu}^{k})$ 
for some $q_{\beta}, q_{\gamma}, q_{\nu} \in (0, 1)$;
\item \label{(B)} $\alpha_{k} = \Theta(k^{-r_{\alpha}})$, 
$\beta_k = \mathcal{O}(k^{-r_{\beta}})$,
$\gamma_k = \mathcal{O}(k^{-r_{\gamma}})$,
and $\nu_k = \mathcal{O}(k^{-r_{\nu}})$, with constants $r_{\alpha} \in [0, 1)$ and $r_{\beta}, r_{\gamma}, r_{\nu}>0$. Moreover, it holds that
$r_{\alpha} + r_{\widetilde{\tau}} - 1 > 0$ for some $\widetilde{\tau}\ge 2$, where 
$$r_{\tau}\coloneqq \min\left\{r_{\gamma}, \tfrac{\tau - 1}{\tau} r_{\beta}, \tfrac{1}{2} r_{\nu} \right\}
\quad \forall \tau \ge 2.$$ 
\end{enumerate}
Let $\{x^{k}\}$ satisfy {\rm (C\ref{C1})}, {\rm (C\ref{C2})} and {\rm (C\ref{C3})}.
Suppose that $\psi$ is a KL function. 
Then the following statements hold.
\begin{enumerate}[{\rm (i)}]
    \item $\{x^{k}\}$ converges to a critical point $x^*$ of $\psi$, 
and 
$\sum_{k=0}^{\infty}\|x^{k+1} - x^{k}\| < + \infty$.

\item If $\psi$ is a KL function with exponent $\theta \in (0, \frac{1}{2}]$ and condition {\rm (A\ref{(A)})} holds, then there exists $q \in (0, 1)$ such that
\begin{equation}\label{rate-main-R}
	\| x^{k}  - x^*\| = \mathcal{O}(q^{k}).
\end{equation}

\item If $\psi$ is a KL function with exponent $\theta \in (0, \frac{1}{2}]$
and {\rm (A\ref{(B)})} holds with $\widetilde{\tau}=2$, then
\begin{equation}\label{rate-main1}
	\| x^{k}  - x^*\| = \mathcal{O}(k^{- \widetilde{r}})
	\quad \text{with} 
	\quad \widetilde{r} \coloneqq  r_{\alpha} + \min\left\{r_{\gamma}, \tfrac{1}{2} r_{\beta}, \tfrac{1}{2} r_{\nu} \right\} - 1.
\end{equation}

\item If $\psi$ is a KL function with exponent $\theta \in (\frac{1}{2}, 1)$ and {\rm (A\ref{(B)})} holds, then for any $\tau\in [\widetilde{\tau}, \infty)$,
\begin{equation}\label{rate-main2}
	\| x^{k}  - x^*\| = \mathcal{O}(k^{-\widetilde{r}_{\tau}}),
\end{equation}
where
\begin{equation*}
	\widetilde{r}_{\tau} \coloneqq \min\left\{r_{\alpha} + r_{\tau} - 1, \frac{(1-\theta_{\tau})r_{\tau}}{\theta_{\tau}}, \frac{(1-\theta_{\tau})(1- r_{\alpha})}{2\theta_{\tau} - 1} \right\}
    \quad \text{and} \quad \theta_{\tau} \coloneqq \max\left\{\theta, \frac{\tau - 1}{\tau}\right\}.
\end{equation*}
\end{enumerate}
\end{theorem}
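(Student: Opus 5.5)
The plan is to run the KL argument on the auxiliary function $\Psi_\tau(x,t)=\psi(x)+|t|^\tau$ from \eqref{eq-F-tau-def}. Its second coordinate absorbs the errors $\beta_k,\gamma_k,\nu_k$. The exact subdifferential of $\psi$ is reached through Lemma~\ref{lm-BR}.

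\textbf{Step 1: basic consequences of (C1)--(C3).} By \eqref{eq-f-psi}, \eqref{eq-f-beta-des} and boundedness of $\{x^k\}$ together with (C3), $\{\hat\psi_k\}$ is nonincreasing and bounded below, so it converges to some $\psi^*$. Hence $\sum_k \alpha_k^{-1}\|x^{k+1}-x^k\|^2<\infty$. Since $\beta_k\to0$, also $\psi(x^k)\to\psi^*$.

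\textbf{Step 2: passing to exact subgradients.} Apply Lemma~\ref{lm-BR} to $\xi^{k+1}\in\partial^\varrho_{\widetilde\nu_k}\psi(x^{k+1})$. This gives points $z^{k+1}$ with $\|x^{k+1}-z^{k+1}\|\le\widetilde\nu_k^{1/2}$ and $\zeta^{k+1}\in\partial\psi(z^{k+1})$. By \eqref{eq-C2},
\[
\|\zeta^{k+1}\|\le \tfrac{a_3}{\alpha_k}\|x^{k+1}-x^k\|+\gamma_k+(1+\varrho)\widetilde\nu_k^{1/2},
\]
and by \eqref{eq-f-x-z} together with $\|\xi^{k+1}\|\le M_\psi$,
\[
|\psi(z^{k+1})-\psi(x^{k+1})|\le M_\psi\widetilde\nu_k^{1/2}+(1+\tfrac{\varrho}{2})\widetilde\nu_k .
\]
Since $\widetilde\nu_k\to0$, the sequences $\{x^k\}$ and $\{z^k\}$ share their cluster set $\Omega$, which is compact and connected. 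On $\Omega$, $\psi\equiv\psi^*$ by (C3), and $\Omega$ consists of critical points by closedness of $\partial\psi$. By Corollary~\ref{cor-kl}, $\Psi_\tau$ is KL at every point of $\Omega\times\{0\}$, so Lemma~\ref{lm-KL-unif} applies to $\Psi_\tau$ on $\Omega\times\{0\}$.

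\textbf{Step 3: the descent--KL inequality.} Choose a sequence $t_k\ge0$ so that $E_k:=\Psi_\tau(z^{k+1},t_k)-\psi^*\ge \hat\psi_{k+1}-\psi^*+(\text{tail}_k)\ge0$. Concretely, $t_k^\tau$ dominates the sum of the future errors $\beta$, $\widetilde\nu^{1/2}$ and $\widetilde\nu$. The point of this choice is that $E_k$ is essentially nonincreasing, with
\[
E_k-E_{k+1}\gtrsim a_1\alpha_k^{-1}\|x^{k+1}-x^k\|^2 .
\]
This is where the summability rates in (A1)/(A2) enter: each tail must be summable with a controlled size. Next,
\[
\dist\bigl(0,\partial\Psi_\tau(z^{k+1},t_k)\bigr)\le\|\zeta^{k+1}\|+\tau t_k^{\tau-1}.
\]
Using $\varphi$ concave, I would derive
\[
\|x^{k+1}-x^k\|^2\le C\alpha_k\bigl(\varphi(E_k)-\varphi(E_{k+1})\bigr)\Bigl(\tfrac{1}{\alpha_{k-1}}\|x^k-x^{k-1}\|+e_k\Bigr),
\]
where $e_k=\gamma_k+\widetilde\nu_k^{1/2}+t_k^{\tau-1}$. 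The AM--GM inequality then gives
\[
\|x^{k+1}-x^k\|\le \tfrac{\alpha_k}{2\alpha_{k-1}}\|x^k-x^{k-1}\|+C'\alpha_k(\ldots)+\tfrac{1}{2}\alpha_k e_k\cdot(\text{something}).
\]

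\textbf{Main obstacle.} The main obstacle is that $\alpha_k/\alpha_{k-1}$ is not bounded away from $1$ in case (A2), and $\alpha_k\to0$. Instead I would multiply by $\alpha_k^{-1}$ and work with $d_k:=\alpha_{k-1}^{-1}\|x^k-x^{k-1}\|$, or equivalently sum weighted terms. Summability of $\|x^{k+1}-x^k\|$ then hinges on $\sum_k \alpha_k e_k<\infty$. With $\alpha_k\sim k^{-r_\alpha}$ and $e_k\sim k^{-r_{\tilde\tau}}$, this is exactly the condition $r_\alpha+r_{\tilde\tau}>1$. The exponent $(\tau-1)/\tau$ on $r_\beta$ comes from $t_k^{\tau-1}$ where $t_k^\tau\sim\beta_k$, and the factor $\tfrac12$ on $r_\nu$ comes from $\widetilde\nu^{1/2}$. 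Since $\varphi(E_k)$ telescopes, finite length follows, hence (i).

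\textbf{Rates.} For (ii)--(iv), let $\Delta_k:=\sum_{i\ge k}\|x^{i+1}-x^i\|\ge\|x^k-x^*\|$. Near $x^*$, with exponent $\theta$, the function $\Psi_\tau$ has exponent $\theta_\tau=\max\{\theta,(\tau-1)/\tau\}$, since $|t|^\tau$ has exponent $(\tau-1)/\tau$. Take $\varphi(s)=\kappa s^{1-\theta_\tau}$. Express $E_k^{\theta_\tau}$ through the subgradient bound, which is controlled by $\Delta_{k-1}-\Delta_k$ and $e_k$, to obtain a recursion of the form
\[
\Delta_{k+1}+b_0(1+k)^{-r_1}\Delta_{k+1}^{p}\le\Delta_k+b_1(1+k)^{-r_2}.
\]
Here $p=\theta_\tau/(1-\theta_\tau)$, $r_1$ is tied to $r_\alpha$, and $r_2$ is tied to $r_\alpha+r_\tau$. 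In case (A1) with $\theta\le\frac12$ one gets $p=1$ and geometric errors, giving linear convergence. Otherwise, Lemma~\ref{lm-rate} yields the stated exponents $\widetilde r$ and $\widetilde r_\tau$.
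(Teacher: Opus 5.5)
\textbf{There is a genuine gap in Step 3.} It lies in the monotone quantity $E_k=\Psi_\tau(z^{k+1},t_k)-\psi^*$, with $t_k^\tau$ chosen to dominate the \emph{tails} of future errors. These tails need not be finite under the theorem's hypotheses. Condition (A2) only gives $\sum_k\alpha_k(\gamma_k+\beta_{k+1}^{(\tau-1)/\tau}+\nu_k^{1/2})<\infty$ with $\alpha_k\to0$, so $\sum_k\beta_k$ and $\sum_k\nu_k^{1/2}$ may diverge. In the quadratic penalty and variable smoothing applications, for instance, $\beta_k,\nu_k\sim k^{-r}$ with $r<1$.

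Moreover, $\widetilde\nu_k$ contains the endogenous term $a_2(\hat\psi_k-\hat\psi_{k+1})$. Summability of its square roots is essentially part of what is to be proved, so $t_k$ can be $+\infty$. Your later bookkeeping ``$t_k^\tau\sim\beta_k$'', which is what produces the exponent $\frac{\tau-1}{\tau}r_\beta$, contradicts this construction. With $t_k^\tau\sim\beta_k$ the quantity $E_k$ is no longer monotone, and the concavity/telescoping step $\varphi(E_k)-\varphi(E_{k+1})\ge\varphi'(E_k)(E_k-E_{k+1})$ breaks down.

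\textbf{The missing idea is that you do not need a monotone quantity built from $\Psi_\tau$ at all.} Condition (C1) already makes $h_k=\hat\psi_k-\overline{\psi}$ nonincreasing, with $h_k-h_{k+1}\ge\frac{a_1}{\alpha_k}\|x^{k+1}-x^k\|^2$.
\begin{itemize}
\item Apply the uniformized KL inequality of $\Psi_\tau$ at $(z^k,s_k^{1/\tau})$, where $s_k=\widetilde\nu_{k-1}^{1/2}\|\xi^k\|+(1+\frac{\varrho}{2})\widetilde\nu_{k-1}+\beta_k$ is only the \emph{current} bound on $|\hat\psi_k-\psi(z^k)|$.
\item Then $\Psi_\tau(z^k,s_k^{1/\tau})-\overline{\psi}\ge h_k>0$. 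Since $\varphi'$ is nonincreasing, $\varphi'(h_k)\,\dist(0,\partial\Psi_\tau(z^k,s_k^{1/\tau}))\ge1$.
\item The errors now enter only through $\tau s_k^{(\tau-1)/\tau}$. Young's inequality bounds this by a multiple of $\|\xi^k\|+\widetilde\nu_{k-1}^{1/2}+\beta_k^{(\tau-1)/\tau}$.
\item The piece $\sqrt{a_2(\hat\psi_{k-1}-\hat\psi_k)}$ is absorbed by measuring steps with $\Delta_l=\|x^{l+1}-x^l\|+\alpha_l(\hat\psi_l-\hat\psi_{l+1})^{1/2}$, which the descent also controls.
\item This yields $2\Delta_k\le\Delta_{k-1}+c_2(\varphi(h_k)-\varphi(h_{k+1}))+\alpha_{k-1}A_{k-1}$, using only $\alpha_k\le M_\alpha\alpha_{k-1}$.
\end{itemize}
So the ratio $\alpha_k/\alpha_{k-1}$ is not an obstacle. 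Passing to $d_k=\alpha_{k-1}^{-1}\|x^k-x^{k-1}\|$, which need not be summable, would be counterproductive. The rate recursion is then run on $\sigma_k=\Delta_{k-1}+c_2\varphi(h_k)+\sum_{j\ge k}\alpha_{j-1}A_{j-1}$, whose tail is finite by (A1)/(A2).

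\textbf{A secondary problem is in Step 2.} The claim that every cluster point is critical ``by closedness'' fails under (A2). When $\alpha_k\to0$, the quantity $\alpha_k^{-1}\|x^{k+1}-x^k\|$ need not tend to $0$, so you do not get $\zeta^k\to0$. What you do get, and what suffices for KL, is $\Omega\subseteq\dom\partial\psi$, from $\|\xi^{k+1}\|\le M_\psi$ and $\widetilde\nu_k\to0$. Criticality of the limit requires $\sum_k\alpha_k=\infty$, which gives $\liminf_k\|\xi^{k+1}\|=0$.
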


The proof of Theorem~\ref{th-main} will be presented in Section~\ref{sec4} below. For the remainder of this section, we illustrate the usefulness of our framework by showing that it covers several classical optimization methods under suitably chosen parameters $\{\alpha_{k}\}$, $\{\beta_{k}\}$, $\{\nu_k\}$ and $\{\gamma_{k}\}$; this allows us to establish their global sequential convergence and derive their convergence rates as corollaries of Theorem~\ref{th-main}. Moreover, we will see that the potential function $\psi$ can be chosen naturally as either the original objective function or the classical exact penalty function (in the presence of explicit equality / inequality constraints).



\subsection{Application I: Bundle and proximal model-based methods}\label{sec3.1}
Bundle methods were originally developed for nonsmooth convex optimization. The basic idea is to construct a tractable cutting-plane model from accumulated first-order information every iteration and stabilize the model subproblem by a proximal term; see, e.g., \cite{06Kiwiel,Kelley60}. These methods have been extended to handle some nonsmooth nonconvex optimization problems through various proximal and bundle-type constructions; see, e.g., \cite{04FGG,09HS,10HS,16HSS} and references therein.

Recently, Atenas et al.~\cite{ASSS23} developed a unified convergence analysis for a broad class of descent schemes for weakly convex optimization, encompassing, in particular, proximal model-based and bundle methods.
Here, we show that the class of bundle-type methods described in \cite[Section~5.2]{ASSS23} fits in our algorithmic framework described in (C1), (C2) and (C3). To describe the method, we first recall the following definition, which is taken from \cite[Definition~5.1]{ASSS23}. 

\begin{definition}\label{def-1QA-plus}
    A convex function $\varphi_x:\mathbb{R}^n \to \mathbb{R}$ is said to model a proper function 
    $\psi: \mathbb{R}^n \to (-\infty, \infty]$ with {\em one-sided} quadratic accuracy (1QA) at $x\in \dom \psi$ with parameter $\ell > 0$ 
    if 
    \begin{equation}\label{1QA}
        \varphi_x(y) \le \psi(y) - \psi(x) + \frac{\ell}{2}\|y - x\|^2 
        \quad \forall y \in \mathbb{R}^n.
    \end{equation}
    We call $\varphi_x$ a 1QA model of $\psi$ at $x$ with parameter $\ell$.
\end{definition}

In this subsection, we consider the bundle-type algorithmic framework presented in Algorithm~\ref{alg-prox-model}, which is based on 1QA models and is taken from \cite[Eq.~(5.2)]{ASSS23}.\footnote{Note that the boundedness condition on $\{\alpha_k\}$, though not explicitly included in \cite[Eq.~(5.2)]{ASSS23}, was assumed in the main convergence result \cite[Theorem~5.7]{ASSS23}. For succinctness of presentation, we include this condition as an assumption on algorithmic parameters.} We refer the readers to \cite[Section~5.2]{ASSS23} for discussions of the bundle-type methods covered by this algorithmic framework and efficient routines for finding $x^{k+1}$. Notice that, by induction, one can show that $x^k\in {\rm dom}\,\psi$ for all $k\ge 0$.

\begin{algorithm}[h]
\caption{A descent proximal model-based algorithmic scheme}
\label{alg-prox-model}
\begin{algorithmic}[0]
\setlength{\columnwidth}{\linewidth}
\State {\bf Require.} $x^0\in \dom\psi$, $\ell>0$, $c\in (0,1)$,
and $\{\alpha_k\} \subset [\underline \alpha, \bar \alpha]$ for some $\bar{\alpha}\ge \underline{\alpha} >0$.
\For{$k=0,1,\ldots$} 
\State Construct a 1QA model $\varphi_{x^k}$ of $\psi$ with parameter $\ell$ at $x^k$ such that 
\begin{align}
  \psi(\widetilde x) - \psi(x^k) \le c \, \varphi_{x^k}(\widetilde x), \label{model-base-2}
\end{align}
\State where 
\begin{align}\label{model-base-1}
    \widetilde x \coloneqq \argmin_{y\in \mathbb{R}^n} \bigg\{ \varphi_{x^k}(y) + \frac{1}{2\alpha_k}\| y - x^{k}\|^2\bigg\}.
\end{align}
\State Set $x^{k+1} = \widetilde x$.
\EndFor
\end{algorithmic}
\end{algorithm}

Linear convergence of Algorithm~\ref{alg-prox-model} was established in \cite[Theorem~5.7]{ASSS23} under metric subregularity and an isocost condition for a weakly convex function $\psi$, assuming in addition that $\inf \psi > -\infty$ and $\{x^k\}$ is bounded. 
Here, we apply our convergence analysis framework to derive the convergence of $\{x^k\}$ when $\psi$ is a level-bounded weakly convex KL function, and discuss the explicit local convergence rate according to the KL exponent. Our result can be seen as an extension of \cite[Theorem~5.7]{ASSS23} because 
the assumptions there imply that $\psi$ satisfies the KL property with exponent $1/2$ (see, e.g., \cite[Proposition 3.8]{21DrusvyatskiyIoffeLewis}).

We first present some basic properties of $\{x^k\}$ generated by Algorithm~\ref{alg-prox-model} in the next lemma.
Note that the property of the $\varrho$-w.c.-$\varepsilon$-subdifferential analogous to that in Lemma~\ref{lm-prox-bundle}(iii) below was previously established in \cite[Proposition~5.4]{ASSS23}.

\begin{lemma}\label{lm-prox-bundle}
    Let $\psi: \mathbb{R}^n \to (-\infty, \infty]$ be a proper closed level-bounded $\varrho$-weakly convex function with constant $\varrho\ge 0$. 
    Let $\{x^k\}$ be generated by Algorithm~\ref{alg-prox-model} and we denote the 1QA model with parameter $\ell > 0$ associated with each $x^k$ by $\varphi_{x^k}$. 
    Then, the following statements hold.
    \begin{enumerate}[{\rm (i)}]
       \item It holds that
        \begin{align}\label{des-model-base}
           \frac{c}{\alpha_k}\|x^{k+1} - x^k\|^2 \le \psi(x^{k}) - \psi(x^{k+1})
           \quad \forall k\ge 0.
        \end{align}

 \item For each $k\ge 0$, it holds that $(x^k-x^{k+1})/\alpha_k \in \partial \varphi_{x^k}(x^{k+1})$.
        \item For each $k\ge 0$, it holds that $(x^k-x^{k+1})/\alpha_k \in \partial_{\widetilde{\nu}_k}^{\widetilde{\varrho}} \psi (x^{k+1})$, 
        where 
        \begin{equation}
           \widetilde{\varrho} \coloneqq \max\{2 \ell, \varrho\}
           \quad \text{and} \quad
           \widetilde{\nu}_k \coloneqq (c^{-1}(1+ \ell \bar \alpha) - 1)(\psi(x^{k}) - \psi(x^{k+1})) \ge 0.
              \label{nu-varrho}
        \end{equation}
        \item The sequences $\{x^k\}$ and $\{(x^k-x^{k+1})/\alpha_k\}$ are both bounded. 
        Moreover, for any convergent subsequence $\{x^{k_j}\}$ with $\lim_{j\to\infty} x^{k_j} = \bar x$ for some $\bar x$, it holds that
        $$\lim_{j\to\infty} \psi(x^{k_j}) = \psi(\bar x) \in \mathbb{R}.$$
    \end{enumerate}
\end{lemma}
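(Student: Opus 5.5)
The four parts go in order. Each uses only the strong convexity of the subproblem \eqref{model-base-1}, the 1QA inequality \eqref{1QA}, and the acceptance test \eqref{model-base-2}. Throughout, write $d^k\coloneqq x^{k+1}-x^k$ and $g^k\coloneqq -d^k/\alpha_k$.

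For (ii), note that $\varphi_{x^k}$ is a finite-valued convex function. The objective in \eqref{model-base-1} is therefore convex and $\alpha_k^{-1}$-strongly convex, and its subdifferential obeys the convex sum rule. The optimality condition at $x^{k+1}$ then reads $0\in\partial\varphi_{x^k}(x^{k+1})+d^k/\alpha_k$, which is exactly (ii).

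For (i), take $y=x^k$ in \eqref{1QA}; this gives $\varphi_{x^k}(x^k)\le 0$. Strong convexity at the minimizer, tested with $y=x^k$, gives
\[
\varphi_{x^k}(x^k)+\tfrac{1}{2\alpha_k}\|d^k\|^2\ge \varphi_{x^k}(x^{k+1})+\tfrac{1}{2\alpha_k}\|d^k\|^2+\tfrac{1}{2\alpha_k}\|d^k\|^2 ,
\]
so $\varphi_{x^k}(x^{k+1})\le-\|d^k\|^2/\alpha_k$. Substituting this into \eqref{model-base-2} gives \eqref{des-model-base}.

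For (iii), fix $y$ and chain three inequalities. First, (ii) gives $\varphi_{x^k}(y)\ge\varphi_{x^k}(x^{k+1})+\langle g^k,y-x^{k+1}\rangle$. Second, \eqref{1QA} gives $\psi(y)\ge\psi(x^k)+\varphi_{x^k}(y)-\frac{\ell}{2}\|y-x^k\|^2$. Third, we use the elementary bound $\|y-x^k\|^2\le 2\|y-x^{k+1}\|^2+2\|d^k\|^2$.

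Combining them, the quadratic term becomes $-\ell\|y-x^{k+1}\|^2\ge-\frac{\widetilde\varrho}{2}\|y-x^{k+1}\|^2$. Two leftover terms remain, and each is bounded using $\Delta_k\coloneqq\psi(x^k)-\psi(x^{k+1})\ge0$:
\begin{itemize}
\item By \eqref{model-base-2}, $\psi(x^k)+\varphi_{x^k}(x^{k+1})\ge\psi(x^{k+1})-(c^{-1}-1)\Delta_k$.
\item By \eqref{des-model-base} and $\alpha_k\le\bar\alpha$, $\ell\|d^k\|^2\le\ell\bar\alpha\,\|d^k\|^2/\alpha_k\le \ell\bar\alpha c^{-1}\Delta_k$.
\end{itemize}
Adding these two bounds gives the error $(c^{-1}(1+\ell\bar\alpha)-1)\Delta_k=\widetilde\nu_k$. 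This quantity is nonnegative because $c<1$ and $\Delta_k\ge0$. By \eqref{eq-eps}, this is exactly the inclusion claimed in (iii).

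For (iv), \eqref{des-model-base} shows that $\{\psi(x^k)\}$ is nonincreasing, so $\{x^k\}$ stays in the level set $\{\psi\le\psi(x^0)\}$. That set is compact because $\psi$ is level-bounded and closed. Hence $\{x^k\}$ is bounded, and $\psi$ is bounded below on the set, so $\psi(x^k)\downarrow\psi^*\in\mathbb{R}$ and $\Delta_k\to0$. Boundedness of $\{g^k\}$ follows from \eqref{des-model-base}: $\|d^k\|^2/\alpha_k^2\le (c\,\underline\alpha)^{-1}(\psi(x^0)-\psi^*)$.

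The main obstacle is the continuity claim, since lower semicontinuity only gives $\psi(\bar x)\le\psi^*$. For the reverse inequality, apply (iii) at index $k_j-1$ (discarding $k_j=0$) with $y=\bar x$:
\[
\psi(\bar x)\ge\psi(x^{k_j})+\langle g^{k_j-1},\bar x-x^{k_j}\rangle-\tfrac{\widetilde\varrho}{2}\|\bar x-x^{k_j}\|^2-\widetilde\nu_{k_j-1}.
\]
As $j\to\infty$, the inner product and the quadratic term vanish because $g^{k_j-1}$ is bounded and $x^{k_j}\to\bar x$. The term $\widetilde\nu_{k_j-1}\to0$ because $\Delta_k\to0$. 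This yields $\psi(\bar x)\ge\psi^*$, hence $\lim_j\psi(x^{k_j})=\psi(\bar x)\in\mathbb{R}$.
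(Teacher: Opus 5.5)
Your proposal is correct and follows the paper's argument essentially step for step. The only variations are cosmetic: you bound $\ell\|x^{k+1}-x^k\|^2$ directly via (i), and you bound $\{(x^k-x^{k+1})/\alpha_k\}$ via the descent inequality rather than via boundedness of the iterates.

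One slip to fix is in your strong-convexity display for (i). The left-hand side should be $\varphi_{x^k}(x^k)$ alone, since $\|x^k-x^k\|=0$. As written, the display yields only $\varphi_{x^k}(x^{k+1})\le-\|x^{k+1}-x^k\|^2/(2\alpha_k)$. That would halve the constant in \eqref{des-model-base} and change $\widetilde\nu_k$ in (iii), although the conclusion you then state is the correct one.
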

\begin{proof}
First, we consider item (i). 
Since the objective of \eqref{model-base-1} is $\frac{1}{\alpha_k}$-strongly convex, we have
\begin{align}\label{varphi-dx}
\varphi_{x^k}(x^{k+1}) + \frac{1}{2\alpha_k}\|x^{k+1} - x^k\|^2 
&\le \varphi_{x^{k}}(x^k) - \frac{1}{2\alpha_k} \|x^{k+1} - x^k\|^2\notag\\
&\le - \frac{1}{2\alpha_k} \|x^{k+1} - x^k\|^2,
\end{align}
where the second inequality holds by \eqref{1QA}.
This, together with \eqref{model-base-2}, implies \eqref{des-model-base}.

Item (ii) follows directly from the first-order optimality condition of \eqref{model-base-1}.
We now turn to the proof of item (iii). 
   As $c<1$, we have $c < 1+\ell \bar{\alpha}$, and so, \eqref{des-model-base} implies that $\widetilde{\nu}_k\ge 0.$ 
Moreover, from \eqref{1QA}, for any $x\in \mathbb{R}^n$,
    \begin{align*}
        & \psi(x)  \ge \psi(x^{k}) + \varphi_{x^k}( x) - \frac{\ell}{2} \| x - x^{k} \|^2
        \\ & \overset{\text{\rm{(a)}}}{\ge} \psi(x^{k}) + \varphi_{x^k}( x^{k+1}) + \langle \xi^{k+1}, x - x^{k+1}\rangle 
         - \ell \| x - x^{k+1} \|^2 - \ell \| x^{k+1} - x^k\|^2
         \\ & \overset{\text{\rm{(b)}}}{\ge} \psi(x^{k})  + \langle \xi^{k+1}, x - x^{k+1}\rangle 
         - \ell \| x - x^{k+1} \|^2 + (1+ \ell \alpha_k) \varphi_{x^k}( x^{k+1})
         \\ & \overset{\text{\rm{(c)}}}{\ge} \psi(x^{k})  + \langle \xi^{k+1}, x - x^{k+1}\rangle 
         - \ell \| x - x^{k+1} \|^2 + (1+ \ell \alpha_k)c^{-1} (\psi(x^{k+1}) - \psi(x^{k}) )
        \\ & \overset{\text{\rm{(d)}}}{\ge} \psi(x^{k+1}) + \langle \xi^{k+1}, x - x^{k+1}\rangle - \frac{\widetilde{\varrho}}{2}\| x - x^{k+1} \|^2 
        - (c^{-1}(1+ \ell \bar \alpha) - 1) (\psi(x^{k}) - \psi(x^{k+1}) ),
    \end{align*}
    where (a) follows from the inequality $\|u+v\|^2\le 2\|u\|^2+2\|v\|^2$ for $u,v\in \mathbb{R}^n$ 
    and the convexity of $\varphi_{x^k}$ with $\xi^{k+1}\coloneqq (x^k-x^{k+1})/\alpha_k \in \partial \varphi_{x^k}(x^{k+1})$ (see item (ii)), 
    (b) follows from \eqref{varphi-dx}, 
    (c) follows from \eqref{model-base-2},
    and (d) holds because $\ell\le \widetilde{\varrho}/2$, $\alpha_k\le \bar{\alpha}$ 
    and $\psi(x^{k}) - \psi(x^{k+1})\ge 0$ (see \eqref{des-model-base}).
    Item (iii) follows immediately from the above display.
    
    Next, we consider item (iv). 
    By assumption, the level set $\Omega_0 \coloneqq \{ x : \psi(x) \le \psi(x^0)\}$ is bounded. 
    Then in view of item (i), we see that there exists $M>0$ such that $\|x^k\|\le M$ for all $k\ge 0$.
    Hence, for every $k\ge 0$, we have
    \begin{equation}\label{boundedness}
    \alpha_k^{-1} \| x^{k+1} - x^k\| 
    \le \underline \alpha^{-1}(\| x^{k+1}\| + \|x^k\|) 
    \le 2 M / \underline \alpha.
    \end{equation}
    
    Let $\{x^{k_j}\}$ be a subsequence satisfying $\lim_{j\to\infty} x^{k_j} = \bar x$. 
    Since $\psi$ is closed, we have $\liminf_{j\to \infty} \psi(x^{k_j}) \ge \psi(\bar x)$.
    By the monotonicity of $\{\psi(x^k)\}$ (see item (i)), the properness of $\psi$ and the fact that $x^0 \in \dom \psi$, 
    we have $\psi(\bar x)\in \mathbb{R}$. 
    Next, from item (iii) and the definition of $\varrho$-weakly-convex-$\varepsilon$-subdifferential (see \eqref{eq-eps}), we have
    \[
    \psi(\bar x)\ge \psi(x^{k_j}) + \langle (x^{k_j-1} - x^{k_j})/\alpha_{k_j-1},\bar x - x^{k_j}\rangle - \frac{\widetilde\varrho}2\|\bar x - x^{k_j}\|^2 - \widetilde{\nu}_{k_j-1}.
    \]
    Passing to the limit as $j\to \infty$ on both sides of the above display, and invoking \eqref{boundedness} and noting that $\widetilde{\nu}_{k}\to 0$ (thanks to the level-boundedness of $\psi$, item (i), and the definition of $\widetilde{\nu}_k$ in \eqref{nu-varrho}), 
    we obtain $\limsup_{j\to \infty} \psi(x^{k_j}) \le \psi(\bar x)$. 
    Therefore, $\lim_{j\to \infty} \psi(x^{k_j}) = \psi(\bar x) \in \mathbb{R}$. 
\end{proof}

We are now ready to establish the convergence of Algorithm~\ref{alg-prox-model} by showing that the $\{x^k\}$ generated satisfies (C1), (C2) and (C3) with suitable parameters, and invoking Theorem~\ref{th-main}. To the best of our knowledge, this is the first global sequential convergence and convergence rate results based on the KL property for this class of methods.

\begin{theorem}[Global convergence of Algorithm~\ref{alg-prox-model}]
 Let $\psi: \mathbb{R}^n \to (-\infty, \infty]$ be a proper closed $\varrho$-weakly convex function 
 with constant $\varrho\ge 0$.
Suppose further that $\psi$ is a level-bounded KL function.
Let $\{x^k\}$ be generated by Algorithm~\ref{alg-prox-model}.
Then the following statements hold.
\begin{enumerate}[{\rm (i)}]
    \item $\{x^{k}\}$ converges to a critical point $x^*$ of $\psi$ and  
    $\sum_{k=0}^{\infty}\|x^{k+1} - x^{k}\| < + \infty$.
    \item If $\psi$ is a KL function with exponent $\theta \in (0, \frac{1}{2}]$, then there exists $q \in (0, 1)$ such that
\[
\| x^{k}  - x^*\| = \mathcal{O}(q^{k}).
\]

\item If $\psi$ is a KL function with exponent $\theta \in (\frac{1}{2}, 1)$, then
$\| x^{k} - x^*\| = \mathcal{O}(k^{-\frac{1 - \theta}{2 \theta - 1}})$.
\end{enumerate}
\end{theorem}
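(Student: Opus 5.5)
The plan is to apply Theorem~\ref{th-main} to $\psi$ itself, with the iterates $\{x^k\}$ of Algorithm~\ref{alg-prox-model}, and weak convexity modulus $\widetilde\varrho=\max\{2\ell,\varrho\}$ instead of $\varrho$. This substitution is harmless. Since $\widetilde\varrho\ge\varrho$, $\psi+\frac{\widetilde\varrho}{2}\|\cdot\|^2$ is convex, so $\psi$ is $\widetilde\varrho$-weakly convex. The limiting subdifferential, and hence the notions of critical point, KL property and KL exponent, do not depend on the modulus. All the work is therefore in verifying (C1)--(C3) from Lemma~\ref{lm-prox-bundle}, with the error sequences identically zero.

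\textbf{Verifying (C1)--(C3).}
\begin{itemize}
\item For (C1), take $\hat\psi_k\coloneqq\psi(x^k)$, which is real since $x^k\in\dom\psi$. Take $a_1\coloneqq c$ and $\beta_k\equiv 0$. Then \eqref{eq-f-beta-des} is exactly \eqref{des-model-base} in Lemma~\ref{lm-prox-bundle}(i), and \eqref{eq-f-psi} holds trivially.
\item For (C2), take $\xi^{k+1}\coloneqq(x^k-x^{k+1})/\alpha_k$, $a_3\coloneqq 1$, $\gamma_k\equiv0$, $\nu_k\equiv 0$ and $a_2\coloneqq c^{-1}(1+\ell\bar\alpha)-1\ge 0$. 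Then \eqref{eq-nu-tilde} coincides with the $\widetilde\nu_k$ in \eqref{nu-varrho}. Lemma~\ref{lm-prox-bundle}(iii) gives $\xi^{k+1}\in\partial^{\widetilde\varrho}_{\widetilde\nu_k}\psi(x^{k+1})$, and the first inequality in \eqref{eq-C2} holds with equality. The uniform bound $M_\psi$ comes from the boundedness of $\{(x^k-x^{k+1})/\alpha_k\}$ in Lemma~\ref{lm-prox-bundle}(iv), for instance $M_\psi=2M/\underline\alpha$ from \eqref{boundedness}, enlarged slightly if needed to make it positive.
\item For (C3), both the boundedness of $\{x^k\}$ and the continuity property \eqref{eq-continuity} along convergent subsequences are exactly Lemma~\ref{lm-prox-bundle}(iv).
\end{itemize}

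\textbf{Items (i) and (ii).} Since $\alpha_k\in[\underline\alpha,\bar\alpha]$, we have $\alpha_k=\Theta(1)$. The sequences $\beta_k,\gamma_k,\nu_k$ are zero, so they are $\mathcal O(q^k)$ for any $q\in(0,1)$, and (A\ref{(A)}) holds. Theorem~\ref{th-main}(i) then gives item (i), and Theorem~\ref{th-main}(ii) gives item (ii).

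\textbf{Item (iii).} This is the only step needing a little care: we must choose the parameters in (A\ref{(B)}) so that the rate in Theorem~\ref{th-main}(iv) reduces to $\frac{1-\theta}{2\theta-1}$. Since $\alpha_k=\Theta(1)=\Theta(k^{0})$, take $r_\alpha=0$. Because $\beta_k,\gamma_k,\nu_k$ vanish, they are $\mathcal O(k^{-r})$ for every $r>0$, so we may take $r_\beta=r_\gamma=r_\nu=R$ with $R$ arbitrarily large. Take $\widetilde\tau=\tau=2$. Then $r_2=\min\{R,R/2,R/2\}=R/2$, so $r_\alpha+r_2-1=R/2-1>0$ once $R>2$, and (A\ref{(B)}) holds. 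Moreover $\theta_2=\max\{\theta,\tfrac12\}=\theta$ because $\theta>\tfrac12$. Hence
\[
\widetilde r_2=\min\Bigl\{\tfrac R2-1,\ \tfrac{(1-\theta)R}{2\theta},\ \tfrac{1-\theta}{2\theta-1}\Bigr\}.
\]
For $R$ large enough the first two terms exceed the third, so $\widetilde r_2=\frac{1-\theta}{2\theta-1}$. Theorem~\ref{th-main}(iv) then yields $\|x^k-x^*\|=\mathcal O(k^{-\frac{1-\theta}{2\theta-1}})$.
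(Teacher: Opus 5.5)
Your proposal is correct and follows the paper's proof essentially verbatim. You make the same choices: $\hat\psi_k=\psi(x^k)$, $\beta_k=\gamma_k=\nu_k=0$ and $\xi^{k+1}=(x^k-x^{k+1})/\alpha_k$ with modulus $\widetilde\varrho$; (C1)--(C3) via Lemma~\ref{lm-prox-bundle}; (A1) for items (i)--(ii); and (A2) with $r_\alpha=0$, $\widetilde\tau=\tau=2$ and arbitrarily large decay exponents for item (iii), where your explicit computation of $\widetilde r_2$ just spells out the paper's ``choosing $r$ sufficiently large'' step.
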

\begin{proof}
Since $\widetilde \varrho\ge \varrho$, $\psi$ is also $\widetilde \varrho$-weakly convex, 
where $\widetilde{\varrho}$ is given in \eqref{nu-varrho}.
Let $\hat{\psi}_k \coloneqq \psi(x^{k})$, 
$\beta_k \coloneqq 0$, $\gamma_k \coloneqq 0$, 
$\nu_k \coloneqq 0$, and $\xi^{k+1} \coloneqq (x^k - x^{k+1})/\alpha_k$ for all $k\ge 0$.
Then, the sequence $\{x^k\}$ satisfies {\rm (C\ref{C1})} by Lemma~\ref{lm-prox-bundle}(i), 
satisfies {\rm (C\ref{C2})} by Lemma~\ref{lm-prox-bundle}(iii)-(iv), 
and satisfies {\rm (C\ref{C3})} by Lemma~\ref{lm-prox-bundle}(iv).

Clearly, condition {\rm (A\ref{(A)})} in Theorem~\ref{th-main} holds.
Thus, items (i) and (ii) follow from Theorem~\ref{th-main}(i) and (ii), respectively.

Finally, we consider item (iii). 
Suppose that $\psi$ is a KL function with an exponent $\theta \in (\frac{1}{2}, 1)$.
Note that $\alpha_{k} = \Theta(k^{-r_{\alpha}})$ with $r_{\alpha}= 0$, 
and that $\beta_k = \mathcal{O}(k^{-r})$, $\gamma_k = \mathcal{O}(k^{-r})$, 
and $\nu_k =\mathcal{O}(k^{-r})$, for any $r\in (0, \infty)$.
For $r > 2$, it is straightforward to verify that {\rm (A\ref{(B)})} in Theorem~\ref{th-main} holds with $\widetilde{\tau} = 2$.
Therefore, the conclusion of Theorem~\ref{th-main}(iv) holds. 
By choosing $r$ sufficiently large and setting $\tau = 2$ in \eqref{rate-main2}, 
we then obtain item (iii).
\end{proof}

\subsection{Application II: Quadratic (smooth) penalty method}\label{sec3.2}
In this section, we consider the following equality constrained minimization problem:
\begin{align}\label{opt-NLP-eq}
\notag		& \min_{x\in \mathbb{R}^{n}} \ \ f(x) + \phi(x) + \delta_{\mathcal{C}}(x)
\\ & \ \ \mbox{s.t.} \quad G(x) = 0,
\end{align}
where $\delta_{\cal C}$ is the indicator function of the set ${\cal C}$, and  $\mathcal{C}$, $f$, $\phi$ and $G$ satisfy the following standard assumption; see, e.g., the recent work \cite{26LMX} and references therein for similar assumptions. 
\begin{assumption}\label{ass-ec}
In \eqref{opt-NLP-eq},
\begin{enumerate}
[{\rm (i)}]
\item the set $\mathcal{C} \subseteq \mathbb{R}^n$ is closed and convex, 
and the feasible set $\mathcal{F}\coloneqq \{ x \in \mathcal{C} :  G(x) = 0 \}\neq \emptyset$;

\item the function $\phi:\mathbb{R}^{n} \to \mathbb{R}$ is convex and 
$M_{\phi}$-Lipschitz continuous on some open set $\mathcal U$ containing $\mathcal{C}$ with 
constant $M_{\phi} > 0$; 

\item the function $f:\mathbb{R}^{n}\to\mathbb{R}$ is continuously differentiable, 
and there exist constants $M_f>0$ and $L_f>0$ such that
\begin{equation}\label{assume_Lipf}
    \|\nabla f(x)\| \le M_f \quad \forall x\in \mathcal{C}
    \quad \text{and} \quad
    \| \nabla f(x) - \nabla f(y)\| \le L_f \| x - y \|
    \quad \forall x,y\in \mathcal{C};
\end{equation}

\item  the mapping $G:\mathbb{R}^{n} \to \mathbb{R}^{m}$ is 
continuously differentiable, and there exist constants $M_G>0$ and $L_G>0$ such that
\begin{equation*}
        \|D G(x)\| \le M_G \quad  \forall x\in \mathcal{C}
    \quad \text{and} \quad
    \| D G(x) - D G(y)\| \le L_G \| x - y \|
    \quad \forall x,y\in \mathcal{C};
\end{equation*}
\item (constraint qualification) there exists $\widetilde{\kappa}>0$ such that
    \begin{equation}\label{eq-eb-ec}
        \dist(0, D G(x)^{*} G(x) + \partial\delta_{\cal C}(x) ) \ge \widetilde{\kappa} \| G(x) \|
        \quad \forall x \in \mathcal{C};
    \end{equation}
\item (level boundedness) there exists $\rho_0 > 0$ such that $Q_{\rho_0}$ is level-bounded,
where $Q_{\rho}:\mathbb{R}^n \to (-\infty, \infty]$ is the standard quadratic penalty function defined by
\begin{align}\label{eq-Q-rho}
& Q_{\rho}(x) \coloneqq f(x) + \phi(x) + \delta_{\mathcal C}(x) + \frac{\rho}{2} \| G(x) \|^2 \quad \forall x \in \mathbb{R}^n,
\end{align}
with $\rho>0$ being a penalty parameter.
\end{enumerate}
\end{assumption}

\begin{remark}
    Assumption~\ref{ass-ec}(v) plays a pivotal role in the design of algorithms that can generate nearly feasible solutions to problem \eqref{opt-NLP-eq}, and is commonly adopted in the literature; see, e.g., \cite{19SALC,22LMX,24AlacaogluWright,24LCLLX,26LMX}.
\end{remark}

\begin{remark}
Let \begin{equation}\label{eq-Lh-pen}
h(x) \coloneqq \frac{1}{2} \| G(x) \|^2 
\quad \text{and} \quad 
L(x) \coloneqq M_{G}^2 + L_G \|G(x)\|.
\end{equation}
 Under Assumption~\ref{ass-ec}, it can be directly verified that, for all $x,y\in \mathcal{C}$,
\begin{align}\label{des-lm-h}
     & \| \nabla h(y) - \nabla h(x) \| \le L(x) \| y - x\|,
    \\ & h(y) \le h(x) + \langle \nabla h(x), y - x\rangle + \frac{L(x)}{2}\|y-x\|^2.
    \label{Lips-Dh}
\end{align}
\end{remark}
Now, we present a prototypical quadratic penalty proximal gradient method for solving \eqref{opt-NLP-eq} as Algorithm~\ref{alg-qpen-pg}. The formula for updating $x^{k+1}$ in \eqref{opt-sub-ec} is obtained by applying one step of the proximal gradient method (with stepsize $\alpha_k > 0$) to the standard quadratic penalty function (see \eqref{eq-Q-rho}) for \eqref{opt-NLP-eq}. The next lemma collects some useful properties of the sequence $\{x^k\}$ generated by Algorithm~\ref{alg-qpen-pg}.
\begin{algorithm}[h]
\caption{Quadratic penalty proximal gradient method for \eqref{opt-NLP-eq}} 
\label{alg-qpen-pg}
\begin{algorithmic}[0]
\setlength{\columnwidth}{\linewidth}
\State {\bf Require.} $x^0 \in \mathcal{C}$, the parameters $\rho_0$, $L_f$, $M_{G}$, $L_G$ from Assumption~\ref{ass-ec}, and $\{\rho_{k}\}_{k\ge 1}\subset \mathbb{R}_{++}$.
\For{$k=0,1,\ldots$} 
\State Let $\alpha_{k} = 1/(L_f + \rho_k L_k)$, where $L_k\coloneqq M_{G}^2 + L_G \|G(x^{k})\| $.
\State Compute
\begin{equation}
    \label{opt-sub-ec}
    x^{k+1} = \argmin_{x \in \mathbb{R}^n}
    \bigg\{
    \phi(x) + \delta_{\mathcal{C}}(x) 
    + \langle d^k, x \rangle 
    + \frac{1}{2\alpha_k}\| x - x^{k} \|^2\bigg\},
\end{equation}
\State where $d^{k} \coloneqq  \nabla f(x^{k}) + \rho_{k} DG(x^{k})^{*} G(x^{k})$.
\EndFor
\end{algorithmic}
\end{algorithm}

\begin{lemma}\label{lm-bs-ec}
Consider \eqref{opt-NLP-eq} and suppose that Assumption~\ref{ass-ec} holds. 
Let $Q_{\rho}$ be given in \eqref{eq-Q-rho}.
Then Algorithm~\ref{alg-qpen-pg} is well-defined and generates a sequence $\{x^{k}\} \subseteq \mathcal{C}$.
Moreover, the following statements hold for each $k\ge 0$.
\begin{enumerate}[{\rm (i)}]
\item There exist $\zeta^{k+1}_{\phi} \in \partial \phi(x^{k+1})$ and $\zeta^{k+1}_{\mathcal{C}} \in \partial\delta_{\cal C}(x^{k+1})$ such that
\begin{align}\label{eq-crit-ec}
0 & = \nabla f(x^{k}) + \rho_{k} DG(x^{k})^{*} G(x^k) + \frac{1}{ \alpha_k}( x^{k+1} - x^{k}) + \zeta^{k+1}_{\phi} + \zeta^{k+1}_{\mathcal{C}}.
\end{align}

\item It holds that 
\begin{align}\label{eq-Q-des}
    & Q_{\rho_{k+1}}(x^{k+1}) \le Q_{\rho_k}(x^{k}) - \frac{1}{2\alpha_k} \| x^{k+1} - x^k \|^2 
    + \frac{\rho_{k+1} - \rho_k}{2} \| G(x^{k+1})\|^2.
\end{align}

\item It holds that
\begin{align}\label{eq-xi-ub-ec1}
& \|\xi^{k+1}\| \le \frac{2}{\alpha_k} \|x^{k+1} - x^{k}\| \le 2(M_f + M_{\phi} + M_G \rho_k \| G(x^{k})\|),
\end{align}
where $\xi^{k+1} \coloneqq \nabla f(x^{k+1}) + \rho_{k} DG(x^{k+1})^{*} G(x^{k+1}) + \zeta^{k+1}_{\phi} + \zeta^{k+1}_{\mathcal{C}}$, with $\zeta^{k+1}_\phi$ and $\zeta^{k+1}_{\cal C}$ given in \eqref{eq-crit-ec}.
\item  It holds that
\begin{align}
h(x^{k+1}) +  \frac{\rho_k \alpha_k}{32} \|\widetilde{\xi}^{k+1}\|^2
 \le  h(x^k) + \frac{17(M_f + M_{\phi})^2\alpha_k}{16\rho_k},
 \label{eq-h-des-1}
\end{align}
where $\widetilde{\xi}^{k+1} \coloneqq  DG(x^{k+1})^{*} G(x^{k+1}) + \rho_k^{-1} \zeta^{k+1}_{\mathcal{C}}$, $h$ is defined in \eqref{eq-Lh-pen}, and $\zeta^{k+1}_{\cal C}$ is given in \eqref{eq-crit-ec}.
\end{enumerate}
\end{lemma}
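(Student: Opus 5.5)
The plan is to prove the four items in order, getting everything from the first-order optimality condition of \eqref{opt-sub-ec} combined with descent lemmas. Throughout write $d \coloneqq x^{k+1}-x^k$ and $M\coloneqq M_f+M_\phi$.

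\emph{Well-definedness and (i).} The objective in \eqref{opt-sub-ec} is the sum of the real-valued convex function $\phi$, the indicator of the nonempty closed convex set $\mathcal{C}$, a linear term and a $\frac{1}{\alpha_k}$-strongly convex quadratic. Hence it has a unique minimizer, and this minimizer lies in $\mathcal{C}$. By induction from $x^0\in\mathcal{C}$, the whole sequence lies in $\mathcal{C}$. Since $\phi$ is finite and continuous, the convex subdifferential sum rule applies, and Fermat's rule gives \eqref{eq-crit-ec}. Two facts will be used repeatedly:
\begin{itemize}
\item[(a)] $\|\zeta^{k+1}_\phi\|\le M_\phi$, because $\phi$ is $M_\phi$-Lipschitz on the open set $\mathcal{U}\supseteq\mathcal{C}$;
\item[(b)] $\langle \zeta^{k+1}_{\mathcal{C}}, d\rangle\ge 0$, because $\zeta^{k+1}_{\mathcal{C}}$ lies in the normal cone to $\mathcal{C}$ at $x^{k+1}$ and $x^k\in\mathcal{C}$.
\end{itemize}

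\emph{(ii).} Add the descent lemma for $f$ (with constant $L_f$) to $\rho_k$ times \eqref{Lips-Dh} (with constant $L(x^k)=L_k$). Since $L_f+\rho_kL_k=1/\alpha_k$, this gives
\[
f(x^{k+1})+\rho_kh(x^{k+1})\le f(x^k)+\rho_kh(x^k)+\langle d^k,d\rangle+\tfrac{1}{2\alpha_k}\|d\|^2 .
\]
Next compare the subproblem objective at $x^{k+1}$ and at the feasible point $x^k$. By $\frac1{\alpha_k}$-strong convexity,
\[
\phi(x^{k+1})+\langle d^k,x^{k+1}\rangle+\tfrac1{2\alpha_k}\|d\|^2\le \phi(x^k)+\langle d^k,x^k\rangle-\tfrac1{2\alpha_k}\|d\|^2 .
\]
Adding the two inequalities yields $Q_{\rho_k}(x^{k+1})\le Q_{\rho_k}(x^k)-\frac1{2\alpha_k}\|d\|^2$. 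Finally, \eqref{eq-Q-des} follows from the identity $Q_{\rho_{k+1}}(x^{k+1})=Q_{\rho_k}(x^{k+1})+\frac{\rho_{k+1}-\rho_k}{2}\|G(x^{k+1})\|^2$.

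\emph{(iii).} Subtract \eqref{eq-crit-ec} from the definition of $\xi^{k+1}$:
\[
\xi^{k+1}=\bigl(\nabla f(x^{k+1})-\nabla f(x^k)\bigr)+\rho_k\bigl(\nabla h(x^{k+1})-\nabla h(x^k)\bigr)-\tfrac1{\alpha_k}d .
\]
By \eqref{assume_Lipf} and \eqref{des-lm-h}, the norm is at most $(L_f+\rho_kL_k)\|d\|+\|d\|/\alpha_k=2\|d\|/\alpha_k$.

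For the second inequality, take the inner product of \eqref{eq-crit-ec} with $d$. Using (b) to drop the $\zeta^{k+1}_{\mathcal{C}}$ term, we get
\[
\tfrac1{\alpha_k}\|d\|^2\le \|\nabla f(x^k)+\rho_k DG(x^k)^*G(x^k)+\zeta^{k+1}_\phi\|\,\|d\| .
\]
By (a) and the bounds in Assumption~\ref{ass-ec}, this gives $\|d\|/\alpha_k\le M_f+M_\phi+\rho_kM_G\|G(x^k)\|$.

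\emph{(iv).} This step needs the most bookkeeping to land on the constants $\frac1{32}$ and $\frac{17}{16}$. Divide \eqref{eq-crit-ec} by $\rho_k$ to express
\[
\nabla h(x^k)+\rho_k^{-1}\zeta^{k+1}_{\mathcal{C}}=-\tfrac{1}{\rho_k\alpha_k}d-\rho_k^{-1}\bigl(\nabla f(x^k)+\zeta^{k+1}_\phi\bigr).
\]

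\emph{Descent in $h$.} Apply \eqref{Lips-Dh} at $x^k$ and substitute this expression for $\nabla h(x^k)$. Use (b) to discard the $\zeta^{k+1}_{\mathcal{C}}$ term, and note $L_k\le 1/(\rho_k\alpha_k)$. Then
\[
h(x^{k+1})\le h(x^k)-\tfrac{1}{2\rho_k\alpha_k}\|d\|^2+\tfrac{M}{\rho_k}\|d\| .
\]
Young's inequality $\frac{M}{\rho_k}\|d\|\le \frac{1}{4\rho_k\alpha_k}\|d\|^2+\frac{M^2\alpha_k}{\rho_k}$ then gives
\[
h(x^{k+1})\le h(x^k)-\tfrac{1}{4\rho_k\alpha_k}\|d\|^2+\tfrac{M^2\alpha_k}{\rho_k}.
\]

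\emph{Bound on $\widetilde\xi^{k+1}$.} Write $\widetilde\xi^{k+1}=\bigl(\nabla h(x^{k+1})-\nabla h(x^k)\bigr)+\bigl(\nabla h(x^k)+\rho_k^{-1}\zeta^{k+1}_{\mathcal{C}}\bigr)$. Using \eqref{des-lm-h} for the first bracket, the expression above for the second, and $L_k\le 1/(\rho_k\alpha_k)$,
\[
\|\widetilde\xi^{k+1}\|\le \tfrac{2}{\rho_k\alpha_k}\|d\|+\tfrac{M}{\rho_k}.
\]
Squaring with $(a+b)^2\le 2a^2+2b^2$ gives $\frac{\rho_k\alpha_k}{32}\|\widetilde\xi^{k+1}\|^2\le \frac{1}{4\rho_k\alpha_k}\|d\|^2+\frac{M^2\alpha_k}{16\rho_k}$.

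Adding this to the descent inequality for $h$ cancels the $\|d\|^2$ terms and leaves the constant $\bigl(1+\frac1{16}\bigr)\frac{M^2\alpha_k}{\rho_k}$, which is exactly \eqref{eq-h-des-1}.
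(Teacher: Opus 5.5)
Your proof is correct and follows the paper's argument essentially step by step, including the same intermediate descent bound for $h$ and the same constants $\tfrac{1}{32}$ and $\tfrac{17}{16}$ in (iv). The only minor difference is in (iii) and (iv): you bound $\frac{1}{\alpha_k}\|x^{k+1}-x^k\|^2$ using the optimality condition \eqref{eq-crit-ec} together with $\langle \zeta^{k+1}_{\mathcal C}, x^{k+1}-x^k\rangle\ge 0$ and $\|\zeta^{k+1}_\phi\|\le M_\phi$, whereas the paper uses the strong-convexity comparison \eqref{eq-sub-des-ec} and the Lipschitz bound on $\phi(x^k)-\phi(x^{k+1})$; the two routes are interchangeable here.
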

\begin{proof}
For each $k=0,1,\ldots$, since the objective function in \eqref{opt-sub-ec} is proper, closed, and $\frac{1}{\alpha_k}$-strongly convex, 
it has a unique solution $x^{k+1} \in \mathcal{C}$. 
Hence, Algorithm~\ref{alg-qpen-pg} is well-defined, $\{x^{k}\} \subseteq \mathcal{C}$, 
and item (i) follows directly from the first-order optimality condition of the subproblem \eqref{opt-sub-ec}. 

Now, we consider item (ii).
Let $k\ge 0$.
From the $\frac{1}{\alpha_k}$-strong convexity of \eqref{opt-sub-ec}, it follows that
\begin{equation}\label{eq-sub-des-ec}
    \phi(x^{k+1})  + \langle d^k, x^{k+1} \rangle 
    + \frac{1}{2\alpha_k}\| x^{k+1} - x^{k} \|^2
    \le \phi(x^k)   + \langle d^k, x^k \rangle 
    - \frac{1}{2\alpha_k}\| x^{k+1} - x^{k} \|^2.
\end{equation}
Using the $L_f$-smoothness of $f$ on ${\cal C}$, \eqref{Lips-Dh} and the fact $L_f + \rho_k L_k = 1/\alpha_k$, we have
\begin{equation*}
    f(x^{k+1}) + \rho_k h(x^{k+1}) \le f(x^{k}) + \rho_k h(x^{k}) + \langle d^{k}, x^{k+1} - x^{k}\rangle + \frac{1}{2\alpha_k}\|x^{k+1} - x^{k}\|^2.
\end{equation*}
Summing the last two displays and rearranging terms, we obtain \eqref{eq-Q-des}.

Next, we turn to item (iii). From \eqref{eq-crit-ec} and the definition of $\xi^{k+1}$, 
it follows that
\begin{align*}
0 & = \nabla f(x^{k}) - \nabla f(x^{k+1}) + \rho_{k} (\nabla h(x^{k}) - \nabla h(x^{k+1}) ) + \frac{1}{ \alpha_k}( x^{k+1} - x^{k}) + \xi^{k+1},
\end{align*}
Using this, \eqref{assume_Lipf} and \eqref{des-lm-h}, we have
\begin{equation}\label{eq-xi-up-ec}
\| \xi^{k+1}\| \le ( L_f + \rho_k L_k + \tfrac{1}{\alpha_k}) \| x^{k+1} - x^{k} \| 
= \tfrac{2}{\alpha_k} \| x^{k+1} - x^{k} \|.
\end{equation}
On the other hand, from \eqref{eq-sub-des-ec}, it follows that
\begin{align*}
\frac{1}{\alpha_k}\| x^{k+1} - x^{k} \|^2
& \le \phi(x^k) -  \phi(x^{k+1})  + \langle d^k, x^k - x^{k+1}\rangle
\\ & \le (M_{\phi} + M_{f} + M_{G} \rho_k \| G(x^k) \|)  \| x^{k+1} - x^k\|,
\end{align*}
where the second inequality holds thanks to Assumption~\ref{ass-ec}(ii), (iii) and (iv) upon recalling that $d^k = \nabla f(x^k) + \rho_k DG(x^k)^* G(x^k)$.
The above display together with \eqref{eq-xi-up-ec} implies \eqref{eq-xi-ub-ec1}.

Finally, we consider item (iv). 
From \eqref{eq-sub-des-ec} and the definition of $d^k$, it follows that
\begin{align}
\notag \rho_k \langle  \nabla h(x^k), x^{k+1} - x^k \rangle  
& \le \phi(x^k)  - \phi(x^{k+1}) - \langle \nabla f(x^k), x^{k+1} - x^k \rangle
- \tfrac{1}{\alpha_k}\| x^{k+1} - x^{k} \|^2
\\ & \le (M_f + M_\phi) \| x^{k+1} - x^k\| - \tfrac{1}{\alpha_k}\| x^{k+1} - x^{k} \|^2 \notag
\\ & \le (M_f + M_\phi)^2\alpha_k - \frac{3}{4\alpha_k}\| x^{k+1} - x^{k} \|^2, \label{eq-hdes-ec}
\end{align}
where the second inequality holds thanks to Assumption~\ref{ass-ec}(ii) and (iii), 
and the third inequality holds because $ab \le \frac{\eta a^2}{2} + \frac{b^2}{2\eta}$
with $a = M_f + M_{\phi}$, $b = \|x^{k+1} - x^k\|$ and $\eta = 2\alpha_k$.
Now, using \eqref{Lips-Dh}, we have
\begin{align}
\notag    h(x^{k+1}) & \le h(x^k) + \langle \nabla h(x^{k}), x^{k+1} - x^{k} \rangle + \frac{L_k}{2} \| x^{k+1} - x^k \|^2
    \\ & \le  h(x^k) +  \frac{(M_f + M_\phi)^2\alpha_k}{\rho_k} 
    - \left( \frac{3}{4\alpha_k\rho_k} - \frac{L_k}{2}\right) \| x^{k+1} - x^k \|^2 \notag
        \\ & \le  h(x^k) +  \frac{(M_f + M_\phi)^2\alpha_k}{\rho_k} 
    - \frac{1}{4\alpha_k\rho_k}\| x^{k+1} - x^k \|^2, \label{eq-h-des-ec}
\end{align}
where the second inequality follows from \eqref{eq-hdes-ec}, 
and the third inequality holds because
\begin{equation}\label{eq-Lh-alpha-rho}
    \alpha_k \rho_k = \rho_k/(L_f + \rho_k L_k) \le 1/ L_k.
\end{equation}

On the other hand, using \eqref{eq-crit-ec} and the definition of $\widetilde{\xi}^{k+1}$, we have 
\begin{align}
     \notag 
    \| \widetilde{\xi}^{k+1}\| & = \left\| \nabla h(x^{k+1}) - \nabla h(x^{k})
    - \frac{1}{\rho_k}(\nabla f(x^k) + \zeta_{\phi}^{k+1}) - \frac{1}{\alpha_k\rho_k} (x^{k+1} - x^k) \right\| \notag
    \\ \notag & \le \| \nabla h(x^{k+1}) - \nabla h(x^{k})\| + \frac{M_f + M_{\phi}}{\rho_k} + \frac{1}{\alpha_k\rho_k} \| x^{k+1} - x^k\|
    \\ & \le \frac{M_f + M_{\phi}}{\rho_k} + \frac{2}{\alpha_k \rho_k} \| x^{k+1} - x^k\|, \notag
\end{align}
where the first inequality follows from Assumption~\ref{ass-ec}(ii) and (iii), 
and the second inequality holds thanks to \eqref{des-lm-h} and $L_k \le 1/(\alpha_k \rho_k)$ (see \eqref{eq-Lh-alpha-rho}).
This implies 
\begin{align}
 \notag\frac{\rho_k \alpha_k}{32} \|\widetilde{\xi}^{k+1}\|^2
 & \le \frac{\rho_k \alpha_k}{32} \left( \frac{2(M_f + M_{\phi})^2}{\rho_k^2} + \frac{8}{(\alpha_k \rho_k)^2} \| x^{k+1} - x^k\|^2\right)
 \\ & =  \frac{(M_f + M_{\phi})^2\alpha_k}{16\rho_k} + \frac{1}{4\alpha_k \rho_k} \| x^{k+1} - x^k\|^2.\notag
\end{align}
Summing the last display with \eqref{eq-h-des-ec}, 
we obtain \eqref{eq-h-des-1}.
\end{proof}	

Next, we turn to establishing some relations among the $\{x^k\}$ generated by Algorithm~\ref{alg-qpen-pg}, the quadratic penalty function $Q_{\rho}$ in \eqref{eq-Q-rho}
and the $\ell_1$-based penalty function $E_{\mu}$ defined below:
\begin{equation}\label{eq-E-rho}
E_{\mu}(x) \coloneqq f(x) + \phi(x) + \delta_{\mathcal C}(x) + \mu \|G(x)\| \quad \forall x \in \mathbb{R}^{n},
\end{equation}
where $\mu>0$ is a penalty parameter. 
Notice that, under Assumption~\ref{ass-ec}, for every $\mu>0$, $E_\mu$ is $(L_f+\mu L_G)$-weakly convex,
since the function $x\mapsto \|G(x)\|+\delta_{\cal C}(x)$ is $L_G$-weakly convex (see, e.g., \cite[Lemma~4.2]{19DrusvyatskiyPaquette}), 
$f$ is $L_f$-smooth on ${\cal C}$ and $\phi+\delta_{\mathcal C}$ is convex. This function will also be used in our main convergence theorem later.
\begin{lemma}\label{lm-E-Q}
Consider \eqref{opt-NLP-eq} and suppose that Assumption~\ref{ass-ec} holds.
Let $\{\rho_k\}$ be a nondecreasing positive sequence such that $\rho_k = \Theta(k^{r})$ and $\rho_{k+1}- \rho_k = \mathcal{O}(k^{-(1-r)})$ for some $r\in (\frac{1}{2}, 1)$.
Let $\{x^{k}\}$ be generated by Algorithm~\ref{alg-qpen-pg}.
Let $Q_{\rho}$ and $E_{\mu}$ be given in \eqref{eq-Q-rho} and \eqref{eq-E-rho}, respectively. 
Then the following statements hold.
\begin{enumerate}[{\rm (i)}]
\item $\alpha_k = \Theta(k^{-r})$, and there exists a constant $\widetilde{\mu}>0$ such that 
\begin{equation}\label{eq-wt-mu}
    \rho_k \max\{\|G(x^{k})\|, \|G(x^{k+1})\|\} \le \widetilde{\mu}
\quad \forall k\ge 0.
\end{equation}

\item The sequence $\{x^k\} \subseteq \mathcal{C}$ is bounded, and it holds that
\begin{align}\label{eq-Q-des-1}
& Q_{\rho_{k+1}}(x^{k+1}) \le Q_{\rho_k}(x^{k}) - \frac{1}{2\alpha_k} \| x^{k+1} - x^k \|^2 
+ \widetilde{\beta}_k 
\quad \forall k\ge 0,
\end{align}
where $\widetilde{\beta}_{k} \coloneqq \frac{1}{2} (\rho_{k+1} - \rho_k) \| G(x^{k+1})\|^2 
= \mathcal{O}(k^{-(1 + r)})$.

\item We have
\begin{equation}\label{eq-E-Q}
    \frac{\widetilde{\mu}}{2} \|G(x^k)\| \le E_{\widetilde\mu}(x^{k}) - 
Q_{\rho_k}(x^{k}) \le \frac{\widetilde\mu^2}{2\rho_{k}} \quad \forall k \ge 0,
\end{equation}
where $\widetilde{\mu}$ is given in \eqref{eq-wt-mu}.
\item For each $k\ge 0$, there exists $\xi^{k+1} \in \partial_{\nu_k}^{\widetilde{\varrho}} E_{\widetilde{\mu}}(x^{k+1})$ such that
\begin{equation}\label{eq-C2-ec}
\|\xi^{k+1}\| \le \frac{2}{\alpha_k} \|x^{k+1} - x^{k}\| \le 2(M_f + M_{\phi} + M_G\, \widetilde{\mu}),
\end{equation}
where $\widetilde{\mu}$ is given in \eqref{eq-wt-mu}, $\widetilde{\varrho} \coloneqq  L_f + \widetilde{\mu} L_G$ and 
$\nu_k \coloneqq \widetilde{\mu}^2/\rho_{k+1}$.
\end{enumerate}
\end{lemma}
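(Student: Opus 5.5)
The plan is to prove the four items in order. The crux is item (i): the bound \eqref{eq-wt-mu}, i.e.\ $\|G(x^k)\| = \mathcal{O}(\rho_k^{-1})$, is what makes the other items go through. For (i), first recall that $L_k \ge M_G^2$, so $\alpha_k \le 1/(\rho_k M_G^2) = \mathcal{O}(k^{-r})$. Next, drop the nonnegative term $\frac{\rho_k\alpha_k}{32}\|\widetilde\xi^{k+1}\|^2$ in \eqref{eq-h-des-1}. This gives $h(x^{k+1}) \le h(x^k) + C\alpha_k/\rho_k$ with $\alpha_k/\rho_k = \mathcal{O}(k^{-2r})$, which is summable since $r>\frac12$. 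Hence $\{h(x^k)\}$ is bounded, so $\{L_k\}$ is bounded, and therefore $\rho_k\alpha_k = \rho_k/(L_f+\rho_k L_k) \ge c_0>0$ for all large $k$. This yields $\alpha_k = \Theta(k^{-r})$.

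Still in (i), we then exploit the constraint qualification \eqref{eq-eb-ec}. Since $\zeta^{k+1}_{\mathcal C}\in\partial\delta_{\mathcal C}(x^{k+1})$ and the normal cone is a cone, we have $\rho_k^{-1}\zeta^{k+1}_{\mathcal C}\in\partial\delta_{\mathcal C}(x^{k+1})$. So $\|\widetilde\xi^{k+1}\|\ge\widetilde\kappa\|G(x^{k+1})\| = \widetilde\kappa\sqrt{2h(x^{k+1})}$. Plugging this into \eqref{eq-h-des-1} gives
\[
(1+b_0)\,h(x^{k+1}) \le h(x^k) + b_1 (1+k)^{-2r}
\]
for large $k$, with $b_0 = c_0\widetilde\kappa^2/16$. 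Lemma~\ref{lm-rate} with $r_1=0$, $p=1$, $r_2=2r$ then gives $h(x^k)=\mathcal{O}(k^{-2r})$. Thus $\|G(x^k)\|=\mathcal{O}(k^{-r})=\mathcal{O}(\rho_k^{-1})$, and the same bound holds with $k+1$ because $\rho_{k+1}/\rho_k$ is bounded. The finitely many early indices are absorbed into the constant $\widetilde\mu$. I expect this bootstrapping to be the main obstacle: a lower bound on $\rho_k\alpha_k$ is needed before the CQ-based contraction can be used, and that lower bound is obtained from the crude summability estimate above.

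For (ii), $\widetilde\beta_k = \frac12(\rho_{k+1}-\rho_k)\|G(x^{k+1})\|^2 = \mathcal{O}(k^{-(1-r)}k^{-2r}) = \mathcal{O}(k^{-(1+r)})$ by (i), and \eqref{eq-Q-des-1} is just \eqref{eq-Q-des}. Telescoping and using summability gives $Q_{\rho_k}(x^k)\le Q_{\rho_0}(x^0)+\sum_j\widetilde\beta_j<\infty$. Since $\rho_k\ge\rho_0$ we have $Q_{\rho_0}(x^k)\le Q_{\rho_k}(x^k)$, so level-boundedness of $Q_{\rho_0}$ (Assumption~\ref{ass-ec}(vi)) yields boundedness of $\{x^k\}$. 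Item (iii) is elementary. Set $t=\|G(x^k)\|$; then $E_{\widetilde\mu}(x^k)-Q_{\rho_k}(x^k) = \widetilde\mu t-\frac{\rho_k}2t^2$. The lower bound follows from $\rho_k t\le\widetilde\mu$, and the upper bound from maximizing the concave quadratic in $t$.

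For (iv), take $\xi^{k+1}$ from Lemma~\ref{lm-bs-ec}(iii). The bound \eqref{eq-C2-ec} follows from \eqref{eq-xi-ub-ec1} and \eqref{eq-wt-mu}. For the subdifferential inclusion, fix $y\in\mathcal C$ (the case $y\notin\mathcal C$ is trivial), write $x=x^{k+1}$, and set $w\coloneqq\rho_kG(x)/\widetilde\mu$, so that $\|w\|\le1$. Then
\[
\widetilde\mu\|G(y)\|\ge\widetilde\mu\langle w,G(y)\rangle\ge\rho_k\|G(x)\|^2+\langle\rho_kDG(x)^*G(x),y-x\rangle-\tfrac{\widetilde\mu L_G}{2}\|y-x\|^2,
\]
using the $L_G$-Lipschitz continuity of $DG$ on the convex set $\mathcal C$. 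Add to this the $L_f$-smoothness inequality for $f$ and the convexity of $\phi+\delta_{\mathcal C}$ (with subgradient $\zeta_\phi^{k+1}+\zeta_{\mathcal C}^{k+1}$). This gives the inequality \eqref{eq-eps} for $E_{\widetilde\mu}$ with modulus $\widetilde\varrho=L_f+\widetilde\mu L_G$ and error $\widetilde\mu\|G(x)\|-\rho_k\|G(x)\|^2\le\widetilde\mu^2/(4\rho_k)$. Finally, $\widetilde\mu^2/(4\rho_k)\le\widetilde\mu^2/\rho_{k+1}$ provided $\rho_{k+1}\le4\rho_k$. This holds for large $k$ since the increments are bounded and $\rho_k\to\infty$; for the finitely many remaining indices I would enlarge $\widetilde\mu$ if necessary, which is harmless for (i)–(iii).
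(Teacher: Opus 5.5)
Your items (i)--(iii) follow the paper's proof essentially verbatim. You first use the crude summability of $\alpha_k/\rho_k$ to bound $h(x^k)$, which gives $\rho_k\alpha_k=\Theta(1)$. The constraint qualification \eqref{eq-eb-ec} then turns \eqref{eq-h-des-1} into a contraction, and Lemma~\ref{lm-rate} with $p=1$, $r_1=0$, $r_2=2r$ finishes (i); (ii) and (iii) are identical to the paper.

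Item (iv) is where you genuinely differ. The paper writes $\rho_kDG(x^{k+1})^*G(x^{k+1})=\mu_{k+1}DG(x^{k+1})^*u^{k+1}$, where $\mu_{k+1}=\rho_k\|G(x^{k+1})\|\le\widetilde\mu$ and $u^{k+1}\in\partial\|\cdot\|(G(x^{k+1}))$. This gives $\xi^{k+1}\in\partial E_{\mu_{k+1}}(x^{k+1})$. The paper then invokes the $(L_f+\mu L_G)$-weak convexity of $E_\mu$ (from the Drusvyatskiy--Paquette lemma on $\|G(\cdot)\|+\delta_{\mathcal C}$) together with \eqref{eq-wc-subd}, and uses $E_{\mu_{k+1}}\le E_{\widetilde\mu}$.

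You instead verify the inequality in \eqref{eq-eps} by hand. You bound $\widetilde\mu\|G(\cdot)\|$ from below by $\widetilde\mu\langle w,G(\cdot)\rangle$ with $\|w\|\le 1$, and use the descent-lemma bound for $G$ on the convex set $\mathcal C$. This avoids both the subdifferential sum/chain rule and the external weak-convexity lemma. It also retains the negative term $-\rho_k\|G(x^{k+1})\|^2$, giving an error bound $\widetilde\mu^2/(4\rho_k)$ that is uniform in $\|G(x^{k+1})\|$. The paper's route is shorter once those tools are granted.

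One step of yours needs repair. The asymptotic hypotheses on $\{\rho_k\}$ do not exclude $\rho_{k+1}>4\rho_k$ at finitely many early indices; for instance, $\rho_0$ is fixed by Assumption~\ref{ass-ec}(vi) and $\rho_1$ is arbitrary. At those indices, enlarging $\widetilde\mu$ cannot rescue $\widetilde\mu^2/(4\rho_k)\le\widetilde\mu^2/\rho_{k+1}$, because both sides scale like $\widetilde\mu^2$.

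The fix is already in your hands and makes the ratio condition unnecessary. Drop $-\rho_k\|G(x^{k+1})\|^2\le 0$ and apply \eqref{eq-wt-mu} at index $k+1$, which gives $\rho_{k+1}\|G(x^{k+1})\|\le\widetilde\mu$. The error is then $\widetilde\mu\|G(x^{k+1})\|\le\widetilde\mu^2/\rho_{k+1}=\nu_k$ for every $k\ge 0$. This is exactly how the paper closes the step.
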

\begin{proof}
First, we consider item (i). 
Let $k\ge 0.$
Let $\widetilde{\xi}^{k+1}$ be given as in \eqref{eq-h-des-1}.
By definition, we have
\begin{equation*}
    \|\widetilde{\xi}^{k+1}\|^2 \ge \dist\left(0, DG(x^{k+1})^{*} G(x^{k+1}) 
    + \partial\delta_{\cal C}(x^{k+1})\right)^2
    \ge \widetilde{\kappa}^2\|G(x^{k+1})\|^2 
    = 2 \widetilde{\kappa}^2 h(x^{k+1}),
\end{equation*}
where the second inequality follows from \eqref{eq-eb-ec} and the equality follows from the definition of $h$ in \eqref{eq-Lh-pen}.
This, together with \eqref{eq-h-des-1}, yields
\begin{align}
\notag h(x^{k+1}) +  \frac{\widetilde{\kappa}^2}{16} \rho_k \alpha_k h(x^{k+1})
& \le  h(x^k) + \frac{17(M_f + M_{\phi})^2\alpha_k}{16\rho_k} 
\quad \forall k\ge 0.
\end{align}

Note that $\alpha_k  = 1/(L_f +\rho_k ( M_{G}^2 + L_{G} \|G(x^{k})\| )) \le 1/(\rho_k  M_{G}^2)$ for all $k\ge 0$, 
which, together with $\rho_k= \Theta(k^{r})$, implies that 
$\alpha_k /\rho_k = \mathcal{O}(k^{-2r})$.
Therefore, the sequence $\{\alpha_k /\rho_k\}$ is summable since $r\in (\frac{1}{2}, 1)$.
In view of this and the last display, we conclude that $\{h(x^{k})\}$ is bounded.
Thus, there exists $C_{G}>0$ such that $\|G(x^{k})\| \le C_{G}$ for all $k\ge 0.$
This further yields that $\alpha_k = \Theta(\rho_k^{-1}) = \Theta(k^{-r})$.

By the last display and the facts $\alpha_k \rho_k = \Theta(1)$ and $\alpha_k/\rho_k = \Theta(k^{-2r})$,
there exist positive constants $c_1, c_2$ such that
\begin{align}
\notag h(x^{k+1}) + c_1 h(x^{k+1}) 
& \le  h(x^k) +  c_2 (1+k)^{-2r} 
\quad \forall k\ge 0.
\end{align}
Invoking Lemma~\ref{lm-rate} with $p=1$, $r_1=0$ and $r_2=2r$,  
we obtain that $h(x^{k}) = \mathcal{O}(k^{-2r})$ 
and hence $\|G(x^{k})\| = \mathcal{O}(k^{-{r}})$.
Combining this with the fact $\rho_k = \Theta(k^{r})$, 
we obtain \eqref{eq-wt-mu}.

Next, we prove item (ii). The inequality \eqref{eq-Q-des-1} follows directly from \eqref{eq-Q-des}.
Moreover, by \eqref{eq-wt-mu}, we have $\widetilde{\beta}_{k} \coloneqq \frac{1}{2} (\rho_{k+1} - \rho_k) \| G(x^{k+1})\|^2 
= \mathcal{O}(k^{-(1 + r)}),$ which implies that $M_{\widetilde{\beta}} 
\coloneqq \sum_{k=0}^{\infty} \widetilde{\beta}_k \in [0, \infty)$. 
For every $k\ge 0$, since $\rho_0\le \rho_k$, we have 
\begin{equation*}
    Q_{\rho_0}(x^{k}) \le Q_{\rho_k}(x^{k}) \le Q_{\rho_0}(x^0) + \sum_{l=0}^{k}\widetilde{\beta}_l 
    \le Q_{\rho_0}(x^0) + M_{\widetilde{\beta}},
\end{equation*}
where the second inequality follows from \eqref{eq-Q-des-1}.
Recalling the definition of $\rho_0$ in Assumption~\ref{ass-ec}(vi), we conclude that $\{x^k\}$ is bounded.

Next, we turn to item (iii). For every $k\ge 0$, since $\|G(x^{k})\| \le \widetilde{\mu}/\rho_k$ (see \eqref{eq-wt-mu}), we have
\begin{equation*}
 \frac{\widetilde{\mu}}{2} \|G(x^k)\| \le 
\widetilde\mu \| G(x^{k})\| - \frac{\rho_{k}}{2} \|G(x^k)\|^2 
= E_{\widetilde\mu}(x^{k}) - Q_{\rho_k}(x^{k})
\le \frac{\widetilde\mu^2}{2\rho_{k}},
\end{equation*}
where the second inequality holds because $\sup_{y\ge 0}\{{\widetilde{\mu}}y-(\rho_k/2)y^2\} = \widetilde{\mu}^2/(2\rho_k)$.

Finally, we prove item (iv). 
Let $k\ge 0.$ 
Let $\mu_{k+1} \coloneqq \rho_k \|G(x^{k+1})\|$, 
and let $u^{k+1} \coloneqq G(x^{k+1})/\|G(x^{k+1})\|$ if $G(x^{k+1}) \neq 0$, and $u^{k+1} \coloneqq 0$ otherwise.
Then we have $\mu_{k+1} \le \widetilde{\mu}$ (see \eqref{eq-wt-mu}) and 
$u^{k+1} \in \partial \| \cdot\| (G(x^{k+1}))$. 
Let $\xi^{k+1}$ be defined as in \eqref{eq-xi-ub-ec1}.
Then,
\begin{align}\label{eq-xi-Emu}
     \xi^{k+1} = \nabla f(x^{k+1}) + \mu_{k+1} DG(x^{k+1})^{*} u^{k+1} + \zeta^{k+1}_{\phi} + \zeta^{k+1}_{\mathcal{C}}
\in \partial E_{\mu_{k+1}} (x^{k+1}).
\end{align}
Therefore, for all $x\in \mathcal{C}$,
\begin{align}
\notag & f(x) + \phi(x) + \widetilde\mu\|G(x)\| \ge f(x) + \phi(x) + \mu_{k+1} \|G(x)\|
\\ \notag & \ge f(x^{k+1}) + \phi(x^{k+1}) + \mu_{k+1}\|G(x^{k+1})\|  + \langle \xi^{k+1}, x - x^{k+1} \rangle
- \frac{L_f + \mu_{k+1} L_G}{2} \| x - x^{k+1} \|^2
\\ \notag & \ge f(x^{k+1}) + \phi(x^{k+1})  + \langle \xi^{k+1}, x - x^{k+1} \rangle
- \frac{\widetilde{\varrho}}{2} \| x - x^{k+1} \|^2
\\ \notag & \ge f(x^{k+1}) + \phi(x^{k+1}) + \widetilde{\mu}\|G(x^{k+1})\|  + \langle \xi^{k+1}, x - x^{k+1} \rangle
- \frac{\widetilde{\varrho}}{2} \| x - x^{k+1} \|^2 - \widetilde{\mu}^2 \rho_{k+1}^{-1},
\end{align}
where the second inequality holds because of \eqref{eq-xi-Emu} and the facts that $x^{k+1}\in \mathcal{C}$ and $E_\mu$ is $L_f+\mu L_G$-weakly convex (see the discussion below \eqref{eq-E-rho}), 
the third inequality holds since $\mu_{k+1}\le \widetilde{\mu}$ and $\widetilde{\varrho} = L_f + \widetilde{\mu} L_G$, and the last inequality holds because $\|G(x^{k+1})\| \le \widetilde{\mu}/\rho_{k+1}$  (see \eqref{eq-wt-mu}).
This implies that $\xi^{k+1} \in \partial_{\nu_k}^{\widetilde{\varrho}} E_{\widetilde{\mu}}(x^{k+1})$.

Finally, the inequalities in \eqref{eq-C2-ec} follow directly from \eqref{eq-xi-ub-ec1} and \eqref{eq-wt-mu}.
\end{proof}

We are now ready to establish the convergence of Algorithm~\ref{alg-qpen-pg} with $r\in (2/3,1)$. Specifically, we prove the convergence of $\{x^k\}$ to a KKT point of \eqref{opt-NLP-eq}: we say that $x$ is a KKT point of \eqref{opt-NLP-eq} if there exists $\lambda \in \mathbb{R}^m$ such that
\begin{equation*}
    0 \in \nabla f(x) + \partial \phi(x) + \partial\delta_{\cal C}(x) + D G(x)^*\lambda
    \quad \text{and} \quad
    G(x) = 0;
\end{equation*}
recall that if $x$ is a local minimizer of \eqref{opt-NLP-eq}, then it is a KKT point under a certain standard constraint qualification; see, e.g., \cite[Example~10.8]{RW98}.

We will show that the $\{x^k\}$ generated satisfies (C1), (C2) and (C3) with suitable parameters and hence we can invoke Theorem~\ref{th-main} to establish convergence. To the best of our knowledge, this is the first global sequential convergence and convergence rate analysis for quadratic penalty methods (whose penalty parameters typically go to infinity) based on KL property of the commonly used $\ell_1$-based nonsmooth penalty function given as in \eqref{eq-E-rho}.

\begin{theorem}[Global convergence of Algorithm~\ref{alg-qpen-pg}]
Consider \eqref{opt-NLP-eq} and suppose that Assumption~\ref{ass-ec} holds. 
Let $\{\rho_k\}$ be a nondecreasing positive sequence such that $\rho_k = \Theta(k^{r})$ and $\rho_{k+1}- \rho_k = \mathcal{O}(k^{-(1-r)})$ for some $r\in (\frac{2}{3}, 1)$.
Let $\{x^{k}\}$ be generated by Algorithm~\ref{alg-qpen-pg}. 
Let $E_{\mu}$ and $\widetilde{\mu}$ be given in \eqref{eq-E-rho} and \eqref{eq-wt-mu}, 
respectively. 
Suppose that $E_{\widetilde{\mu}}$ is a KL function. 
Then the following statements hold.
\begin{enumerate}[{\rm (i)}]
    \item $\{x^{k}\}$ converges to a KKT point $x^*$ of \eqref{opt-NLP-eq}, 
and $\sum_{k=0}^{\infty}\|x^{k+1} - x^{k}\| < + \infty.$

\item If $E_{\widetilde{\mu}}$ is a KL function with exponent $\theta \in (0, 1)$,
then
\begin{equation*}
\| x^{k} - x^*\| = \mathcal{O}(k^{-\widetilde{r}}),
\end{equation*}
where
\begin{equation}\label{eq-rate-ec}
\widetilde{r} \coloneqq \widetilde{r}(r) \coloneqq \begin{cases}
\tfrac{3}{2}r - 1 
& \text{if \ } \theta \in (0, \tfrac{1}{2}] , 
\\ \min\left\{\tfrac{3}{2}r - 1, 
\frac{(1- \theta) r}{2 \theta}, 
\frac{(1 - \theta) (1 - r) }{2 \theta - 1} \right\}
& \text{if \ } \theta \in (\tfrac{1}{2}, 1).
\end{cases}
\end{equation}
\end{enumerate}
\end{theorem}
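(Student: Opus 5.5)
The plan is to apply Theorem~\ref{th-main} with $\psi \coloneqq E_{\widetilde{\mu}}$. This function is proper and closed, and it is $\widetilde{\varrho}$-weakly convex with $\widetilde{\varrho} = L_f + \widetilde{\mu}L_G$ by the discussion below \eqref{eq-E-rho}. I will verify (C1)--(C3) with $\alpha_k$ taken as the stepsizes of Algorithm~\ref{alg-qpen-pg}, check that (A\ref{(B)}) holds with $\widetilde{\tau}=2$, and then read off the rates.

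\textbf{Verifying (C1).} I define the surrogate
$$\hat{\psi}_k \coloneqq Q_{\rho_k}(x^k) + \sum_{l\ge k}\widetilde{\beta}_l .$$
The tail sum is finite because $\widetilde{\beta}_l = \mathcal{O}(l^{-(1+r)})$ by Lemma~\ref{lm-E-Q}(ii). Then \eqref{eq-Q-des-1} gives exactly
$$\tfrac{1}{2\alpha_k}\|x^{k+1}-x^k\|^2 \le \hat{\psi}_k - \hat{\psi}_{k+1},$$
so \eqref{eq-f-beta-des} holds with $a_1 = \tfrac12$. For \eqref{eq-f-psi}, Lemma~\ref{lm-E-Q}(iii) gives
$$|E_{\widetilde{\mu}}(x^k) - \hat{\psi}_k| \le \frac{\widetilde{\mu}^2}{2\rho_k} + \sum_{l\ge k}\widetilde{\beta}_l .$$
I set $\beta_k$ equal to this right-hand side. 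Since $\rho_k=\Theta(k^r)$ and the tail sum is $\mathcal{O}(k^{-r})$, we get $\beta_k = \mathcal{O}(k^{-r})$, i.e.\ $r_\beta = r$.

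\textbf{Verifying (C2) and (C3).} For (C2) I use Lemma~\ref{lm-E-Q}(iv) with $a_2=0$, $a_3=2$, $\gamma_k=0$, $\nu_k = \widetilde{\mu}^2/\rho_{k+1} = \mathcal{O}(k^{-r})$ and $M_\psi = 2(M_f+M_\phi+M_G\widetilde{\mu})$. Then $\widetilde{\nu}_k = \nu_k$, and $r_\nu = r$. Since $\gamma_k\equiv 0$, the exponent $r_\gamma$ may be taken arbitrarily large. For (C3), boundedness of $\{x^k\}$ is Lemma~\ref{lm-E-Q}(ii). If $x^{k_j}\to\bar x$, then $\bar x\in\mathcal{C}$ because $\{x^k\}\subseteq\mathcal{C}$ and $\mathcal{C}$ is closed. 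On $\mathcal{C}$, $E_{\widetilde{\mu}}$ coincides with the continuous function $f+\phi+\widetilde{\mu}\|G(\cdot)\|$, which gives \eqref{eq-continuity}.

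\textbf{Checking (A\ref{(B)}).} Lemma~\ref{lm-E-Q}(i) gives $\alpha_k=\Theta(k^{-r})$, so $r_\alpha = r\in[0,1)$. With $\tau=2$,
$$r_2 = \min\{r_\gamma, \tfrac12 r_\beta, \tfrac12 r_\nu\} = \tfrac{r}{2},$$
and therefore $r_\alpha + r_2 - 1 = \tfrac32 r - 1 > 0$ precisely because $r>\tfrac23$. This is where the restriction $r\in(\tfrac23,1)$ enters.

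\textbf{Item (i).} Theorem~\ref{th-main}(i) now yields $x^k\to x^*$ with $\sum_k\|x^{k+1}-x^k\|<\infty$ and $0\in\partial E_{\widetilde{\mu}}(x^*)$. It remains to upgrade this criticality to the KKT property, and I expect this to be the main (though mild) obstacle.
\begin{itemize}
\item \emph{Feasibility.} From \eqref{eq-wt-mu}, $\|G(x^k)\|\le \widetilde{\mu}/\rho_k\to 0$, so $G(x^*)=0$ and $x^*\in\mathcal{F}$.
\item \emph{Multiplier.} I will use the limiting-subdifferential sum rule \cite[Exercise~10.10]{RW98}, applicable because all summands except $\delta_{\mathcal C}$ are locally Lipschitz near $x^*$. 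I combine it with the chain rule \cite[Theorem~10.6]{RW98} for $\|G(\cdot)\|$, with $\|\cdot\|$ convex and $G$ smooth. This gives
$$\partial E_{\widetilde{\mu}}(x^*) \subseteq \nabla f(x^*) + \partial\phi(x^*) + \partial\delta_{\mathcal C}(x^*) + \widetilde{\mu}\, DG(x^*)^*\partial\|\cdot\|(0).$$
Hence there is $u$ with $\|u\|\le1$ such that $\lambda\coloneqq\widetilde{\mu}u$ is a KKT multiplier.
\end{itemize}

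\textbf{Item (ii).} For $\theta\in(0,\tfrac12]$, Theorem~\ref{th-main}(iii) applies with $\widetilde{\tau}=2$ and gives the exponent $r + \min\{r_\gamma,\tfrac r2,\tfrac r2\} - 1 = \tfrac32 r - 1$. For $\theta\in(\tfrac12,1)$, I apply Theorem~\ref{th-main}(iv) with $\tau=2$. Then $\theta_2=\max\{\theta,\tfrac12\}=\theta$ and $r_2 = r/2$, so
$$\widetilde{r}_2 = \min\left\{\tfrac32 r - 1,\ \frac{(1-\theta)r}{2\theta},\ \frac{(1-\theta)(1-r)}{2\theta-1}\right\},$$
which matches \eqref{eq-rate-ec}.
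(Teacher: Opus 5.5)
Your proposal is correct and follows the paper's proof essentially step for step: you make the same choices $\psi=E_{\widetilde\mu}$, $\hat\psi_k=Q_{\rho_k}(x^k)+\sum_{l\ge k}\widetilde\beta_l$, $\beta_k=\tfrac12\widetilde\mu^2\rho_k^{-1}+\sum_{l\ge k}\widetilde\beta_l$, $\gamma_k=0$ and $\nu_k=\widetilde\mu^2/\rho_{k+1}$, verify (A2) with $\widetilde\tau=2$ via $r_\alpha+r_2-1=\tfrac32 r-1>0$, and read off the rates from Theorem~\ref{th-main}(iii)--(iv) with $\tau=2$. The only difference is that you justify, via the limiting-subdifferential sum and chain rules, why a feasible critical point of $E_{\widetilde\mu}$ is a KKT point with multiplier $\widetilde\mu u$, $\|u\|\le 1$; the paper asserts this step in one line.
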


\begin{remark}
Let $\widetilde{r}(r)$ be defined by \eqref{eq-rate-ec}. If $\theta\in(0,\frac12]$, then $\sup\{
\widetilde{r}(r): r\in(\frac{2}{3}, 1)\}
=\frac12;$
if $\theta\in(\frac12,1)$, then $\max
\{ \widetilde{r}(r): r\in(\frac{2}{3},1)\} =\widetilde{r}(r^*)
=\frac{1-\theta}{4\theta-1},$
where $r^*=\frac{2\theta}{4\theta-1} \in (\frac{2}{3},1)$.
\end{remark}

\begin{proof}
  Let $\psi \coloneqq E_{\widetilde{\mu}}$,
$\hat{\psi}_{k} \coloneqq Q_{\rho_k}(x^{k}) 
+ \sum_{l=k}^{\infty} \widetilde{\beta}_l$ with $\widetilde{\beta}_l$ defined as in \eqref{eq-Q-des-1}, 
$\beta_{k} \coloneqq  \frac{1}{2}\widetilde\mu^2\rho_{k}^{-1} + 
\sum_{l=k}^{\infty} \widetilde{\beta}_l$,
$\gamma_k \coloneqq 0$,
$\nu_k \coloneqq \widetilde{\mu}^2/\rho_{k+1}$, and $\xi^{k+1}$ is defined as in \eqref{eq-C2-ec} for all $k\ge 0$.
Then, upon setting $r_{\alpha} = r_{\beta} = r_{\gamma} = r_{\nu} = r$, 
we have $\alpha_k = \Theta(k^{-r_{\alpha}})$ (see Lemma~\ref{lm-E-Q}(i)), 
$\beta_k = \mathcal{O}(k^{-r_{\beta}})$ (see Lemma~\ref{lm-E-Q}(iii)), 
$\gamma_k = \mathcal{O}(k^{-r_{\gamma}})$, 
and $\nu_k = \mathcal{O}(k^{-r_{\nu}})$.
Moreover, the sequence $\{x^{k}\}$ satisfies {\rm (C\ref{C1})} by \eqref{eq-Q-des-1} and \eqref{eq-E-Q}, 
satisfies {\rm (C\ref{C2})} by \eqref{eq-C2-ec}, and satisfies {\rm (C\ref{C3})} because $\{x^{k}\} \subseteq \mathcal{C}$ is bounded (see Lemma~\ref{lm-E-Q}(ii)) and $f,\phi,G$ are continuous.

Note that $r_{\tau} \coloneqq \min\{r_{\gamma}, \frac{\tau - 1}{\tau} r_{\beta}, \tfrac{1}{2} r_{\nu}\} = \frac{1}{2}r$
and $r_{\alpha} + r_{\tau} - 1 = \frac{3}{2}r-1 > 0$ for all $\tau \ge 2$. 
Thus, parameters $\{\alpha_k\}$, $\{\beta_{k}\}$, $\{\nu_k\}$ and $\{\gamma_{k}\}$ satisfy condition {\rm (A\ref{(B)})} of Theorem~\ref{th-main} with $\widetilde{\tau} =2$.
By Theorem~\ref{th-main}, $\{x^{k}\}$ converges to a critical point $x^*$ of $\psi = E_{\widetilde{\mu}}$, and $\sum_{k=0}^{\infty}\|x^{k+1} - x^{k}\| < + \infty.$
Since $\lim_{k\to\infty} \|G(x^{k})\| = 0$ (see \eqref{eq-wt-mu}), we have $G(x^*) = 0$, 
which implies that $x^*$ is a KKT point of \eqref{opt-NLP-eq}.
Finally, item (ii) follows directly from Theorem~\ref{th-main}(iii) and (iv) with $\tau = \widetilde{\tau} = 2.$
\end{proof}

\subsection{Application III: Inexact level constrained proximal gradient method}\label{sec3.3}

In this subsection, we consider the following constrained composite optimization problem:
\begin{align}\label{opt-compo-constr}
\min_{x \in \mathbb{R}^n} &\ \ \psi_0 (x) \coloneqq f_0(x) + \chi_0(x) \notag
\\ \text{s.t.} &\ \ \psi_i(x) \coloneqq f_i(x) + \chi_i(x) \leq \eta_i \quad \forall i = 1,\ldots,m,
\end{align}
where $\eta=(\eta_1,\ldots,\eta_m) \in \mathbb{R}^{m}$, and for each $i = 0,1,\ldots,m$, the function $\psi_i$ is given by the sum of a smooth function $f_i$ and a convex function $\chi_i$, and we impose the following assumption, following \cite[Assumptions 1, 2 and 3]{25BDL}. Below, for convenience, we use $[m]$ to denote the index set $\{1,\ldots,m\}$.
\begin{assumption}
\label{ass-lcpg}
In \eqref{opt-compo-constr}, assume that its optimal value is finite, and that 
\begin{enumerate}[{\rm (i)}]
\item for each $i=0,1,\ldots,m$, $f_i:\mathbb{R}^{n} \to \mathbb{R}$ is continuously differentiable and 
$L_i$-smooth on $\dom (\chi_0)$ with constant $L_i>0$, i.e., 
\begin{equation*}
\| \nabla f_i (x) - \nabla f_i(y) \| \le L_i \| x - y \| \quad \forall x, y \in \dom \chi_0;
\end{equation*}
moreover, $\chi_0:\mathbb{R}^n\to (-\infty,\infty]$ is proper, closed and convex, and each $\chi_i$, $i=1,\ldots,m$, is convex and real-valued;
furthermore, the feasible set of \eqref{opt-compo-constr}, i.e.,
$\mathcal{F} \coloneqq \left\{ x \in \mathbb{R}^n : \psi_i(x) \leq \eta_i, \; i=1,\ldots,m \right\} \cap \dom \chi_0$, is nonempty and compact;
\item (strict feasibility) there exist $x^{0} \in \dom  \chi_{0}$ and $\eta^{0}\in \mathbb{R}^{m}$ such that
\begin{equation*}
f_i(x^0) + \chi_i(x^0) < \eta^{0}_i < \eta_i \quad \forall i \in [m];
\end{equation*}
\item  the following Mangasarian-Fromovitz constraint qualification (MFCQ) holds at every $x\in \mathcal{F}$: 
there exists $z \in\mathcal{T}_{{\mathrm dom} {\chi_0}}(x)$ such that
\begin{equation*}
\max_{v \in \partial \psi_i(x)}\langle v, z\rangle < 0 \quad \forall i \in \mathcal{A}(x), 
\end{equation*}
where $\mathcal{A}(x) \coloneqq \{ i \in [m] : \psi_i(x) = \eta_i\}$, $\mathcal{T}_{{\mathrm dom} {\chi_0}}(x) := {\rm cl}\left(\bigcup_{t\ge 0}t({\rm dom}\,\chi_0 - x)\right)$ and  ${\rm cl}(C)$ denotes the closure of the set $C$.
\end{enumerate}
\end{assumption}

A classical method for solving \eqref{opt-compo-constr} under the above assumption (and assuming in addition that $\chi_i\equiv 0$ for $i = 0,\ldots,m$) is the moving balls approximation (MBA) method first introduced in \cite{10AST}. Global sequential convergence and the convergence rate of many variants of MBA methods have been established, assuming that the subproblems are {\em exactly} solved and a special potential function satisfies the KL property; see \cite{16BP,21YPL,20BCP}. 
Here, we focus on a recent variant of MBA methods known as the inexact level constrained proximal gradient (LCPG) method, proposed in \cite{25BDL}. This method modifies the subproblems of MBA method by slightly perturbing their feasible sets, and allows these modified subproblems to be {\em inexactly} solved. Its global sequential convergence property and local convergence rate are currently unknown. Here, we show that this method fits in our framework for some weakly convex function $\psi$ and hence Theorem~\ref{th-main} can be invoked to derive its convergence properties.

The main idea of inexact LCPG method is to construct a sequence of relatively easier subproblems 
based on some convex surrogate functions $\psi^{k}_i$ ($i=0,1,\ldots, m$) and variable constraint levels $\eta^{k}$:
\begin{align}
\label{opt-sub-lcpg}
& \min_{x\in \mathbb{R}^{n}}\!
\left\{ \psi_{0}^{k}(x) : \psi_{i}^{k}(x) \le \eta^{k}_i,\ i=1,\ldots, m\right\},
\end{align}
where, for each $i = 0, 1,\ldots, m$, the function $\psi_i^k$ is defined by
\begin{equation}\label{eq-lcpg-psi-ik}
\psi_i^k(x) \coloneqq f_i(x^k)+\langle\nabla f_i(x^k), x-x^k\rangle+\tfrac{L_i+\ell_i}{2}\|x-x^k\|^2+\chi_i(x)\ \ \forall x \in \mathbb{R}^n;
\end{equation}
here, $\ell_{0} > 0$ is a parameter and $\ell_{i} = 0$ for all $i\in [m]$.\footnote{Here, we introduce an additional parameter $\ell_{0}>0$ to simplify our analysis. Although the original LCPG method takes $\ell_0 =0$, our formulation is equivalent to applying the LCPG method with $L_0+\ell_0$ in place of $L_0$ as the Lipschitz constant of $\nabla f_0$.} The inexact LCPG method (see \cite[Section~6]{25BDL}) is presented in Algorithm~\ref{alg-lcpg}.

\begin{algorithm}[h!]
\caption{Inexact level constrained proximal gradient (LCPG) method for \eqref{opt-compo-constr}} 
\label{alg-lcpg}
\begin{algorithmic}[0]
\setlength{\columnwidth}{\linewidth}
\State {\bf Require.} $x^0$, $\eta^{0}$ and $L_i$, $i=0,\ldots,m$, as in Assumption~\ref{ass-lcpg}, $\ell_0 > 0$ and $\ell_i = 0$ for all $i\in [m]$.
\For{$k=0,1,\ldots$} 
\State Choose $\delta^k$ such that $\delta^k_i\in (0,\eta_i-\eta_i^k)$ for all $i\in [m]$, pick $\varepsilon_k \in [0,\min_{i\in [m]}\delta^k_i)$.
\State Compute $x^{k+1}$, an inexact solution of \eqref{opt-sub-lcpg}, such that
\begin{align}\label{eq-inex-1}
& \psi_0^k(x^{k+1})-\psi_0^k(\widetilde{x}^{k+1}) \leq \varepsilon_k, \quad \|[\psi^k(x^{k+1}) - \eta^k]_{+}\| \leq \varepsilon_k,
\\ & \mathcal{L}_k(x^{k+1}, \widetilde{\lambda}^{k+1}) \leq \mathcal{L}_k(\widetilde{x}^{k+1}, \widetilde{\lambda}^{k+1})+\varepsilon_k,
\label{eq-inex-2}
\end{align}
\State where $\widetilde{x}^{k+1}$ is an exact solution of \eqref{opt-sub-lcpg}, $\widetilde\lambda^{k+1}\in \R^m_+$ is a Lagrange
 multiplier, 
 \State and 
\begin{equation}\label{eq-Lag-k}
    \mathcal{L}_{k}(x, \lambda) \coloneqq 
    \psi_{0}^{k}(x) +  \sum_{i=1}^{m} \lambda_{i}(\psi_{i}^{k}(x) - \eta^{k}_i)
    \quad \forall (x, \lambda) \in \mathbb{R}^n\times\mathbb{R}^m.
\end{equation}
\State Set $\eta^{k+1}=\eta^k+\delta^k$.
\EndFor
\end{algorithmic}
\end{algorithm}

We summarize some fundamental properties of the inexact LCPG method in Lemma~\ref{lm-lcpg-wd} below.
The claim on strict feasibility of $x^k$ for \eqref{opt-sub-lcpg} in item (ii) can be seen from the first display of the proof of \cite[Theorem~7]{25BDL}.
From this, together with an argument analogous to the proof of \cite[Proposition 2]{25BDL}, items (i) and (ii) follow. 
Items (iii) and (iv) are derived from \cite[Theorem~7]{25BDL}.
\begin{lemma}
\label{lm-lcpg-wd}
Consider \eqref{opt-compo-constr} and suppose that Assumption~\ref{ass-lcpg} holds, 
and that $\lim_{k\to\infty}\eta^k = \eta$. 
Then the following statements hold for Algorithm~\ref{alg-lcpg}.
\begin{enumerate}
[{\rm (i)}] 
\item The sequence $\{x^k\}$ is well-defined and feasible for problem \eqref{opt-compo-constr}; in particular, this means, for each $k\ge 0$, a solution $\widetilde x^{k+1}$ of \eqref{opt-sub-lcpg} and a Lagrange multiplier $\widetilde \lambda^{k+1}$ exist. 
\item For each $k\ge 0$, $x^k$ is strictly feasible for \eqref{opt-sub-lcpg}; moreover, the exact solution $\widetilde{x}^{k+1}$ of \eqref{opt-sub-lcpg} is its unique solution, and together with the $\widetilde{\lambda}^{k+1}\in \mathbb{R}^{m}_{+}$, it holds that
\begin{align}\label{opt-lcpg-lag}
& \widetilde{x}^{k+1} = \argmin_{x\in \R^n} \mathcal{L}_k(x, \widetilde{\lambda}^{k+1}),
\end{align}
and 
\begin{equation}\label{eq-lcpg-sub-kkt}
    0 \in \partial_x \mathcal{L}_k(\widetilde{x}^{k+1}, \widetilde{\lambda}^{k+1}), \ \ 
\widetilde{\lambda}_i^{k+1}\left(\psi_i^k(\widetilde{x}^{k+1})-\eta_i^k\right)  =0 \quad \forall i \in[m],
\end{equation}
where $\mathcal{L}_k$ is the Lagrangian function associated with \eqref{opt-sub-lcpg}, defined as in \eqref{eq-Lag-k}.
\item There exists a constant $B>0$ such that $\sup\{\|\widetilde{\lambda}^{k+1}\|: k\in \mathbb{N}_0\} \le B$.

\item  Every accumulation point $\bar{x}$ of $\{x^k\}$ is a KKT point of \eqref{opt-compo-constr}, i.e., 
$\bar x$ is feasible for \eqref{opt-compo-constr} and there exists $\bar \lambda\in \mathbb{R}^m_+$ such that
\begin{align*}
    & 0 \in  \nabla f_{0}(\bar x) + \partial \chi_0(\bar x) + \sum_{i=1}^{m} \bar \lambda_i ( \nabla f_{i}(\bar x) + \partial \chi_i(\bar x)),
    \\ & \bar \lambda_i (  f_{i}(\bar x) + \chi_i(\bar x) - \eta_i) = 0 \quad \forall i\in [m].
\end{align*}
\end{enumerate}
\end{lemma}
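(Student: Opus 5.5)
The plan is to establish the four items in order. The argument is essentially an adaptation of \cite[Proposition~2 and Theorem~7]{25BDL}, together with the strict feasibility observation recorded in the first display of the proof of \cite[Theorem~7]{25BDL}. The core of the whole lemma is an induction on $k$ carrying two facts simultaneously: $x^k$ is feasible for \eqref{opt-compo-constr}, and $x^k$ is strictly feasible for the $k$-th subproblem \eqref{opt-sub-lcpg}.

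\textbf{Step 1: the induction for (i) and the first part of (ii).}
\begin{itemize}
\item \emph{Base case.} By \eqref{eq-lcpg-psi-ik}, with $\ell_i=0$ for $i\in[m]$, we have $\psi_i^k(x^k)=\psi_i(x^k)$. Hence Assumption~\ref{ass-lcpg}(ii) gives $\psi_i^0(x^0)=\psi_i(x^0)<\eta_i^0$ for all $i\in[m]$, so $x^0$ is strictly feasible for the $0$-th subproblem.
\item \emph{Well-definedness.} Strict feasibility is a Slater point for the convex problem \eqref{opt-sub-lcpg}. Its objective $\psi_0^k$ is $(L_0+\ell_0)$-strongly convex and closed, because $\chi_0$ is closed. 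So \eqref{opt-sub-lcpg} has a unique solution $\widetilde x^{k+1}$. Slater's condition then yields a multiplier $\widetilde\lambda^{k+1}\in\mathbb{R}^m_+$ satisfying the saddle-point relation \eqref{opt-lcpg-lag} and the KKT conditions \eqref{eq-lcpg-sub-kkt}.
\item \emph{Inductive step.} By $L_i$-smoothness, $\psi_i(x)\le\psi_i^k(x)$ for $i\in[m]$. The inexactness criterion \eqref{eq-inex-1} gives $\psi_i^k(x^{k+1})\le\eta_i^k+\varepsilon_k$. Since $\varepsilon_k<\delta_i^k$, this yields
\[
\psi_i^{k+1}(x^{k+1})=\psi_i(x^{k+1})\le\psi_i^k(x^{k+1})\le \eta_i^k+\varepsilon_k<\eta_i^k+\delta_i^k=\eta_i^{k+1}.
\]
This is strict feasibility for the $(k+1)$-th subproblem, and it also implies $\psi_i(x^{k+1})<\eta_i$, which is feasibility for \eqref{opt-compo-constr}.
\end{itemize}

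\textbf{Step 2: boundedness of multipliers, item (iii).} This is the main obstacle. I would argue by contradiction. Suppose $\|\widetilde\lambda^{k_j+1}\|\to\infty$ along some subsequence.
\begin{itemize}
\item Since all iterates lie in the compact set $\mathcal{F}$, pass to further subsequences so that $x^{k_j}\to\bar x$, $\widetilde x^{k_j+1}\to\hat x$, and $\widetilde\lambda^{k_j+1}/\|\widetilde\lambda^{k_j+1}\|\to\bar\lambda\neq 0$.
\item Divide the stationarity inclusion in \eqref{eq-lcpg-sub-kkt} by $\|\widetilde\lambda^{k_j+1}\|$. The term coming from $\partial\chi_0$ is a normal-cone-type element, so its rescaled limit is handled by the outer semicontinuity of $\partial\chi_0$ and of the convex subdifferentials $\partial\chi_i$. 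In the limit we obtain a nonzero nonnegative combination of subgradients of the active constraints that lies in $-N_{\dom\chi_0}(\hat x)$.
\item Complementary slackness, together with $\eta^k\to\eta$, shows that the indices in the support of $\bar\lambda$ are active at the limit point.
\end{itemize}
This contradicts MFCQ in Assumption~\ref{ass-lcpg}(iii), provided one first shows $\hat x=\bar x$, i.e., $\|\widetilde x^{k+1}-x^k\|\to0$. That in turn requires a sufficient-descent argument: $\psi_0(x^{k+1})\le\psi_0(x^k)-\tfrac{\ell_0}{2}\|\cdots\|^2+\varepsilon_k$. This descent follows from comparing $\psi_0^k(\widetilde x^{k+1})$ with $\psi_0^k(x^k)$ (using strong convexity and feasibility of $x^k$), combined with $\sum_k\varepsilon_k<\infty$. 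Summability holds because $\sum_k\delta_i^k\le\eta_i-\eta_i^0$.

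\textbf{Step 3: limit points, item (iv).} Take an accumulation point $\bar x$ of $\{x^k\}$. Along a subsequence, use the bounded multipliers from item (iii) to extract a limit $\bar\lambda$. Also use $\|x^{k+1}-\widetilde x^{k+1}\|\to0$, which follows from \eqref{eq-inex-2} and strong convexity of $\mathcal L_k(\cdot,\widetilde\lambda^{k+1})$ since $\varepsilon_k\to0$. Passing to the limit in the subproblem KKT system \eqref{eq-lcpg-sub-kkt}, using closedness of the convex subdifferentials and $\eta^k\to\eta$, gives exactly the KKT system of \eqref{opt-compo-constr} at $\bar x$. Feasibility of $\bar x$ comes from item (i) and closedness of $\mathcal{F}$.
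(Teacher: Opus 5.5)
Your proposal is correct and follows the paper's route: the paper's own proof only cites the strict-feasibility display in the proof of \cite[Theorem~7]{25BDL} and the argument of \cite[Proposition~2]{25BDL} for items (i)--(ii), and \cite[Theorem~7]{25BDL} for items (iii)--(iv), and your four steps (the feasibility induction, Slater multipliers, the MFCQ contradiction using $\|\widetilde x^{k+1}-x^k\|\to 0$ from strong convexity and $\sum_k\varepsilon_k<\infty$, and the limit passage in the subproblem KKT system) are a faithful self-contained reconstruction of those arguments. It is worth making two details explicit: $x^{k+1}\in\dom\chi_0$ follows from the first inequality in \eqref{eq-inex-1} and is needed before you invoke $L_i$-smoothness, which holds only on $\dom\chi_0$; and the rescaled $\partial\chi_0$-term lands in $N_{\dom\chi_0}(\hat x)$ because you can divide the subgradient inequality by $\|\widetilde\lambda^{k_j+1}\|$ and use an affine minorant of $\chi_0$, not because of outer semicontinuity of $\partial\chi_0$.
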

We establish in Lemma~\ref{lm-lcpg-Cs} below that the $\{x^k\}$ generated by the inexact LCPG method 
can be interpreted as a minimizing sequence of the following penalty function:
\begin{equation}\label{eq-lcpg-Phi-beta}
F_{\beta} (x) \coloneqq \psi_{0}(x) + \beta \sum_{i=1}^{m}[\psi_i(x) - \eta_i]_{+} \quad \forall x \in \mathbb{R}^{n},
\end{equation}
where $\beta>0$ is a sufficiently large penalty parameter, and $\psi_i$'s are given in \eqref{opt-compo-constr}.
\begin{lemma}\label{lm-lcpg-Cs}
Consider \eqref{opt-compo-constr} and suppose that Assumption~\ref{ass-lcpg} holds, and that $\lim_{k\to\infty}\eta^k = \eta$.	
Let $\{x^{k}\}$ be generated by Algorithm~\ref{alg-lcpg}, 
with each iterate satisfying \eqref{eq-inex-1} and \eqref{eq-inex-2}.
Let $B$ be given in Lemma~\ref{lm-lcpg-wd}(iii), and let $F_{\beta}$ be defined by \eqref{eq-lcpg-Phi-beta} with a parameter $\beta\ge B$.
Let $\varrho \coloneqq 4(L_0 + \ell_0+ \beta \sum_{i=1}^m L_i)$.
Then the following statements hold.
\begin{enumerate}[{\rm (i)}]
\item $F_{\beta}$ is $\varrho$-weakly convex. 

\item For each $k\ge 0$, $F_{\beta}(x^{k+1}) + \| \eta - \eta^{k+1} \|_1 \le F_{\beta}(x^{k}) + \| \eta - \eta^{k} \|_1 - \frac{\ell_0}{2}\| x^{k+1} - x^{k}\|^2$.
\item For each $k\ge 0$, it holds that $0 \in \partial_{\widetilde{\nu}_{k}}^{ \varrho} F_{\beta}(x^{k+1})$, 
where
\begin{equation}\label{def-nuk-lcpg}
    \widetilde{\nu}_{k} \coloneqq 
\frac{\varrho}{\ell_0}  (F_{\beta}(x^{k}) + \| \eta - \eta^{k} \|_1 - F_{\beta}(x^{k+1}) - \| \eta - \eta^{k+1} \|_1 ) + (1 + \beta )\|\eta - \eta^k\|_{1}\ge 0.
\end{equation}
\item The sequence $\{x^{k}\}$ is bounded. 
Moreover, for any convergent subsequence $\{x^{k_j}\}$ with $\lim_{j\to\infty}x^{k_j} = \bar{x}$ for some $\bar x\in \R^n$, it holds that $\lim_{j\to\infty}F_{\beta}(x^{k_j}) = F_{\beta}(\bar{x}) \in \mathbb{R}$.
\end{enumerate}
\end{lemma}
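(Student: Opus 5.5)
Item (i) comes first. Each $\psi_i=f_i+\chi_i$ with $f_i$ being $L_i$-smooth on $\dom\chi_0$ is $L_i$-weakly convex there, and $[\cdot]_+$ is convex and nondecreasing. Hence $[\psi_i-\eta_i]_+$ is a max of two $L_i$-weakly convex functions, so it is $L_i$-weakly convex. Adding $\psi_0$, which is $L_0$-weakly convex (together with $\chi_0$), shows that $F_\beta$ is $(L_0+\beta\sum_iL_i)$-weakly convex, and therefore $\varrho$-weakly convex since $\varrho$ is larger.

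For item (ii), the plan is to compare $F_\beta(x^{k+1})$ with the surrogate Lagrangian. The descent lemma gives $\psi_i(x)\le\psi_i^k(x)$ for $i\ge1$, and $\psi_0(x)\le \psi_0^k(x)-\frac{\ell_0}{2}\|x-x^k\|^2$. Feasibility from Lemma~\ref{lm-lcpg-wd}(i) gives $[\psi_i(x^{k+1})-\eta_i]_+=0$, so $F_\beta(x^{k+1})=\psi_0(x^{k+1})$. Next, (\ref{eq-inex-1}) and strict feasibility of $x^k$ for the subproblem give $\psi_0^k(x^{k+1})\le\psi_0^k(\widetilde x^{k+1})+\varepsilon_k\le\psi_0^k(x^k)+\varepsilon_k=\psi_0(x^k)+\varepsilon_k$. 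Since $\varepsilon_k<\delta^k_i$ and $\|\eta-\eta^k\|_1-\|\eta-\eta^{k+1}\|_1=\sum_i\delta^k_i$ (because $\eta^k\uparrow\eta$ componentwise), this yields the claim; a finer bound using the strong convexity modulus $\ell_0$ of $\psi_0^k$ should produce the $\frac{\ell_0}{2}\|x^{k+1}-x^k\|^2$ term. Concretely, I would use the tighter chain $\psi_0(x^{k+1})\le\psi_0^k(x^{k+1})-\frac{\ell_0}{2}\|x^{k+1}-x^k\|^2$, and the $\varepsilon_k$ term is then absorbed by $\sum_i\delta_i^k$.

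Item (iii) is the main obstacle. The approach uses $\widetilde\lambda^{k+1}$, with $\|\widetilde\lambda^{k+1}\|\le B\le\beta$. By (\ref{opt-lcpg-lag}) and the $\ell_0$-strong convexity of $\mathcal L_k(\cdot,\widetilde\lambda^{k+1})$, for all $x$ we have $\mathcal L_k(x,\widetilde\lambda^{k+1})\ge\mathcal L_k(\widetilde x^{k+1},\widetilde\lambda^{k+1})\ge\mathcal L_k(x^{k+1},\widetilde\lambda^{k+1})-\varepsilon_k$ by (\ref{eq-inex-2}). The lower side is handled via the upper quadratic bounds: $\mathcal L_k(x,\lambda)\le \psi_0(x)+\sum_i\lambda_i(\psi_i(x)-\eta_i^k)+\frac{L_0+\ell_0+\sum\lambda_iL_i}{2}\cdot 2(\|x-x^{k+1}\|^2+\|x^{k+1}-x^k\|^2)$ (using the lower quadratic bound of $f_i$ to swap the linearization at $x^k$ for the true $\psi_i$). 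Then $\sum_i\lambda_i(\psi_i(x)-\eta_i^k)\le\beta\sum_i[\psi_i(x)-\eta_i]_++\beta\|\eta-\eta^k\|_1$. On the other side, $\mathcal L_k(x^{k+1},\widetilde\lambda^{k+1})\ge\psi_0(x^{k+1})+\sum\widetilde\lambda_i(\psi_i^k(x^{k+1})-\eta^k_i)$, and this is at least $F_\beta(x^{k+1})-\beta\varepsilon_k$, or is bounded using $\|[\psi^k(x^{k+1})-\eta^k]_+\|\le\varepsilon_k$ together with a complementarity-type estimate. Collecting terms gives $F_\beta(x)\ge F_\beta(x^{k+1})-\frac{\varrho}{2}\|x-x^{k+1}\|^2-\frac{\varrho}{2}\|x^{k+1}-x^k\|^2-(1+\beta)\|\eta-\eta^k\|_1$ (absorbing $\varepsilon_k\le\|\eta-\eta^k\|_1$). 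Bounding $\frac{\varrho}{2}\|x^{k+1}-x^k\|^2$ by $\frac{\varrho}{\ell_0}$ times the decrease in (ii) yields (\ref{def-nuk-lcpg}). The delicate bookkeeping is the sign of the multiplier terms at $x^{k+1}$, where the negative parts $\psi_i^k(x^{k+1})-\eta_i^k<0$ must be controlled. I would try to use $\mathcal L_k(\widetilde x^{k+1},\widetilde\lambda)=\psi_0^k(\widetilde x^{k+1})$ (complementarity) together with the first inequality in (\ref{eq-inex-1}) instead of bounding $\mathcal L_k(x^{k+1},\cdot)$ directly.

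Item (iv) follows from feasibility: $\{x^k\}\subseteq\mathcal F$, which is compact, so the sequence is bounded. For continuity along subsequences, lower semicontinuity gives $\liminf_j F_\beta(x^{k_j})\ge F_\beta(\bar x)$, and finiteness of $F_\beta(\bar x)$ follows from $\bar x\in\mathcal F$. For the limsup, as in Lemma~\ref{lm-prox-bundle}(iv), apply item (iii) at $x=\bar x$. This needs $\widetilde\nu_{k_j-1}\to0$, which holds because the quantities in (ii) are monotone and bounded below (the optimal value is finite), and because $\eta^k\to\eta$.
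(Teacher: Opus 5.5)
Your items (i), (ii) and (iv) are correct and essentially the paper's argument. In (ii) you compare directly with $\psi_0^k(x^k)=\psi_0(x^k)$ via feasibility of $x^k$ for \eqref{opt-sub-lcpg}, whereas the paper evaluates $\mathcal L_k(x^k,\widetilde\lambda^{k+1})$. In (iv) you get finiteness of $F_\beta(\bar x)$ from closedness of $\mathcal F$ rather than from (ii). Both variants are fine.

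The route you actually write out for (iii), however, contains a step that does not hold. The bound $\|[\psi^k(x^{k+1})-\eta^k]_+\|\le\varepsilon_k$ controls $\sum_i\widetilde\lambda_i^{k+1}(\psi_i^k(x^{k+1})-\eta_i^k)$ only from above. The indices with $\psi_i^k(x^{k+1})<\eta_i^k$ enter with nonnegative weights $\widetilde\lambda_i^{k+1}$, and that bound says nothing about them, so the claim ``$\mathcal L_k(x^{k+1},\widetilde\lambda^{k+1})\ge F_\beta(x^{k+1})-\beta\varepsilon_k$'' is unjustified as written. A lower bound is available: $\mathcal L_k(x^{k+1},\widetilde\lambda^{k+1})\ge\mathcal L_k(\widetilde x^{k+1},\widetilde\lambda^{k+1})=\psi_0^k(\widetilde x^{k+1})\ge\psi_0^k(x^{k+1})-\varepsilon_k$. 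But combined with \eqref{eq-inex-2}, this charges $\varepsilon_k$ twice. Under the lemma's hypotheses, $2\varepsilon_k\le\|\eta-\eta^k\|_1$ can fail (e.g.\ $m=1$ with $\varepsilon_k$ close to $\delta_1^k$ and $\delta_1^k$ close to $\eta_1-\eta_1^k$). So that route does not recover the constant $1+\beta$ in \eqref{def-nuk-lcpg}.

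The alternative in your last sentence is exactly the paper's proof, and it never uses \eqref{eq-inex-2}. For every $z\in\dom\chi_0$,
\begin{align*}
F_\beta(x^{k+1})=\psi_0(x^{k+1})&\le\psi_0^k(x^{k+1})\le\psi_0^k(\widetilde x^{k+1})+\varepsilon_k\\
&=\mathcal L_k(\widetilde x^{k+1},\widetilde\lambda^{k+1})+\varepsilon_k\le\mathcal L_k(z,\widetilde\lambda^{k+1})+\varepsilon_k .
\end{align*}
The steps use, in order: feasibility of $x^{k+1}$, the descent lemma, the first inequality in \eqref{eq-inex-1}, complementarity in \eqref{eq-lcpg-sub-kkt}, and \eqref{opt-lcpg-lag}. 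From there, combine your upper estimate of $\mathcal L_k(z,\widetilde\lambda^{k+1})$ with $0\le\widetilde\lambda_i^{k+1}\le B\le\beta$, with $\varepsilon_k<\min_i\delta_i^k\le\|\eta-\eta^k\|_1$, and with item (ii); this gives \eqref{def-nuk-lcpg}. For $z\notin\dom\chi_0$ there is nothing to prove.

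One correction in that estimate. Replacing the linearization at $x^k$ by $f_i(z)$ adds a second quadratic on top of the one already in the model. Hence the coefficient of $\|z-x^k\|^2$ is $L_0+\frac{\ell_0}{2}+\sum_i\widetilde\lambda_i^{k+1}L_i$, not $\frac12\bigl(L_0+\ell_0+\sum_i\widetilde\lambda_i^{k+1}L_i\bigr)$ as in your display. After applying $\|z-x^k\|^2\le2\|z-x^{k+1}\|^2+2\|x^{k+1}-x^k\|^2$, the resulting coefficient $2L_0+\ell_0+2\sum_i\widetilde\lambda_i^{k+1}L_i$ is still at most $\varrho/2$. So the inequality you finally collect, with $\varrho/2$, is right.
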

\begin{proof}
Let $\widetilde \varrho \coloneqq L_0 + \beta \sum_{i=1}^m L_i$. Then, we have
\begin{equation*}
F_{\beta}(\cdot)  + \tfrac{\widetilde\varrho}{2}\|\cdot\|^2 
= \psi_0(\cdot) + \tfrac{L_0}{2}\|\cdot\|^2 + \beta \sum_{i=1}^{m} 
\max\{ \tfrac{L_i}{2}\|\cdot\|^2 , \psi_i(\cdot) - \eta_i + \tfrac{L_i}{2}\|\cdot\|^2\},
\end{equation*}
which is convex because each function $\psi_i(\cdot) + \tfrac{L_i}{2}\|\cdot\|^2$ is convex on $\dom (\chi_0)$ according to Assumption~\ref{ass-lcpg}(i).
This shows that $F_{\beta}$ is $\widetilde \varrho$-weakly convex.
Since $\varrho \ge \widetilde{\varrho}$, we obtain item (i).

Since the sequence $\{x^{k}\}$ is feasible for \eqref{opt-compo-constr} (see Lemma~\ref{lm-lcpg-wd}(i)), we have
\begin{equation}\label{eq-F=psi}
    F_{\beta}(x^k) = \psi_0(x^k) \quad \forall k\ge 0.
\end{equation}
Moreover, for all $k\ge 0$ and $z \in \dom \chi_0$,
\begin{align}
    \notag \psi_0(x^{k+1}) + \frac{\ell_0}{2}\| x^{k+1} - x^{k}\|^2 
     & \overset{\mathrm{(a)}}{\le}  \psi_0^k(x^{k+1}) \overset{\mathrm{(b)}}{\le} \psi_0^k(\widetilde x^{k+1}) + \varepsilon_k  \notag
\\ & \overset{\mathrm{(c)}}{=} \mathcal{L}_{k}(\widetilde x^{k+1}, \widetilde{\lambda}^{k+1})  + \varepsilon_k
\overset{\mathrm{(d)}}{\le}\mathcal{L}_{k}(z, \widetilde{\lambda}^{k+1}) + \varepsilon_k, 
\label{eq-inex-psi}
\end{align}
where (a) follows from the $L_0$-smoothness of $f_0$ on $\dom (\chi_0)$ and the definition of $\psi^k_0$ in \eqref{eq-lcpg-psi-ik}, 
(b) follows from  \eqref{eq-inex-1}, 
(c) follows from the complementary slackness condition in \eqref{eq-lcpg-sub-kkt},
and (d) follows from \eqref{opt-lcpg-lag}.

Now, we consider item (ii). Setting $z = x^{k}$ in the last display yields
\begin{align*}
    \psi_0(x^{k+1}) + \frac{\ell_0}{2}\| x^{k+1} - x^{k}\|^2
    \le \psi_0(x^{k}) + \sum_{i=1}^{m} \widetilde{\lambda}^{k+1}_{i} ( \psi_i^k(x^{k}) - \eta^{k}_i ) + \varepsilon_k
    \le \psi_0(x^{k}) + \varepsilon_k,
\end{align*}
where the last inequality holds because $x^{k}$ is feasible for \eqref{opt-sub-lcpg} (see Lemma~\ref{lm-lcpg-wd}(ii)).
Moreover, for any $k\ge 0$, since $\eta^{k} < \eta^k + \delta^k = \eta^{k+1}<\eta$, we have
\begin{equation}\label{eq-eps-eta}
    \varepsilon_k < \min_{i\in [m]}\delta^k_i \le 
     \|\delta^{k}\|_1 = \| \eta - \eta^{k}\|_1 - \| \eta - \eta^{k+1} \|_1.
\end{equation}
Combining the last two displays with \eqref{eq-F=psi}, we obtain item (ii).

Next, we turn to item (iii).  
Let $k\ge 0$. Clearly, $\widetilde{\nu}_{k}\ge 0$ thanks to item (ii). Next, from \eqref{eq-F=psi} and \eqref{eq-inex-psi}, it follows that for all $z\in \dom \chi_0$,
\begin{align}
 & F_{\beta}(x^{k+1}) \le \mathcal{L}_{k}(z, \widetilde{\lambda}^{k+1}) + \varepsilon_k \notag
\\ & = f_{0}(x^{k}) + \langle \nabla f_0(x^{k}), z - x^{k}\rangle + \frac{L_0+\ell_0}{2} \| z - x^{k}\|^2  + \chi_0(z)  \notag
\\ & \ \ + \sum_{i=1}^{m} \widetilde \lambda^{k+1}_i 
\left(f_{i}(x^{k}) + \langle \nabla f_i(x^{k}), z - x^{k}\rangle + \frac{L_i}{2} \| z - x^{k}\|^2  + \chi_i(z) - \eta_i^k\right)
+ \varepsilon_k \notag
\\ & \overset{\mathrm{(a)}}{\le} f_0(z) + (L_0+\ell_0)\| z - x^{k}\|^2  + \chi_0(z) +  \sum_{i=1}^{m} \widetilde \lambda^{k+1}_i 
\left(f_i(z) + L_i \| z - x^{k}\|^2  + \chi_i(z) - \eta_i^k\right)
+ \varepsilon_k \notag
\\ & = \psi_0(z) +  \sum_{i=1}^{m} \widetilde \lambda^{k+1}_i 
\left(\psi_i(z) - \eta_i + \eta_i - \eta_i^k\right) 
 + \left(L_0 + \ell_0 +  \sum_{i=1}^{m} \widetilde \lambda^{k+1}_i L_i\right) \| z - x^{k}\|^2 
+ \varepsilon_k \notag
\\ & \overset{\mathrm{(b)}}{\le} \psi_0(z) +  \beta \sum_{i=1}^{m}
[\psi_i(z) - \eta_i]_+ + \beta \|\eta - \eta^k\|_1 
 + \left(L_0 + \ell_0+ \beta \sum_{i=1}^m L_i \right) \| z - x^{k}\|^2 
+ \varepsilon_k \notag
\\ & \overset{\mathrm{(c)}}{\le}  F_{\beta}(z)  + \frac{{{\varrho}}}{2} \| z - x^{k+1}\|^2 + \frac{{{\varrho}}}{2} \| x^{k+1} - x^{k}\|^2  
+ (\beta+1) \|\eta - \eta^k\|_1 \notag
\\ & \overset{\mathrm{(d)}}{\le} 
F_{\beta}(z)  + \frac{{{\varrho}}}{2} \| z - x^{k+1}\|^2 + \widetilde{\nu}_{k},\notag
\end{align}
where (a) follows from the $L_i$-smoothness of $f_i$ on $\dom (\chi_0)$,\footnote{Specifically, we applied the inequality $-f_i(z)\le -f_i(x^k) - \langle \nabla f_i(x^k),z-x^k\rangle + \frac{L_i + \ell_i}{2}\|z-x^k\|^2$.} 
(b) follows from Lemma~\ref{lm-lcpg-wd}(iii) and the condition that $\beta\ge B$, 
(c) follows from ${\varrho} = 4(L_0 + \ell_0+ \beta \sum_{i=1}^m L_i)$, \eqref{eq-eps-eta}, 
and the fact that $\| u + v\|^2 \le 2\|u\|^2 + 2\|v\|^2$ for all $u,v\in \mathbb{R}^n$, 
and (d) holds by item (ii) and the definition of $\widetilde{\nu}_k$ in \eqref{def-nuk-lcpg}. 
This implies item (iii).
 
(iv) By Lemma~\ref{lm-lcpg-wd}(i) and Assumption~\ref{ass-lcpg}(i), the sequence $\{x^{k}\}\subseteq \mathcal{F}$ is bounded. 
Let $\{x^{k_j}\}$ be a convergent subsequence and let $\bar x$ denote its limit. 
Then we see from item (ii), the closedness of $\chi_0$ and the continuity of $\chi_i$, $i\in [m]$, that
\[
F_\beta(\bar x)\le \liminf_{j\to \infty} F_\beta(x^{k_j})\le F_\beta(x^0) + \|\eta - \eta^0\|_1 < \infty,
\]
showing that $\bar x\in {\rm dom}\,\chi_0$, and hence $F_\beta(\bar x)\in \R$. Next, using the fact that $0 \in \partial_{\widetilde{\nu}_{k}}^{ \varrho} F_{\beta}(x^{k+1})$ (see item (iii)), we have for all $j\ge 0$,
\[
f_0(x^{k_j})+\chi_0(x^{k_j}) = F_\beta(x^{k_j})\le F_\beta(\bar x) + \frac{{{\varrho}}}{2} \| \bar x - x^{k_j}\|^2 + \widetilde{\nu}_{k_j-1}.
\]
Passing to the limit as $j\to\infty$ on both sides of the above inequality, and using the assumption $\lim_{k\to\infty}\eta^k = \eta$ (which, together with item (ii) and the fact that $\{x^k\}\subseteq \dom \chi_0$ is bounded, implies $\widetilde{\nu}_{k}\to 0$), we obtain that
$\limsup_{j\to\infty} \chi_0(x^{k_j}) \le \chi_0(\bar{x})$.  
Hence $\lim_{j\to\infty} \chi_0(x^{k_j}) = \chi_0(\bar{x})$ in view of the closedness of $\chi_0$.
This, together with the definition of $F_{\beta}$,
implies item (iv).
\end{proof}

We are now ready to establish the convergence of the sequence generated by the inexact LCPG method under suitable choices of $\{\delta^k$\} based on the KL property and KL exponent of the penalty function $F_\beta$ in \eqref{eq-lcpg-Phi-beta}. We would like to mention that previous convergence analysis for these kinds of methods relied on the KL property of a special potential function which is derived by replacing each $\psi_i$, $i=0,\ldots,m$, with a quadratic majorant, whose KL property and exponent can be relatively harder to analyze; see, e.g., \cite{16BP,21YPL,25LPB}.

\begin{theorem}[Global convergence of inexact LCPG]
Consider \eqref{opt-compo-constr} and suppose that Assumption~\ref{ass-lcpg} holds, and that the sequence $\{\delta^k\}$ in Algorithm~\ref{alg-lcpg} is chosen according to either {\rm (S1)} or {\rm (S2)} below.
\begin{enumerate}[{\bf \rm (S1)}]
\item Pick $q_{\delta}\in (0, 1)$ and set $\delta^k = (\eta - \eta^0)(1 - q_{\delta}) q^k_{\delta}$ for all $k\ge 0$.
\item Pick $r_{\eta}>2$ and set $\delta^k = (\eta - \eta^0) \left( (1+k)^{-r_{\eta}} - (2+k)^{-r_{\eta}} \right)$ for all $k\ge 0$.
\end{enumerate}
Let $\varepsilon_k = \frac{1}{2} \min_{i\in [m]}\delta^k_i$ for all $k\ge 0$.
Let $\{x^{k}\}$ be generated by the inexact LCPG method, 
with each iterate satisfying \eqref{eq-inex-1} and \eqref{eq-inex-2}.
Let $B$ be given in Lemma~\ref{lm-lcpg-wd}(iii) and $\beta \ge B$.
Suppose that $F_{\beta}$ defined in \eqref{eq-lcpg-Phi-beta} is a KL function. Then the following statements hold. 
\begin{enumerate}[{\rm (i)}]
    \item $\{x^{k}\}$ converges to a KKT point $x^*$ of \eqref{opt-compo-constr}, 
and $\sum_{k=0}^{\infty}\|x^{k+1} - x^{k}\| < + \infty$.

\item If $F_{\beta}$ is a KL function with exponent $\theta \in (0, 1)$, then
\begin{enumerate}[{\rm (a)}]
        \item {\rm (Linear convergence)} if $\theta \in (0, \frac{1}{2}]$ and $\{\delta^k\}$ is chosen according to {\rm (S1)}, 
    then there exists $q \in (0, 1)$ such that
    \begin{equation*}
    	\| x^{k}  - x^*\| = \mathcal{O}(q^{k});
    \end{equation*}
    \item {\rm (Sublinear convergence)} if $\{\delta^k\}$ is chosen according to {\rm (S2)}, then
    \begin{equation*}
    	\| x^{k}  - x^*\| = \mathcal{O}(k^{-\widetilde{r}})
        \quad \text{with} \quad 	\widetilde{r} \coloneqq \begin{cases}
     \frac{r_{\eta}}{2} - 1 & \text{if } \theta \in (0, \tfrac{1}{2}], 
	 \\ \min\left\{\frac{r_{\eta}}{2} - 1, 
    \frac{(1-\theta)r_{\eta}}{2\theta}, \frac{1-\theta}{2\theta - 1} \right\}
    & \text{if } \theta \in (\tfrac{1}{2}, 1).
	\end{cases}
    \end{equation*}
\end{enumerate}    
\end{enumerate}

\end{theorem}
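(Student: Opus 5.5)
The plan is to verify that the inexact LCPG iterates fit conditions (C1)--(C3) with the potential $\psi\coloneqq F_\beta$, and then read off everything from Theorem~\ref{th-main}.

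\emph{Admissibility of the parameter choices.} Under (S1), $\eta^k=\eta^0+\sum_{l<k}\delta^l=\eta-(\eta-\eta^0)q_\delta^k$. Under (S2), the sum telescopes to $\eta^k=\eta-(\eta-\eta^0)(1+k)^{-r_\eta}$. In both cases $\eta^k\to\eta$ and $\delta^k_i\in(0,\eta_i-\eta_i^k)$. The choice $\varepsilon_k=\frac12\min_i\delta^k_i$ lies in $[0,\min_i\delta_i^k)$. Hence Lemmas~\ref{lm-lcpg-wd} and~\ref{lm-lcpg-Cs} apply. Moreover,
\[
\|\eta-\eta^k\|_1=\|\eta-\eta^0\|_1 q_\delta^k \ \text{under (S1)}, \qquad \|\eta-\eta^k\|_1=\|\eta-\eta^0\|_1(1+k)^{-r_\eta} \ \text{under (S2)}.
\]

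\emph{Verifying (C1)--(C3).} By Lemma~\ref{lm-lcpg-Cs}(i), $\psi=F_\beta$ is $\varrho$-weakly convex. I would set
\[
\hat\psi_k\coloneqq F_\beta(x^k)+\|\eta-\eta^k\|_1,\quad \alpha_k\equiv1,\quad a_1=\tfrac{\ell_0}{2},\quad \beta_k\coloneqq\|\eta-\eta^k\|_1 .
\]
Then (C1) is exactly Lemma~\ref{lm-lcpg-Cs}(ii) together with the trivial bound $|\psi(x^k)-\hat\psi_k|=\beta_k$. For (C2), take
\[
\xi^{k+1}\coloneqq0,\quad \gamma_k\coloneqq0,\quad a_2\coloneqq\varrho/\ell_0,\quad \nu_k\coloneqq(1+\beta)\|\eta-\eta^k\|_1,
\]
with $a_3$ and $M_\psi>0$ arbitrary. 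Then $\widetilde\nu_k$ in \eqref{eq-nu-tilde} coincides with \eqref{def-nuk-lcpg}, and Lemma~\ref{lm-lcpg-Cs}(iii) gives $0\in\partial^{\varrho}_{\widetilde\nu_k}F_\beta(x^{k+1})$. Condition (C3) is Lemma~\ref{lm-lcpg-Cs}(iv).

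\emph{Checking (A1)/(A2) and reading off the rates.} Under (S1), $\alpha_k=\Theta(1)$, $\beta_k,\nu_k=\mathcal O(q_\delta^k)$, and $\gamma_k=0=\mathcal O(q^k)$ for any $q$, so (A\ref{(A)}) holds. Under (S2), take $r_\alpha=0$, $r_\beta=r_\nu=r_\eta$, and $r_\gamma$ arbitrarily large (allowed since $\gamma_k\equiv0$). Then for every $\tau\ge2$ we have $\frac{\tau-1}{\tau}\ge\frac12$, so
\[
r_\tau=\min\Bigl\{r_\gamma,\tfrac{\tau-1}{\tau}r_\eta,\tfrac12 r_\eta\Bigr\}=\tfrac{r_\eta}{2}.
\]
Thus $r_\alpha+r_{\widetilde\tau}-1=\frac{r_\eta}{2}-1>0$ precisely because $r_\eta>2$, and (A\ref{(B)}) holds with $\widetilde\tau=2$.

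The conclusions then follow from Theorem~\ref{th-main} in turn.
\begin{itemize}
\item Item (i) of Theorem~\ref{th-main} gives convergence of $\{x^k\}$ to some $x^*$ and finite length of the trajectory. Since $x^*$ is an accumulation point of $\{x^k\}$, Lemma~\ref{lm-lcpg-wd}(iv) shows it is a KKT point of \eqref{opt-compo-constr}.
\item Item (ii)(a) is Theorem~\ref{th-main}(ii).
\item For item (ii)(b) with $\theta\le\frac12$, Theorem~\ref{th-main}(iii) gives the exponent $\widetilde r=0+\frac{r_\eta}{2}-1$, once $r_\gamma$ is taken large.
\item For item (ii)(b) with $\theta>\frac12$, Theorem~\ref{th-main}(iv) with $\tau=2$ gives $\theta_2=\max\{\theta,\frac12\}=\theta$ and $r_2=\frac{r_\eta}{2}$. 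Hence
\[
\widetilde r_2=\min\Bigl\{\tfrac{r_\eta}{2}-1,\ \tfrac{(1-\theta)r_\eta}{2\theta},\ \tfrac{1-\theta}{2\theta-1}\Bigr\},
\]
which matches the claimed rate.
\end{itemize}

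The only delicate points are matching $\widetilde\nu_k$ exactly to the form $a_2(\hat\psi_k-\hat\psi_{k+1})+\nu_k$, and making sure the $\|\eta-\eta^k\|_1$ bookkeeping yields the exponents $r_\beta=r_\nu=r_\eta$. Everything else is direct substitution into Theorem~\ref{th-main}.
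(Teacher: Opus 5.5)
Your proposal is correct and matches the paper's proof: the same potential $F_\beta$, surrogate $\hat\psi_k = F_\beta(x^k)+\|\eta-\eta^k\|_1$, choices $\beta_k=\|\eta-\eta^k\|_1$, $\nu_k=(1+\beta)\|\eta-\eta^k\|_1$, $\gamma_k=0$, $\xi^{k+1}=0$, and the same reduction to (A1) under (S1) and to (A2) with $\widetilde\tau=2$, $r_\tau=r_\eta/2$ under (S2). The differences are cosmetic: you normalize $\alpha_k\equiv 1$ with $a_1=\ell_0/2$ where the paper takes $\alpha_k=1/\ell_0$, and you take $r_\gamma$ large where the paper takes $r_\gamma=r_\eta$; you also check admissibility of $\delta^k$ and $\varepsilon_k$ explicitly, which the paper leaves implicit.
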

\begin{proof}
For {\rm (S1)}, we have $\|\eta - \eta^k\|_1 = \|\eta - \eta^0\|_1 q^k_{\delta}$ for all $k\ge 0$.
For {\rm (S2)}, we have $\|\eta - \eta^k\|_1 = \|\eta - \eta^0\|_1 (1+k)^{-r_{\eta}}$ for all $k\ge 0$.
For both choices, $\lim_{k\to\infty}\eta^{k} = \eta$.

Setting $\psi = F_{\beta}$, ${\varrho} = 4(L_0 + \ell_0 + \beta \sum_{i=1}^m L_i)$, 
and let $\alpha_{k} = 1/\ell_0$, $\beta_{k} = \| \eta - \eta^{k}\|_1$, 
$\gamma_{k} = 0$, $\nu_k = (1 + \beta) \|\eta - \eta^k\|_{1}$, 
and $\hat{\psi}_k = F_{\beta}(x^{k}) + \| \eta - \eta^{k}\|_1$, for all $k\ge 0$.
It follows from Lemma~\ref{lm-lcpg-Cs} that $\{x^{k}\}$ satisfies conditions {\rm (C\ref{C1})}, {\rm (C\ref{C2})}, and {\rm (C\ref{C3})} with $\xi^{k+1} = 0$.

For {\rm (S1)}, condition {\rm (A\ref{(A)})} of Theorem~\ref{th-main} holds.
On the other hand, suppose that $\{\delta^{k}\}$ is chosen according to {\rm (S2)},
then $\beta_k = \|\eta - \eta^k\|_1  = \mathcal{O}( k^{-r_{\eta}})$ and $\nu_k = (1 + \beta) \|\eta - \eta^k\|_{1} = \mathcal{O}( k^{-r_{\eta}} )$.
Moreover, $\alpha_k = \Theta(k^{-r_{\alpha}})$ and $\gamma_{k} = 0 = \mathcal{O}(k^{-r_{\gamma}})$, with $r_{\alpha} = 0$ and $r_{\gamma} = r_{\eta}$.
Then, we have, upon noting $r_\eta > 2$, that
\[
r_{\tau} = \min\left\{r_{\gamma}, \frac{\tau - 1}{\tau}r_{\beta}, \frac{1}{2} r_{\nu}\right\} = \min\left\{r_{\eta}, \frac{\tau - 1}{\tau}r_{\eta}, \frac{1}{2} r_{\eta}\right\} = \frac{1}{2} r_{\eta}\ \ {\rm and}\ \ 
r_{\alpha} + r_{\tau} - 1>0\ \ \forall \tau \ge 2.
\]
In this setting, condition {\rm (A\ref{(B)})} of Theorem~\ref{th-main} holds with $\widetilde{\tau} =2.$

Then, item (i) follows directly from Theorem~\ref{th-main}(i), 
together with the fact that every accumulation point $\bar{x}$ of $\{x^k\}$ is a KKT point of \eqref{opt-compo-constr} (see Lemma~\ref{lm-lcpg-wd}(iv)).
Item (ii)(a) follows from Theorem~\ref{th-main}(ii).
For item (ii)(b), the case $\theta\in (0, \frac{1}{2}]$ follows from \eqref{rate-main1}, 
and the case $\theta \in (\frac{1}{2},1)$ follows from \eqref{rate-main2} with $\tau = 2$.
\end{proof}

\subsection{Application IV: Variable smoothing method}\label{sec3.4}
Variable smoothing methods have been proposed for handling objectives that involve proximable functions composed with linear maps; see, e.g., \cite{15BH,20BB,BW21} and references therein.
Here, we follow \cite{BW21} and consider the following optimization problem:
\begin{equation}\label{opt-fgA}
\min_{x\in \mathbb{R}^n}\, \psi(x) \coloneqq f(x) + g(Ax),
\end{equation} 
where $f$, $g$ and $A$ satisfy the following assumption.
\begin{assumption}\label{ass-fgA}
In problem \eqref{opt-fgA}, 
\begin{enumerate}[{\rm (i)}]
\item the function $f:{\mathbb R}^n \rightarrow \mathbb{R}$
is $L_{f}$-smooth with constant $L_f>0$;

\item the function $g :{\mathbb R}^m \rightarrow \mathbb{R}$ 
is $\varrho$-weakly convex and $M_{g}$-Lipschitz continuous with constants $\varrho> 0$ and $M_{g}>0$; 

\item the matrix $A\in \mathbb{R}^{m\times n}$ is nonzero;

\item the level set $\Omega_0 \coloneqq \{ x : \psi(x) \le \psi(x^0) + M_g^2 (2\varrho)^{-1} \}$ is bounded for some $x^0 \in \mathbb{R}^n$.
\end{enumerate}
\end{assumption}
\begin{remark}
Assumptions~\ref{ass-fgA}(i)--(ii) were imposed in \cite{BW21} to establish the iteration complexity of the variable smoothing method for finding an approximate stationary point; see \cite[Theorem~4.1]{BW21}. To further ensure boundedness of the iterates, we additionally impose the level-boundedness condition in Assumption~\ref{ass-fgA}(iv).
\end{remark}

\begin{remark}[Weak convexity of $\psi$ in \eqref{opt-fgA}]\label{rm-fgA-wc}
Consider \eqref{opt-fgA} and suppose that Assumption~\ref{ass-fgA} holds.
We recall below that the $\psi$ in \eqref{opt-fgA} is $\widetilde{\varrho}$-weakly convex with $\widetilde{\varrho} \coloneqq L_f + \varrho\|A\|^2$.

Indeed, let $x \in \mathbb{R}^n$ and $\xi \in \partial \psi(x)$.
Since $\partial \psi(x) \subseteq \nabla f(x) + A^* \partial g (Ax)$ by \cite[Theorem~10.6]{RW98}, 
there exists $\zeta \in \partial g(Ax)$ such that $\xi = \nabla f(x) + A^*\zeta$.
Moreover, by the $\varrho$-weak convexity of $g$ and the $L_f$-smoothness of $f$, for any $z \in \mathbb{R}^n$,
\begin{align*}
    f(z) + g(Az) & \ge f(x) + \langle \nabla f(x), z - x\rangle - \tfrac{L_f}{2}\| z - x\|^2 + g(Ax)  + \langle \zeta, Az - Ax\rangle - \tfrac{\varrho}{2} 
    \| A(z - x) \|^2
    \\& \ge f(x) + g(Ax) + \langle \xi, z - x\rangle - \frac{L_f + \varrho\|A\|^2}{2} \| z - x \|^2,
\end{align*}
which, together with \cite[Lemma 2.1]{19DavisDrusvyatskiy},
implies that $\psi$ is $\widetilde{\varrho}$-weakly convex.
\end{remark}

The variable smoothing method in \cite{BW21} for \eqref{opt-fgA} approximates 
$g$ by its Moreau envelope $g_{\mu}$ with parameter $\mu\in (0, \varrho^{-1})$, which is defined as
\begin{equation}\label{eq-def-gmu-ME}
g_{\mu}(y) \coloneqq \inf_{w \in \mathbb{R}^m}\bigg\{ g(w) + \frac{1}{2\mu} 
\| w - y \|^2\bigg\} \quad \forall y\in \mathbb{R}^m.
\end{equation}
It also makes use of the proximal operator of the function $\mu g$ with parameter $\mu\in (0, \varrho^{-1})$, which is defined as 
\begin{equation*}
    \mathrm{prox}_{\mu g}(y) \coloneqq \argmin_{w \in \mathbb{R}^m}\bigg\{ g(w) + \frac{1}{2\mu} 
\| w - y \|^2\bigg\} \quad \forall y\in \mathbb{R}^m;
\end{equation*} 
here, the set of minimizers is a singleton because the function $g(\cdot) + \|\cdot - y\|^2/(2\mu)$ is strongly convex when $\mu\in (0, \varrho^{-1})$, thanks to the $\varrho$-weak convexity of $g$.
In the next lemma, we summarize some properties of the Moreau envelopes of weakly convex functions. 
\begin{lemma}\label{lm-wc-Moreau}
Let $g :{\mathbb R}^m \rightarrow \mathbb{R}$ 
be $\varrho$-weakly convex and $M_{g}$-Lipschitz continuous for some $\varrho> 0$ and $M_{g}>0$. Then the following statements hold.
\begin{enumerate}[{\rm (i)}]
    \item for every $\mu\in (0, \varrho^{-1})$, 
        the function $g_{\mu}$ defined in \eqref{eq-def-gmu-ME} is continuously differentiable on $\mathbb{R}^m$, and satisfies, for all $y\in \mathbb{R}^m$,
        \begin{align}\label{nabl gmu}
        & \nabla g_{\mu} (y) = \tfrac{1}{\mu} ( y - \mathrm{prox}_{\mu g} (y) )
        \in \partial g( \mathrm{prox}_{\mu g} (y) ) 
        \quad \text{and} \quad \|\nabla g_{\mu}(y)\| \le M_g;
        \end{align}

     \item for every $\mu\in (0, \varrho^{-1})$, the gradient $\nabla g_{\mu}$ is $\max\{\mu^{-1}, \frac{\varrho}{1-\varrho \mu} \}$-Lipschitz continuous;

    \item for any $0< \mu_2 < \mu_1 < \varrho^{-1}$, 
    \begin{equation}\label{gmu12}
        g_{\mu_2}(y) \le g_{\mu_1}(y) + \frac{\mu_1}{2\mu_2} M_g^2 (\mu_1 - \mu_2) 
        \quad \forall y \in \mathbb{R}^m;
    \end{equation}
    
    \item for every $\mu\in (0, \varrho^{-1})$, $g_{\mu}$ is $\frac{\varrho}{1-\varrho \mu}$-weakly convex;

    \item for any $\mu \in (0, \varrho^{-1})$ and $y\in \mathbb{R}^m$, $g(y) - \tfrac{1}{2} M_g^2 \mu \le g_{\mu}(y) \le g(y) $.
\end{enumerate}
\end{lemma}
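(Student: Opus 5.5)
All five items will be obtained from one representation of $g_\mu$ through a convex conjugate, plus elementary choices of test points in \eqref{eq-def-gmu-ME}. Fix $\mu\in(0,\varrho^{-1})$ and set $h\coloneqq g+\frac{1}{2\mu}\|\cdot\|^2$. Since $g+\frac{\varrho}{2}\|\cdot\|^2$ is convex, $h$ is $(\mu^{-1}-\varrho)$-strongly convex and real-valued. Expanding the square in \eqref{eq-def-gmu-ME} gives
\[
g_\mu(y)=\tfrac{1}{2\mu}\|y\|^2-h^*(y/\mu).
\]
Strong convexity of $h$ implies that $h^*$ is differentiable with $\frac{\mu}{1-\varrho\mu}$-Lipschitz gradient, and that $\nabla h^*(y/\mu)$ is the unique maximizer of $\langle y/\mu,w\rangle-h(w)$, namely $\mathrm{prox}_{\mu g}(y)$. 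Hence $g_\mu$ is $C^1$ and $\nabla g_\mu(y)=\frac1\mu(y-\mathrm{prox}_{\mu g}(y))$.

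For (i), the optimality condition of the prox subproblem, using \cite[Exercise 8.8]{RW98} for the smooth quadratic term, gives $\frac1\mu(y-\mathrm{prox}_{\mu g}(y))\in\partial g(\mathrm{prox}_{\mu g}(y))$. Since $g$ is $M_g$-Lipschitz, every limiting subgradient has norm at most $M_g$, which yields the bound $\|\nabla g_\mu(y)\|\le M_g$.

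For (ii) and (iv), write $a\coloneqq\mu^{-1}$ and $c\coloneqq h^*(\cdot/\mu)$. Then $c$ is convex with $L$-Lipschitz gradient, where $L=\mu^{-2}\cdot\frac{\mu}{1-\varrho\mu}=\frac{1}{\mu(1-\varrho\mu)}\ge a$, and $g_\mu=\frac a2\|\cdot\|^2-c$.
\begin{itemize}
\item For (iv), note that $g_\mu+\frac{L-a}{2}\|\cdot\|^2=\frac L2\|\cdot\|^2-c$. This is convex because $c$ is convex with $L$-Lipschitz gradient. Moreover $L-a=\frac{\varrho}{1-\varrho\mu}$, which is the claimed weak convexity constant.
\item For (ii), let $d=x-y$ and $\Delta=\nabla c(x)-\nabla c(y)$. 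Cocoercivity $\langle d,\Delta\rangle\ge L^{-1}\|\Delta\|^2$ gives
\[
\|ad-\Delta\|^2\le a^2\|d\|^2+(1-2a/L)\|\Delta\|^2.
\]
If $1-2a/L\le0$, this is at most $a^2\|d\|^2$. Otherwise, using $\|\Delta\|\le L\|d\|$, it is at most $(L-a)^2\|d\|^2$. In either case $\nabla g_\mu$ is $\max\{\mu^{-1},\frac{\varrho}{1-\varrho\mu}\}$-Lipschitz.
\end{itemize}
I expect this step to be the only delicate one, because the constant comes out right only if both the upper curvature $a$ and the lower curvature $-(L-a)$ are tracked correctly.

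For (iii), take $w\coloneqq\mathrm{prox}_{\mu_1 g}(y)$ as a test point in the definition of $g_{\mu_2}(y)$. This gives
\[
g_{\mu_2}(y)\le g_{\mu_1}(y)+\|w-y\|^2\Big(\tfrac1{2\mu_2}-\tfrac1{2\mu_1}\Big).
\]
By (i), $\|w-y\|=\mu_1\|\nabla g_{\mu_1}(y)\|\le\mu_1M_g$. Substituting, the error term becomes $\frac{\mu_1^2M_g^2(\mu_1-\mu_2)}{2\mu_1\mu_2}$, which is exactly the right-hand side of \eqref{gmu12}.

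For (v), the upper bound $g_\mu(y)\le g(y)$ follows by taking $w=y$. For the lower bound, the Lipschitz property gives $g(w)\ge g(y)-M_g t$ with $t=\|w-y\|$. Therefore $g_\mu(y)\ge g(y)+\inf_{t\ge0}\{-M_gt+\frac{t^2}{2\mu}\}=g(y)-\frac12M_g^2\mu$.
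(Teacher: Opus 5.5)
Your proof is correct, but it takes a genuinely different route from the paper's. The paper establishes (i)--(iv) only by citation: (i)--(iii) come from Lemmas~3.1, 3.2, 3.3 and 4.1 of \cite{BW21}, and (iv) from Lemma~5.4 of \cite{25ReLePa}. Only (v) is argued directly, and there the paper uses exactly your test point $w=y$ and your Lipschitz lower bound.

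You instead derive everything from the single identity $g_\mu=\frac{1}{2\mu}\|\cdot\|^2-h^*(\cdot/\mu)$, where $h=g+\frac{1}{2\mu}\|\cdot\|^2$ is $(\mu^{-1}-\varrho)$-strongly convex:
\begin{itemize}
\item differentiability and the gradient formula follow from the strong-convexity/smoothness duality for $h^*$;
\item the modulus $L-a=\frac{\varrho}{1-\varrho\mu}$ in (iv) follows from the $L$-smoothness of $c=h^*(\cdot/\mu)$;
\item the constant $\max\{a,L-a\}$ in (ii) follows from Baillon--Haddad cocoercivity of $\nabla c$, which handles the two-sided curvature bound without second derivatives;
\item (iii) follows from the test point $\mathrm{prox}_{\mu_1 g}(y)$ together with the bound $\|y-\mathrm{prox}_{\mu_1 g}(y)\|\le\mu_1 M_g$ from (i).
\end{itemize}
The computations check out: $L=\frac{1}{\mu(1-\varrho\mu)}$, the case split on the sign of $1-2a/L$, and the error term in (iii).

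The paper's route buys brevity at the cost of depending on external, somewhat specialized lemmas. Yours is self-contained and uses only standard facts: conjugate duality, Baillon--Haddad, and boundedness of limiting subgradients of Lipschitz functions. It also makes transparent that $\mu^{-1}$ and $\frac{\varrho}{1-\varrho\mu}$ are the upper and lower curvature bounds of $g_\mu$ in this difference-of-convex decomposition.
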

\begin{proof}
Items (i), (ii) and (iii) follow from \cite[Lemmas~3.1, 3.2, 3.3 and 4.1]{BW21}.
Item (iv) follows from \cite[Lemma~5.4]{25ReLePa}.

Now, we consider item (v). Indeed, for any $\mu \in (0, \varrho^{-1})$ and $y\in \mathbb{R}^m$, 
    \begin{align*}
g(y) & \ge \inf_{w \in \mathbb{R}^m}\bigg\{g(w) + \frac{1}{2\mu} \| w - y \|^2\bigg\} = g_{\mu}(y) \notag
\\ & \ge \inf_{w \in \mathbb{R}^m}\bigg\{ g(y) - M_{g} \| w - y\| + \frac{1}{2\mu} \| w - y \|^2\bigg\}
= g(y) - \tfrac{1}{2} M_g^2 \mu, 
    \end{align*}
where the second inequality follows from the $M_g$-Lipschitz continuity of $g$.
\end{proof}

In view of Lemma~\ref{lm-wc-Moreau}(i), a smooth approximation of the objective in \eqref{opt-fgA} can be constructed as follows:
\begin{equation}\label{eq-def-Psi-mu}
\psi_{\mu}(x) \coloneqq f(x) + g_{\mu}(Ax) 
\quad \forall x\in \mathbb{R}^n, \mu\in (0, \varrho^{-1}),
\end{equation}
where $g_{\mu}$ is defined by \eqref{eq-def-gmu-ME}. We collect some properties of this smooth approximation in the next lemma.

\begin{lemma}\label{lm-psi-mu}
Consider \eqref{opt-fgA} and suppose that Assumption~\ref{ass-fgA} holds. 
Let $g_{\mu}$ and $\psi_{\mu}$ be defined by \eqref{eq-def-gmu-ME} and \eqref{eq-def-Psi-mu}, respectively.
Then, for any $\mu\in (0, \varrho^{-1})$, 
\begin{enumerate}[{\rm (i)}]
    \item $\psi_{\mu}$ is differentiable and has an $L_{\mu}$-Lipschitz continuous gradient $\nabla \psi_{\mu}$, where
\begin{equation}\label{eq-Lmu-vsm}
\nabla \psi_{\mu}(x) = \nabla f(x) + A^* \nabla g_{\mu}(Ax)
\quad \text{and} \quad
L_{\mu} \coloneqq L_f + \|A\|^2 \max \left\{ 
\mu^{-1}, \frac{\varrho}{1-\varrho \mu}
\right\};
\end{equation}
\item $\psi_{\mu}$ is $\varrho_{\mu}$-weakly convex,
where $\varrho_{\mu} \coloneqq L_f + \frac{\varrho}{1 - \varrho \mu}\|A\|^2$;

\item $\nabla \psi_{\mu}(x) \in \partial_{\nu_{\mu}}^{\varrho_{\mu}} \psi(x)$ for all $x\in \mathbb{R}^n$, where $\nu_{\mu} \coloneqq \frac{1}{2}M_g^2 \mu$.
\end{enumerate}
\end{lemma}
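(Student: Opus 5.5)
Items (i) and (ii) follow from Lemma~\ref{lm-wc-Moreau} by the chain rule, and item (iii) is a direct check of the defining inequality \eqref{eq-eps}.

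For (i), $x\mapsto g_\mu(Ax)$ is differentiable by Lemma~\ref{lm-wc-Moreau}(i), with gradient $A^*\nabla g_\mu(Ax)$. By Lemma~\ref{lm-wc-Moreau}(ii),
\[
\|A^*\nabla g_\mu(Ax)-A^*\nabla g_\mu(Ay)\|\le \|A\|^2\max\{\mu^{-1},\tfrac{\varrho}{1-\varrho\mu}\}\|x-y\|.
\]
Adding the $L_f$-Lipschitz continuity of $\nabla f$ gives $L_\mu$ as in \eqref{eq-Lmu-vsm}.

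For (ii), we mimic Remark~\ref{rm-fgA-wc}, but now with everything smooth. By Lemma~\ref{lm-wc-Moreau}(iv) and \eqref{eq-wc-subd} applied to the differentiable function $g_\mu$,
\[
g_\mu(Az)\ge g_\mu(Ax)+\langle \nabla g_\mu(Ax),A(z-x)\rangle-\tfrac{\varrho}{2(1-\varrho\mu)}\|A\|^2\|z-x\|^2 .
\]
Combining this with $f(z)\ge f(x)+\langle\nabla f(x),z-x\rangle-\tfrac{L_f}{2}\|z-x\|^2$ yields
\[
\psi_\mu(z)\ge\psi_\mu(x)+\langle\nabla\psi_\mu(x),z-x\rangle-\tfrac{\varrho_\mu}{2}\|z-x\|^2\quad\forall z .
\]
This says $\psi_\mu+\tfrac{\varrho_\mu}{2}\|\cdot\|^2$ has affine minorants supporting it at every point. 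A differentiable function with this property is convex, either as in \cite[Lemma 2.1]{19DavisDrusvyatskiy} or by the standard first-order characterization of convexity.

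For (iii), the key step is to sandwich $\psi_\mu$ using Lemma~\ref{lm-wc-Moreau}(v). First, $\psi(z)\ge\psi_\mu(z)$ for all $z$. Second, $\psi_\mu(x)\ge\psi(x)-\tfrac12M_g^2\mu=\psi(x)-\nu_\mu$. Inserting both into the inequality from (ii) gives
\[
\psi(z)\ge\psi_\mu(z)\ge\psi(x)+\langle\nabla\psi_\mu(x),z-x\rangle-\tfrac{\varrho_\mu}{2}\|z-x\|^2-\nu_\mu\quad\forall z .
\]
By \eqref{eq-eps}, this is exactly $\nabla\psi_\mu(x)\in\partial^{\varrho_\mu}_{\nu_\mu}\psi(x)$.

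One point needs checking. Definition \eqref{eq-eps} requires $\psi$ itself to be $\varrho_\mu$-weakly convex. Since $\varrho_\mu\ge L_f+\varrho\|A\|^2=\widetilde\varrho$, this follows from Remark~\ref{rm-fgA-wc}, because weak convexity is monotone in the constant.

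The only mildly delicate step is (ii): we must make sure the inequality from Lemma~\ref{lm-wc-Moreau}(iv) is applied in the subgradient form \eqref{eq-wc-subd} with the gradient $\nabla g_\mu$, and then argue convexity from the supporting-minorant inequality. Everything else is routine.
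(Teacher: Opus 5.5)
Your proposal is correct and follows essentially the same route as the paper. You obtain (ii) from Lemma~\ref{lm-wc-Moreau}(iv) via the subgradient argument of Remark~\ref{rm-fgA-wc}, and (iii) from the same sandwich $\psi(z)\ge\psi_\mu(z)$, $\psi_\mu(x)\ge\psi(x)-\nu_\mu$ combined with the inequality from (ii); the only differences are cosmetic: you derive (i) directly from the chain rule and Lemma~\ref{lm-wc-Moreau}(i)--(ii) where the paper cites the variable smoothing reference, and you explicitly check that $\psi$ is $\varrho_\mu$-weakly convex (since $\varrho_\mu\ge L_f+\varrho\|A\|^2$), which the paper leaves implicit.
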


\begin{proof}
Item (i) follows from \cite[eq. (21)]{BW21}.
Item (ii) follows from Lemma~\ref{lm-wc-Moreau}(iv) together with an argument analogous to that in Remark~\ref{rm-fgA-wc}.

Finally, we prove item (iii). For any $x, z \in \mathbb{R}^n$,
\begin{align*}
    \psi(z) & \ge \psi_{\mu}(z) \ge \psi_{\mu}(x) + \langle \nabla \psi_{\mu}(x), z - x \rangle 
    - \frac{\varrho_{\mu}}{2} \| z - x \|^2
        \\ & \ge \psi(x) + \langle \nabla \psi_{\mu}(x), z - x \rangle 
          - \frac{ \varrho_{\mu}}{2} \| z - x \|^2 - \frac{1}{2} M_g ^2 \mu,
\end{align*}
where we use Lemma~\ref{lm-wc-Moreau}(v) to obtain the first and third inequalities, 
and use item (ii) to get the second inequality. 
This completes the proof.
\end{proof}

Algorithm~\ref{alg-vsm} below is adapted from the variable smoothing method proposed in \cite[Algorithm~1]{BW21}. The original algorithm employs the specific parameter choice of $r=1/3$, and its iteration complexity for finding an approximate stationary point was established in \cite[Theorem~4.1]{BW21}. Global sequential convergence has been recently established in \cite[Theorem~6.1]{26Tao} for $r\in (0.5,1]$ when $g$ is in addition convex based on the KL property of the function $\Psi_{q,\lambda}:\mathbb{R}^n\times \mathbb{R}\to (-\infty,\infty]$ defined as
\[
\Psi_{q,\lambda}(x,s) := \begin{cases}
\psi_{s^q}(x) + \lambda s^q & {\rm if}\ s > 0,\\
\psi(x)& {\rm if}\  s= 0,\\
\infty & {\rm otherwise},
\end{cases}
\]
for some $\lambda > 0$ and $q \ge 2$.

\begin{algorithm}[h]
\caption{Variable smoothing method for \eqref{opt-fgA}} 
\label{alg-vsm}
\begin{algorithmic}[0]
\setlength{\columnwidth}{\linewidth}
\State {\bf Require.} $r\in (0, 1)$, $x^{0}$, and the parameters $\varrho$ and $L_f$ from Assumption~\ref{ass-fgA}.
\For{$k=0,1,\ldots$} 
\State Set $\mu_k = (2\varrho)^{-1} (1+k)^{-r}$ and $\alpha_k = 1/L_{\mu_k}$, where $L_{\mu}$ is defined as in \eqref{eq-Lmu-vsm}.
\State Set $x^{k+1} = x^{k} - \alpha_{k} \nabla \psi_{\mu_k}(x^{k})$, 
where $\psi_{\mu}$ is defined in \eqref{eq-def-Psi-mu}.
\EndFor
\end{algorithmic}
\end{algorithm}

Here, we apply our convergence analysis framework to derive the global sequential convergence of $\{x^k\}$ when {\em $\psi$ is a KL function} under suitable choices of $r$, and discuss the {\em explicit local convergence rate} according to the KL exponent. We first present in the next lemma some basic properties of $\{x^k\}$ generated by Algorithm~\ref{alg-vsm}.
For the case $r=1/3$, Lemma~\ref{lm-vsm-Cs}(i) below was established in the proof of \cite[Theorem~4.1]{BW21}. The same argument extends readily to the case $r\in(0,1)$. For the sake of completeness and convenience of the readers, we include the details in the proof below.
\begin{lemma}\label{lm-vsm-Cs}
Consider \eqref{opt-fgA} and suppose that Assumption~\ref{ass-fgA} holds.
Let $r\in (0, 1)$ and let $\{x^{k}\}$ be generated by Algorithm~\ref{alg-vsm}.	
Then, the following statements hold.
\begin{enumerate}[{\rm (i)}]
\item We have for each $k\ge 0$,
\begin{equation}\label{des-fgA}
\psi_{\mu_{k+1}}(x^{k+1}) + M_{g}^2 \mu_{k+1} 
\le \psi_{\mu_{k}}(x^{k}) + M_{g}^2 \mu_{k}
- \frac{1}{2\alpha_k}  \| x^{k+1} - x^{k} \|^2.
\end{equation}

\item The sequence $\{x^k\} \subseteq \Omega_0$ is bounded.

\item For each $k\ge 0$, it holds that $\nabla \psi_{\mu_k}(x^{k+1}) \in \partial_{\nu_k}^{\widehat\varrho} \psi(x^{k+1})$, 
\begin{equation*}
\| \nabla \psi_{\mu_k}(x^{k+1})\| \le \frac{2}{\alpha_k} \| x^{k+1} - x^{k} \|
\quad \text{and} \quad \| \nabla \psi_{\mu_k}(x^{k+1})\| \le M_f + M_g \|A\|,
\end{equation*}
where $\widehat\varrho \coloneqq L_f + 2 \varrho \|A\|^2$, 
$M_f \coloneqq \sup\{ \| \nabla f(x) \| : x\in \Omega_0 \} < \infty$,
and $\nu_k \coloneqq \frac{1}{2}M_g^2 \mu_k$.
\end{enumerate}
\end{lemma}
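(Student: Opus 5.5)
Item (i) is a descent lemma for the gradient step on $\psi_{\mu_k}$ combined with a comparison between Moreau envelopes at consecutive smoothing parameters. Since $\alpha_k = 1/L_{\mu_k}$ and $\nabla\psi_{\mu_k}$ is $L_{\mu_k}$-Lipschitz (Lemma~\ref{lm-psi-mu}(i)), the standard descent inequality gives
\begin{equation*}
\psi_{\mu_k}(x^{k+1}) \le \psi_{\mu_k}(x^k) - \tfrac{1}{2\alpha_k}\|x^{k+1}-x^k\|^2 .
\end{equation*}
Here we use $x^{k+1}-x^k=-\alpha_k\nabla\psi_{\mu_k}(x^k)$, so that $\langle \nabla\psi_{\mu_k}(x^k),x^{k+1}-x^k\rangle + \frac{L_{\mu_k}}{2}\|x^{k+1}-x^k\|^2 = -\frac{1}{2\alpha_k}\|x^{k+1}-x^k\|^2$.

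Next, since $\mu_{k+1}<\mu_k<\varrho^{-1}$, Lemma~\ref{lm-wc-Moreau}(iii) applied at $y=Ax^{k+1}$ gives
\begin{equation*}
\psi_{\mu_{k+1}}(x^{k+1}) \le \psi_{\mu_k}(x^{k+1}) + \tfrac{\mu_k}{2\mu_{k+1}}M_g^2(\mu_k-\mu_{k+1}).
\end{equation*}
With $\mu_k=(2\varrho)^{-1}(1+k)^{-r}$ we have $\mu_k/\mu_{k+1}=((2+k)/(1+k))^r\le 2^r\le 2$. Hence the extra term is at most $M_g^2(\mu_k-\mu_{k+1})$. Combining the two displays and rearranging yields \eqref{des-fgA}.

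For item (ii), I will telescope \eqref{des-fgA}. This gives $\psi_{\mu_k}(x^k)\le \psi_{\mu_0}(x^0)+M_g^2\mu_0$. By Lemma~\ref{lm-wc-Moreau}(v) we have $\psi(x)-\frac12M_g^2\mu\le\psi_\mu(x)\le\psi(x)$. Therefore
\begin{equation*}
\psi(x^k)\le \psi(x^0)+M_g^2\mu_0+\tfrac12M_g^2\mu_k\le \psi(x^0)+\tfrac32M_g^2\mu_0 .
\end{equation*}
With $\mu_0=(2\varrho)^{-1}$ this bound is $\psi(x^0)+\frac34M_g^2\varrho^{-1}$. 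That exceeds the threshold $M_g^2(2\varrho)^{-1}$ defining $\Omega_0$, so a sharper accounting is needed. This constant bookkeeping is the main obstacle. I would first try dropping the $-\frac12M_g^2\mu_k$ slack more carefully: use $\psi_{\mu_k}(x^k)+M_g^2\mu_k\le\psi_{\mu_0}(x^0)+M_g^2\mu_0\le \psi(x^0)+M_g^2\mu_0$, together with $\psi(x^k)\le\psi_{\mu_k}(x^k)+\frac12M_g^2\mu_k$. These give $\psi(x^k)\le\psi(x^0)+M_g^2\mu_0-\frac12M_g^2\mu_k\le \psi(x^0)+M_g^2(2\varrho)^{-1}$, which is exactly the level in Assumption~\ref{ass-fgA}(iv). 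Thus $x^k\in\Omega_0$, and boundedness follows.

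For item (iii), the inclusion follows from Lemma~\ref{lm-psi-mu}(iii) with $\mu=\mu_k$. That lemma gives weak convexity modulus $\varrho_{\mu_k}=L_f+\frac{\varrho}{1-\varrho\mu_k}\|A\|^2$. Since $\mu_k\le(2\varrho)^{-1}$, we get $\varrho_{\mu_k}\le L_f+2\varrho\|A\|^2=\widehat\varrho$, and enlarging the modulus preserves membership in the $\varrho$-w.c.-$\varepsilon$-subdifferential. The tolerance is $\nu_k=\frac12M_g^2\mu_k$.

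For the first norm bound, write
\begin{equation*}
\nabla\psi_{\mu_k}(x^{k+1})=\bigl(\nabla\psi_{\mu_k}(x^{k+1})-\nabla\psi_{\mu_k}(x^k)\bigr)+\nabla\psi_{\mu_k}(x^k) .
\end{equation*}
The first difference has norm at most $L_{\mu_k}\|x^{k+1}-x^k\|$, and $\|\nabla\psi_{\mu_k}(x^k)\|=\alpha_k^{-1}\|x^{k+1}-x^k\|$. Since $L_{\mu_k}=\alpha_k^{-1}$, the total is at most $\frac{2}{\alpha_k}\|x^{k+1}-x^k\|$.

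For the second bound, use \eqref{eq-Lmu-vsm} and \eqref{nabl gmu}: $\|\nabla\psi_{\mu_k}(x^{k+1})\|\le\|\nabla f(x^{k+1})\|+\|A\|M_g$. Since $x^{k+1}\in\Omega_0$ by item (ii), $\|\nabla f(x^{k+1})\|\le M_f$. Finally, $M_f<\infty$ because $\nabla f$ is continuous and $\Omega_0$ is bounded.
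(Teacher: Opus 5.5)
Your proposal is correct and follows essentially the same route as the paper. For (i) you use the descent lemma with $\alpha_k=1/L_{\mu_k}$ plus Lemma~\ref{lm-wc-Moreau}(iii) with $\mu_k/\mu_{k+1}\le 2$. For (ii) your corrected accounting is exactly the paper's: the nonincreasing quantity $\psi_{\mu_k}(x^k)+M_g^2\mu_k$ combined with Lemma~\ref{lm-wc-Moreau}(v). Your initial $\tfrac32 M_g^2\mu_0$ bound was loose only because it discarded the $M_g^2\mu_k$ term. For (iii) you use Lemma~\ref{lm-psi-mu}(iii) with $\varrho_{\mu_k}\le\widehat\varrho$ and the triangle inequality. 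The one point worth stating explicitly is that $\psi$ is $\widehat\varrho$-weakly convex (by Remark~\ref{rm-fgA-wc}, since $\widehat\varrho\ge L_f+\varrho\|A\|^2$), so $\partial^{\widehat\varrho}_{\nu_k}\psi$ is well defined.
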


\begin{proof}
First, we consider item (i). 
Let $k\ge 0$.
Since $\mu_k\le 1/(2\varrho) < 1/\varrho$, we see from Lemma~\ref{lm-psi-mu}(i) that, 
\begin{align*}
    \psi_{\mu_k}(x^{k+1}) & \le \psi_{\mu_k}(x^{k}) 
    + \langle \nabla \psi_{\mu_k}(x^{k}), x^{k+1} - x^k \rangle 
    + \frac{L_{\mu_k}}{2} \| x^{k+1} - x^k \|^2
   \\ & =  \psi_{\mu_k}(x^{k}) - \frac{1}{2\alpha_k} \| x^{k+1} - x^k \|^2,
\end{align*}
where the equality holds because $\nabla \psi_{\mu_k}(x^{k}) = - \alpha_k^{-1} ( x^{k+1} - x^{k})$ 
and $\alpha_k = 1/L_{\mu_k}$.
 
On the other hand, using \eqref{gmu12} and the fact $\mu_{k}/ \mu_{k+1} = (\frac{2+k}{1+k})^{r} \le 2^r \le 2$, we have 
\begin{equation*}
    \psi_{\mu_{k+1}}(x^{k+1}) \le \psi_{\mu_k}(x^{k+1}) + M_g^2 (\mu_{k} - \mu_{k+1}).
\end{equation*}
Summing the last two displays and rearranging terms gives \eqref{des-fgA}. 

Next, we consider item (ii). 
From Lemma~\ref{lm-wc-Moreau}(v), it follows that for all $k\ge 0$,
\begin{equation*}
    \psi(x^k) \le \psi_{\mu_{k}}(x^{k}) + \tfrac{1}{2}M_g^2 \mu_k 
    \le \psi_{\mu_{0}}(x^{0}) + M_{g}^2 \mu_{0}
    = \psi_{\mu_{0}}(x^{0}) + M_{g}^2 (2\varrho)^{-1},
\end{equation*}
where the second inequality holds by the nonincreasing property of 
$\{\psi_{\mu_{k}}(x^{k}) + M_g^2 \mu_k\}$ (see \eqref{des-fgA}).
This together with Assumption~\ref{ass-fgA}(iv)   
implies that $\{x^{k}\} \subseteq \Omega_0$ is bounded.

Finally, we prove item (iii). 
By Lemma~\ref{lm-psi-mu}(iii), 
we have $\nabla \psi_{\mu_k}(x^{k+1}) \in \partial_{\nu_k}^{\varrho_k} \psi(x^{k+1})$, 
where $\varrho_k \coloneqq L_f + \frac{\varrho}{1 - \varrho \mu_k} \|A\|^2$.
Since $\mu_k \le \mu_0 = (2\varrho)^{-1}$, we have $\varrho_k \le L_f + 2 \varrho \|A\|^2 = \widehat \varrho$,  which implies that $\nabla \psi_{\mu_k}(x^{k+1}) \in \partial_{\nu_k}^{\widehat \varrho} \psi(x^{k+1})$.

Moreover, by the triangle inequality,
\begin{align*}
    \| \nabla \psi_{\mu_k}(x^{k+1}) \| 
    & \le \| \nabla \psi_{\mu_k}(x^{k}) \| + \| \nabla \psi_{\mu_k}(x^{k+1}) - \nabla \psi_{\mu_k}(x^{k}) \| 
    \\ & \le \| \nabla \psi_{\mu_k}(x^{k}) \| + L_{\mu_k} \| x^{k+1} - x^{k} \| 
    = \frac{2}{\alpha_k} \| x^{k+1} - x^{k} \|,
\end{align*}
where the second inequality follows from Lemma~\ref{lm-psi-mu}(i).
Furthermore, 
\begin{align*}
    \| \nabla \psi_{\mu_k}(x^{k+1}) \| 
    & = \| \nabla f(x^{k+1}) + A^* \nabla g_{\mu_k}(Ax^{k+1})\|
    \\ & \le \| \nabla f(x^{k+1})\|  + \|A\| \|\nabla g_{\mu_k}(Ax^{k+1})\|
    \le M_f + M_g \|A\|,
\end{align*}
where the second inequality holds because of \eqref{nabl gmu} and the definition of $M_f$, and $M_f$ is finite thanks to item (ii).
\end{proof}

We are now ready to establish the convergence of Algorithm~\ref{alg-vsm} with $r\in (2/3,1)$ by showing that the $\{x^k\}$ generated satisfies (C1), (C2) and (C3) with suitable parameters, and invoking Theorem~\ref{th-main}. To the best of our knowledge, this is the first global sequential convergence and local convergence rate results for variable smoothing methods under some nonconvex setting based on the KL property of $\psi$ in \eqref{opt-fgA}. 

\begin{theorem}[Global convergence of Algorithm~\ref{alg-vsm}]
Consider \eqref{opt-fgA} and suppose that Assumption~\ref{ass-fgA} holds.
Let $r\in (\frac{2}{3}, 1)$ and let $\{x^{k}\}$ be generated by Algorithm~\ref{alg-vsm}.
Then, the following statements hold.
\begin{enumerate}[{\rm (i)}]
    \item If $\psi$ is a KL function, then $\{x^{k}\}$ converges to a critical point $x^*$ of $\psi$
and it holds that $\sum_{k=0}^{\infty}\|x^{k+1} - x^{k}\| < + \infty.$

\item If $\psi$ is a KL function with exponent $\theta \in (0, 1)$, then
$\| x^{k} - x^*\| = \mathcal{O}(k^{-\widetilde{r}})$,
where
\begin{equation*}
\widetilde{r} \coloneqq \widetilde{r}(r) \coloneqq \begin{cases}
\tfrac{3}{2}r - 1 
& \text{if \ } \theta \in (0, \tfrac{1}{2}] , 
\\ \min\left\{\tfrac{3}{2}r - 1, 
\frac{(1- \theta) r}{2 \theta}, 
\frac{(1 - \theta) (1 - r) }{2 \theta - 1} \right\}
& \text{if \ } \theta \in (\tfrac{1}{2}, 1).
\end{cases}
\end{equation*}
\end{enumerate}
\end{theorem}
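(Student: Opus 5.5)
The plan is to verify (C1)--(C3) for $\{x^k\}$ from Algorithm~\ref{alg-vsm} with suitable parameters and then read off the conclusions from Theorem~\ref{th-main}, exactly as for Algorithm~\ref{alg-qpen-pg}.

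\textbf{Parameter choices and (C1)--(C3).} Since $\psi$ is $\widetilde\varrho$-weakly convex by Remark~\ref{rm-fgA-wc} and $\widehat\varrho \coloneqq L_f+2\varrho\|A\|^2\ge\widetilde\varrho$, $\psi$ is also $\widehat\varrho$-weakly convex, so we work with $\widehat\varrho$. Set
\[
\hat\psi_k \coloneqq \psi_{\mu_k}(x^k)+M_g^2\mu_k,\qquad
\beta_k \coloneqq M_g^2\mu_k,\qquad
\gamma_k\coloneqq 0,\qquad
\nu_k\coloneqq \tfrac12 M_g^2\mu_k,\qquad
\xi^{k+1}\coloneqq\nabla\psi_{\mu_k}(x^{k+1}).
\]
\begin{itemize}
\item[(C1)] The descent inequality \eqref{eq-f-beta-des} holds with $a_1=\frac12$ by \eqref{des-fgA}. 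By Lemma~\ref{lm-wc-Moreau}(v), $\psi(x^k)-\tfrac12M_g^2\mu_k\le\psi_{\mu_k}(x^k)\le\psi(x^k)$, so $|\psi(x^k)-\hat\psi_k|\le M_g^2\mu_k=\beta_k$.
\item[(C2)] Lemma~\ref{lm-vsm-Cs}(iii) gives $\xi^{k+1}\in\partial^{\widehat\varrho}_{\nu_k}\psi(x^{k+1})$, which lies in $\partial^{\widehat\varrho}_{\widetilde\nu_k}\psi(x^{k+1})$ with $a_2=0$ by monotonicity of $\varepsilon$-subdifferentials in $\varepsilon$. The same lemma gives the bounds with $a_3=2$ and $M_\psi=M_f+M_g\|A\|$.
\item[(C3)] $\{x^k\}$ is bounded by Lemma~\ref{lm-vsm-Cs}(ii), and $\psi$ is continuous and real-valued, so \eqref{eq-continuity} is immediate.
\end{itemize}

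\textbf{Rates of the parameters.} This is the only step needing care. Since $\mu_k=\Theta(k^{-r})$, we have $L_{\mu_k}=L_f+\|A\|^2\max\{\mu_k^{-1},\varrho/(1-\varrho\mu_k)\}$. For large $k$ the maximum equals $\mu_k^{-1}=2\varrho(1+k)^r$, because $\varrho/(1-\varrho\mu_k)\le2\varrho$. Hence $\alpha_k=1/L_{\mu_k}=\Theta(k^{-r})$, i.e.\ $r_\alpha=r\in[0,1)$. Also $\beta_k,\nu_k=\mathcal O(k^{-r})$, and $\gamma_k=0=\mathcal O(k^{-s})$ for any $s>0$; take $r_\gamma\ge r$.

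Then, for every $\tau\ge2$,
\[
r_\tau=\min\Bigl\{r_\gamma,\tfrac{\tau-1}{\tau}r,\tfrac12r\Bigr\}=\tfrac r2 ,
\]
so $r_\alpha+r_\tau-1=\tfrac32r-1>0$ exactly because $r>\tfrac23$. Thus (A\ref{(B)}) holds with $\widetilde\tau=2$.

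\textbf{Conclusions.} Item (i) follows from Theorem~\ref{th-main}(i). Item (ii) with $\theta\in(0,\frac12]$ follows from Theorem~\ref{th-main}(iii), where $\widetilde r=r+\min\{r_\gamma,\frac r2,\frac r2\}-1=\frac32r-1$. For $\theta\in(\frac12,1)$, apply Theorem~\ref{th-main}(iv) with $\tau=2$. Then $\theta_2=\max\{\theta,\frac12\}=\theta$ and $r_2=r/2$, so
\[
\widetilde r_2=\min\Bigl\{\tfrac32r-1,\ \tfrac{(1-\theta)r}{2\theta},\ \tfrac{(1-\theta)(1-r)}{2\theta-1}\Bigr\},
\]
which matches the stated rate.
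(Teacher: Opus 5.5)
Your proposal is correct and follows the paper's proof essentially verbatim. It uses the same $\widehat\varrho$, $\hat\psi_k$, $\gamma_k$, $\nu_k$ and $\xi^{k+1}$, verifies (C1)--(C3) via Lemma~\ref{lm-vsm-Cs} in the same way, and applies Theorem~\ref{th-main}(iii)--(iv) with $\tau=\widetilde\tau=2$. The only differences are cosmetic: your choice $\beta_k=M_g^2\mu_k$ is a valid, slightly tighter bound than the paper's $\frac32 M_g^2\mu_k$, and the identity $L_{\mu_k}=L_f+\|A\|^2\mu_k^{-1}$ actually holds for every $k\ge 0$ (since $\mu_k\le(2\varrho)^{-1}$), not just for large $k$.
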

\begin{proof}
Let $\widehat \varrho \coloneqq L_f + 2\varrho \|A\|^2$.
Then $\psi$ is $\widehat \varrho$-weakly convex by Remark~\ref{rm-fgA-wc}. 
Let $\hat{\psi}_k \coloneqq \psi_{\mu_k}(x^{k}) + M_g^2 \mu_k$, 
$\beta_k \coloneqq \frac{3}{2}M_g^2 \mu_k$, $\gamma_k \coloneqq 0$, 
$\nu_k \coloneqq \tfrac{1}{2}M_g^2\mu_k$ and $\xi^{k+1} \coloneqq \nabla \psi_{\mu_k}(x^{k+1})$ for all $k\ge 0$.
Now, by the definition of $\hat \psi^k$ and Lemma~\ref{lm-wc-Moreau}(v), 
we have 
\[
| \psi(x^k) - \hat{\psi}_k| \le \beta_k\ \ \ \forall k\ge 0.
\]
Then the sequence $\{x^k\}$ satisfies {\rm (C\ref{C1})} by the above display and \eqref{des-fgA}, 
satisfies {\rm (C\ref{C2})} by Lemma~\ref{lm-vsm-Cs}(iii), and satisfies {\rm (C\ref{C3})} by Lemma~\ref{lm-vsm-Cs}(ii) and the continuity of $\psi$.

For each $k\ge 0$, using the fact that $\mu_k \le \mu_0 = (2\varrho)^{-1}$, we deduce from the definition of $L_{\mu_k}$ in \eqref{eq-Lmu-vsm} that $L_{\mu_k} = L_f + \|A\|^2 \mu_k^{-1}$.
Since $\mu_k = \Theta(k^{-r})$, we obtain further that
$\alpha_k = 1/L_{\mu_k} = \Theta(k^{-r_{\alpha}})$,
$\beta_k = \mathcal{O}(k^{-r_{\beta}})$,
$\gamma_k = \mathcal{O}(k^{-r_{\gamma}})$ and 
$\nu_k = \mathcal{O}(k^{-r_{\nu}})$, 
with $r_{\alpha} = r_{\beta} = r_{\gamma} = r_{\nu} = r$.
Then 
\[
r_{\tau} = \min\left\{r_{\gamma}, \frac{\tau - 1}{\tau} r_{\beta}, \frac{1}{2} r_{\nu}\right\} = \frac{1}{2}r
\ {\rm and}\  r_{\alpha} + r_{\tau} - 1 = \frac{3}{2}r-1 > 0\ \ \ \forall \tau \ge 2.
\]
Therefore, the parameters $\{\alpha_k\}$, $\{\beta_{k}\}$, $\{\nu_k\}$ and $\{\gamma_{k}\}$ satisfy condition {\rm (A\ref{(B)})} of Theorem~\ref{th-main} with $\widetilde{\tau} =2$.
Then, item (i) follows from Theorem~\ref{th-main} directly, 
and item (ii) holds upon setting $\tau = \widetilde{\tau} = 2$ in Theorem~\ref{th-main}(iii) and (iv).
\end{proof}

\section{Proof of Theorem~\ref{th-main}}\label{sec4}
This section is devoted to proving Theorem~\ref{th-main}. We first establish in Proposition~\ref{prop-paras} some auxiliary properties of  the parameters $\{\alpha_k\}$, $\{\beta_{k}\}$, $\{\nu_k\}$ and $\{\gamma_{k}\}$ when either condition {\rm (A\ref{(A)})} or {\rm (A\ref{(B)})} in Theorem~\ref{th-main} holds. For ease of reference in our subsequent analysis, we collect some of these essential auxiliary properties as Property~\ref{ass-param}.

\begin{property}
\label{ass-param}
The following statements hold.
\begin{enumerate}[{\rm (i)}]
\item The positive sequence $\{\alpha_{k}\}$ satisfies $\sum_{l=0}^{\infty} \alpha_l = \infty$,
\begin{equation}\label{eq-alpha-bound}
\alpha_{k} \le \bar{\alpha} \quad \mbox{and} \quad 
\alpha_{k+1} \le M_{\alpha} \alpha_{k}
\quad \forall k\in \N_{0},
\end{equation}
for some constants $\bar{\alpha}>0$ and $M_{\alpha} > 0$.

\item The nonnegative sequences 
$\{\beta_{k}\}$, $\{\nu_k\}$ and $\{\gamma_{k}\}$ converge to $0$.			

\item There exists $\tau\ge 2$ such that
\begin{equation}\label{eq-sum-bdd}
\sum_{k=0}^{\infty} \alpha_{k} A_{k} < \infty
\quad \text{with} \quad
A_{l} \coloneqq  \gamma_{l}
+ \beta_{l+1}^{\frac{\tau-1}{\tau}} + \nu_{l}^{\frac{1}{2}}
\quad \forall l \in \N_{0}.
\end{equation}
\end{enumerate}
\end{property}

\begin{proposition}[Auxiliary properties of $\{\alpha_k\}$, $\{\beta_{k}\}$, $\{\nu_k\}$ and $\{\gamma_{k}\}$]\label{prop-paras}
	Let $\{\alpha_{k}\}$ be a sequence of positive numbers.
	Let $\{\beta_{k}\}$, $\{\nu_k\}$ and $\{\gamma_{k}\}$ be sequences of nonnegative numbers.
    Suppose that either Theorem~\ref{th-main}{\rm (A\ref{(A)})} or {\rm (A\ref{(B)})} holds.
	Then, Property~\ref{ass-param} holds.
    Moreover, the following statements hold.
	\begin{enumerate}
		[{\rm (i)}]
		\item If Theorem~\ref{th-main}{\rm (A\ref{(A)})} holds, 
        then there exists $q\in (0,1)$ such that for any $\tau \ge 2$, 
        the following estimates hold:
		$$
		A_k = \mathcal{O}(q^{k})\quad \text{and} \quad\sum_{l = k}^{\infty} \alpha_{l-1} A_{l-1} = \mathcal{O}(q^{k}).
		$$		
		\item If Theorem~\ref{th-main}{\rm (A\ref{(B)})} holds, 
        then for any $\tau\in [\widetilde{\tau}, \infty)$, 
        the following estimates hold: 
        $$
		A_{k} = \mathcal{O}(k^{-r_{\tau}}) 
		\quad \text{and} \quad
		\sum_{l = k}^{\infty} \alpha_{l-1} A_{l-1} = \mathcal{O}(k^{-(r_{\alpha} + r_{\tau} - 1)}).
		$$ 
	\end{enumerate}
\end{proposition}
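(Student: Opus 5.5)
The argument is a direct verification from the asymptotic assumptions, handled separately under (A1) and (A2). Throughout I use two elementary facts: $\sum_{l\ge k} l^{-s} = \mathcal{O}(k^{1-s})$ for $s>1$, and $\sum_{l\ge k} q^l = \mathcal{O}(q^k)$ for $q\in(0,1)$. I also use that $\Theta(\cdot)$ gives two-sided bounds $c_1 k^{-r_\alpha}\le \alpha_k\le c_2 k^{-r_\alpha}$ for $k\ge1$. Finitely many initial terms only affect constants; in particular $k=0$ is absorbed by positivity of $\alpha_0$, and the shift $l-1$ in the tail sums is handled by $(l-1)^{-s}\le 2^s l^{-s}$ for $l\ge2$.

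\textbf{Case (A1).} For Property~\ref{ass-param}(i), $\alpha_k=\Theta(1)$ gives constants $0<\underline c\le\alpha_k\le\bar\alpha$. Hence $\sum_l\alpha_l=\infty$, and $\alpha_{k+1}\le(\bar\alpha/\underline c)\alpha_k$, so one can take $M_\alpha=\bar\alpha/\underline c$. Item (ii) of the property is immediate from the geometric decay. For item (iii) and for Proposition~\ref{prop-paras}(i), fix any $\tau\ge2$. Then $\beta_{k+1}^{(\tau-1)/\tau}=\mathcal{O}\bigl(q_\beta^{(\tau-1)k/\tau}\bigr)$, and since $(\tau-1)/\tau\ge\tfrac12$ this is $\mathcal{O}(q_\beta^{k/2})$. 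Likewise $\nu_k^{1/2}=\mathcal{O}(q_\nu^{k/2})$. Setting $q:=\max\{q_\gamma,q_\beta^{1/2},q_\nu^{1/2}\}\in(0,1)$, which does not depend on $\tau$, gives $A_k=\mathcal{O}(q^k)$. Boundedness of $\alpha$ and the geometric tail estimate then give $\sum_{l\ge k}\alpha_{l-1}A_{l-1}=\mathcal{O}(q^{k})$. In particular the full series is finite, which is \eqref{eq-sum-bdd}.

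\textbf{Case (A2).} For Property~\ref{ass-param}(i), $r_\alpha\in[0,1)$ gives $\alpha_k\ge c_1k^{-r_\alpha}$, which is not summable. Boundedness of $\alpha_k$ follows from $r_\alpha\ge0$. The ratio bound follows from
\[
\alpha_{k+1}/\alpha_k\le (c_2/c_1)\bigl(k/(k+1)\bigr)^{r_\alpha}\le c_2/c_1
\]
for $k\ge1$, with $k=0$ handled separately. Item (ii) holds since every exponent is positive. For Proposition~\ref{prop-paras}(ii), take $\tau\ge\widetilde\tau$. Each summand of $A_k$ is bounded termwise: $\gamma_k=\mathcal{O}(k^{-r_\gamma})$, $\beta_{k+1}^{(\tau-1)/\tau}=\mathcal{O}\bigl(k^{-\frac{\tau-1}{\tau}r_\beta}\bigr)$ and $\nu_k^{1/2}=\mathcal{O}(k^{-r_\nu/2})$. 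Each of these is $\mathcal{O}(k^{-r_\tau})$ by the definition of $r_\tau$ as a minimum, so $A_k=\mathcal{O}(k^{-r_\tau})$. It follows that $\alpha_{l-1}A_{l-1}=\mathcal{O}(l^{-(r_\alpha+r_\tau)})$.

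The key point of this case is that $r_\alpha+r_\tau>1$ for every $\tau\ge\widetilde\tau$, not merely for $\widetilde\tau$. This holds because $\tau\mapsto(\tau-1)/\tau$ is nondecreasing, so $r_\tau\ge r_{\widetilde\tau}$, and the hypothesis gives $r_\alpha+r_{\widetilde\tau}>1$. With this monotonicity in hand, the tail estimate yields
\[
\sum_{l\ge k}\alpha_{l-1}A_{l-1}=\mathcal{O}\bigl(k^{-(r_\alpha+r_\tau-1)}\bigr).
\]
Finiteness of this tail with $\tau=\widetilde\tau\ge2$ then gives Property~\ref{ass-param}(iii).

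The only mildly delicate points are this monotonicity of $r_\tau$ in $\tau$ and the index bookkeeping: the terms $\beta_{l+1}$ and the shifted products $\alpha_{l-1}A_{l-1}$ must be handled near $l=0,1$, and all of this is absorbed into the constants.
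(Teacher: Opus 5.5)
Your proposal is correct and follows essentially the same route as the paper's proof. It splits into the (A1) and (A2) cases, uses two-sided bounds on $\alpha_k$ with $M_\alpha$ a ratio of constants, takes the rate $q=\max\{q_\gamma,q_\beta^{1/2},q_\nu^{1/2}\}$ independent of $\tau$ under (A1), and uses termwise polynomial bounds together with $r_\alpha+r_\tau\ge r_\alpha+r_{\widetilde\tau}>1$ under (A2). The only differences are cosmetic: you index with $k^{-r_\alpha}$ for $k\ge 1$ rather than $(1+k)^{-r_\alpha}$ for $k\ge 0$, and you state explicitly the monotonicity of $\tau\mapsto r_\tau$, which the paper uses implicitly.
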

\begin{proof}
	In both settings, Property~\ref{ass-param}(ii) holds trivially under either condition {\rm (A\ref{(A)})} or {\rm (A\ref{(B)})} in Theorem~\ref{th-main}. 
	Thus, we only need to show both Property~\ref{ass-param}(i) and (iii) hold under {\rm (A\ref{(A)})} or {\rm (A\ref{(B)})} in Theorem~\ref{th-main}. We divide our discussion into two cases.
	
	\underline{Case 1}: Suppose that Theorem~\ref{th-main}{\rm (A\ref{(A)})} holds.
    Since $\alpha_{k} = \Theta(1)$ and $\{\alpha_k\}\subset\R_{++}$, there exist $0<\underline{\alpha}<\bar{\alpha}$ such that $\underline{\alpha}\le \alpha_{k} \le \bar{\alpha}$ for all $k\ge 0$. 
	Then $\alpha_{k+1} \le \bar{\alpha} = M_{\alpha} \underline{\alpha}  \le M_{\alpha} \alpha_{k}$ for all $k\ge 0$, 
	where $M_{\alpha} \coloneqq {\bar \alpha}/\underline{\alpha}$.
	Clearly, $\sum_{k=0}^{\infty} \alpha_{k} = \infty$.
	This shows that Property~\ref{ass-param}(i) holds.
	
	Next, for any $\tau \ge 2$, by definition, we have
	\begin{equation*}
		A_{k} = \gamma_k + \beta_{k+1}^{\frac{\tau - 1}{\tau}} + \nu_{k}^{\frac{1}{2}}
		=  \mathcal{O}(q_{\gamma}^{k} + q_{\beta}^{\frac{\tau - 1}{\tau}k} + q_{\nu}^{\frac{k}{2}} )
        = \mathcal{O}(q_{\gamma}^{k} + q_{\beta}^{\frac{k}{2}} + q_{\nu}^{\frac{k}{2}} )
		= \mathcal{O}(q_{A}^{k}),
	\end{equation*}
	where $q_{A} \coloneqq \max\{q_{\gamma}, q_{\beta}^{\frac12}, q_{\nu}^{\frac{1}{2}} \} \in (0, 1)$.
	Thus, there exists $M_1>0$ such that $A_{k} \le M_1 q_{A}^{k}$ for all $k\ge 0$.
	This further implies that $\sum_{k=0}^{\infty} \alpha_{k} A_{k} \le \bar{\alpha} M_1 \sum_{k=0}^{\infty} q_A^{k} < \infty$ and 
	\begin{equation*}
		\sum_{l = k}^{\infty} \alpha_{l-1} A_{l-1} \le 
		\bar{\alpha} M_1 \sum_{l = k}^{\infty} q_{A}^{l-1} 
		=  \frac{\bar{\alpha} M_1 q_{A}^{k-1}}{1 - q_{A}}.
	\end{equation*}

	\underline{Case 2}: Suppose that Theorem~\ref{th-main}{\rm (A\ref{(B)})} holds. 
    Then there exist $M_5>M_4>0$ such that $M_4(1+k)^{-r_{\alpha}} \le \alpha_k \le M_5 (1+k)^{-r_{\alpha}}$ for all $k\ge 0$.
	Thus, we have, for all $k\ge 0$,
	$$\alpha_{k+1} \le M_5 (2+k)^{-r_{\alpha}} \le M_5 (1+k)^{-r_{\alpha}} \le (M_5/M_4) \alpha_{k} \eqqcolon M_{\alpha} \alpha_{k}.
	$$
	Moreover, $\alpha_{k} \le M_5 (1+k)^{-r_{\alpha}}  \le M_5 \eqqcolon \bar{\alpha}$ for all $k\ge 0$.
	Next, since $r_{\alpha} \in [0, 1)$, 
    we have $\sum_{k=0}^{\infty}(1+k)^{-r_{\alpha}} = \infty$, which implies that $\sum_{k=0}^{\infty}\alpha_k = \infty.$
	Therefore, Property~\ref{ass-param}(i) holds.
	
	Finally, let $\tau\in [\widetilde{\tau}, \infty)$. By definition, we have  
	\begin{equation*}
		A_{k} = \gamma_{k}
		+ \beta_{k+1}^{\frac{\tau-1}{\tau}} + \nu_{k}^{\frac{1}{2}}
		= \mathcal{O}( k^{-r_{\gamma}} + k^{-\frac{\tau-1}{\tau}r_{\beta}} + k^{-\frac{1}{2}r_{\nu}}) 
		= \mathcal{O}(k^{-r_{\tau}}),
	\end{equation*}
	where the last equality follows from the definition of $r_{\tau}$.
	This implies that $\alpha_{k}A_{k} = \mathcal{O}(k^{-(r_{\tau}+r_{\alpha})})$.
	Since $r_{\alpha} + r_{\tau} \ge r_{\alpha} + r_{\widetilde\tau} >1$, 
    we conclude that $\sum_{k=0}^{\infty}\alpha_k A_k < \infty$ and $\sum_{l = k}^{\infty} \alpha_{l-1} A_{l-1}  = \mathcal{O}(k^{-(r_{\alpha} + r_{\tau} - 1)})$.	
\end{proof}

The next lemma concerns the set of accumulation points of $\{x^k\}$.

\begin{lemma}[Cluster points of $\{x^k\}$]\label{lm-basic}
Let $\psi:\mathbb{X}\to (-\infty, \infty]$ be a proper, closed, 
and $\varrho$-weakly convex function with constant $\varrho\ge 0$. 
Suppose that Property~\ref{ass-param}(i) and (ii) hold.
Let $\{x^{k}\}$ satisfy {\rm (C\ref{C1})}, {\rm (C\ref{C2})} and {\rm (C\ref{C3})}.
Let $\overline{\Omega}$ be the set of accumulation points of $\{x^k\}$. 
Then the following statements hold.
\begin{enumerate}[{\rm (i)}]
\item The set $\overline{\Omega}$ is nonempty and compact, 
and $\psi$ is finite and constant on $\overline{\Omega}$.
Moreover, 
\begin{equation}\label{eq-value-lim}
\overline{\psi} \coloneqq \lim_{k\to \infty} \hat{\psi}_{k} =
\lim_{k\to \infty}\psi(x^k) 
=  \psi(\bar{x}) \in \mathbb{R} \quad \forall \bar{x}\in \overline{\Omega}.
\end{equation}

\item  $\lim_{k\to\infty} \|x^{k+1} - x^{k}\| = \lim_{k\to\infty}\widetilde{\nu}_{k} = 0$, 
where $\{\widetilde{\nu}_k\}$ is given in \eqref{eq-nu-tilde}.

\item $\liminf_{k\to \infty} \|\xi^{k+1}\| = 0$.

\item  $\overline{\Omega}\subseteq \dom \partial \psi$, and $\overline{\Omega}$ contains a critical point of $\psi$.
\end{enumerate}
\end{lemma}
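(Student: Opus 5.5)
The plan is to first obtain convergence of the surrogate values $\hat\psi_k$ and summability of the weighted step lengths, and then transfer these facts to $\psi$ and to $\xi^{k+1}$.

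\emph{Step 1: monotonicity and the limit $\overline{\psi}$.} By \eqref{eq-f-beta-des}, $\{\hat\psi_k\}$ is nonincreasing. By (C\ref{C3}), $\{x^k\}$ is bounded, so it has a convergent subsequence $x^{k_j}\to\bar x$. Condition \eqref{eq-continuity} gives $\psi(x^{k_j})\to\psi(\bar x)\in\R$. Combined with \eqref{eq-f-psi} and $\beta_k\to0$ (Property~\ref{ass-param}(ii)), this yields $\hat\psi_{k_j}\to\psi(\bar x)$. Monotonicity then implies that the whole sequence $\hat\psi_k$ converges to some $\overline\psi\in\R$. Using $\beta_k\to0$ once more, $\psi(x^k)\to\overline\psi$. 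For any $\bar x\in\overline\Omega$, \eqref{eq-continuity} identifies $\psi(\bar x)=\overline\psi$. Nonemptiness and compactness of $\overline\Omega$ are standard for bounded sequences (closed and bounded). This proves (i).

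\emph{Step 2: vanishing steps and $\widetilde\nu_k\to0$.} Summing \eqref{eq-f-beta-des} gives $\sum_k \alpha_k^{-1}\|x^{k+1}-x^k\|^2\le \hat\psi_0-\overline\psi<\infty$. Since $\alpha_k\le\bar\alpha$, we get $\|x^{k+1}-x^k\|^2\le \bar\alpha\,\alpha_k^{-1}\|x^{k+1}-x^k\|^2\to0$. Also $\hat\psi_k-\hat\psi_{k+1}\to0$ and $\nu_k\to0$, so $\widetilde\nu_k\to0$ by \eqref{eq-nu-tilde}. This proves (ii).

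\emph{Step 3: $\liminf\|\xi^{k+1}\|=0$.} This is the main point, since $\alpha_k$ may tend to zero and so $\alpha_k^{-1}\|x^{k+1}-x^k\|$ need not vanish. I would argue by contradiction. Suppose $\|\xi^{k+1}\|\ge 2\epsilon>0$ for all large $k$. Since $\gamma_k\to0$, \eqref{eq-C2} gives $\frac{a_3}{\alpha_k}\|x^{k+1}-x^k\|\ge\epsilon$ for large $k$ (which forces $a_3>0$). Squaring,
\[
\frac{1}{\alpha_k}\|x^{k+1}-x^k\|^2\ \ge\ \frac{\epsilon^2}{a_3^2}\,\alpha_k .
\]
Summing and using $\sum_k\alpha_k=\infty$ (Property~\ref{ass-param}(i)) contradicts the summability established in Step 2. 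This proves (iii).

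\emph{Step 4: critical points.} Take a subsequence along which $\|\xi^{k_j+1}\|\to0$. By boundedness, pass to a further subsequence with $x^{k_j+1}\to\bar x\in\overline\Omega$. Apply Lemma~\ref{lm-BR} with $h=\psi$, $x=x^{k_j+1}$, $\xi=\xi^{k_j+1}$ and $\varepsilon=\widetilde\nu_{k_j}$. It produces points $z^j$ with $\|z^j-x^{k_j+1}\|\le\widetilde\nu_{k_j}^{1/2}\to0$ and subgradients $\zeta^j\in\partial\psi(z^j)$ with $\zeta^j\to0$. By \eqref{eq-f-x-z}, using the bound $\|\xi\|\le M_\psi$, we have $|\psi(z^j)-\psi(x^{k_j+1})|\to0$, and hence $\psi(z^j)\to\overline\psi=\psi(\bar x)$. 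Closedness of the limiting subdifferential then gives $0\in\partial\psi(\bar x)$.

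For arbitrary $\bar x\in\overline\Omega$, say $x^{k_j}\to\bar x$, the same construction applied to $\xi^{k_j}$ (bounded by $M_\psi$, so convergent along a subsequence) produces a limiting subgradient of $\psi$ at $\bar x$. Here $\psi(z^j)\to\psi(\bar x)$ follows from \eqref{eq-continuity} together with \eqref{eq-f-x-z}. This shows $\overline\Omega\subseteq\dom\partial\psi$ and proves (iv).
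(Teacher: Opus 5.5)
Your proof is correct. Parts (i)--(iii) follow the paper's argument essentially step for step; in Step~2 the bound should read $\sum_k \alpha_k^{-1}\|x^{k+1}-x^k\|^2\le (\hat\psi_0-\overline{\psi})/a_1$, a harmless slip.

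The genuine difference is in (iv). The paper never leaves the point $x^{k_j+1}$. It writes out the defining inequality \eqref{eq-eps} of $\xi^{k_j+1}\in\partial^{\varrho}_{\widetilde\nu_{k_j}}\psi(x^{k_j+1})$ for every $y$. It then lets $j\to\infty$, using $\widetilde\nu_{k_j}\to0$, $\xi^{k_j+1}\to\bar\xi$ and $\psi(x^{k_j+1})\to\psi(\bar x)$, and reads off $\bar\xi\in\partial\psi(\bar x)$ directly from the quadratic-minorant characterization \eqref{eq-wc-subd}.

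You instead use Lemma~\ref{lm-BR} to trade the approximate subgradient for an exact one, $\zeta^j\in\partial\psi(z^j)$, at a nearby point. This obliges you to prove $\psi$-attentive convergence $\psi(z^j)\to\psi(\bar x)$ via \eqref{eq-f-x-z} and the bound $M_\psi$. You then need outer semicontinuity of the limiting subdifferential, which holds either by the standard diagonal argument or because $\partial\psi=\widehat\partial\psi$ for weakly convex $\psi$ by \eqref{eq-wc-subd}.

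The paper's route is shorter and uses less. When passing to the limit in that inequality, only $\liminf_j\psi(x^{k_j+1})\ge\psi(\bar x)$, i.e., lower semicontinuity, is actually needed. Your route is exactly the mechanism the paper uses later in the proof of Lemma~\ref{lm-kl-x}. There, exact subgradients at nearby points are unavoidable because the KL inequality is stated in terms of $\partial\psi$.
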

\begin{proof}
(i) Because $\{x^k\}$ is bounded (see {\rm (C\ref{C3})}), we have that $\overline{\Omega}$ is nonempty and compact.
Let $\bar{x} \in \overline{\Omega}$.
Then there exists a subsequence $\{x^{k_j}\}$ such that $\lim_{j\to \infty}x^{k_j} = \bar{x}$ and it holds that $\psi(\bar x)\in \mathbb{R}$ by (C3). 
Moreover, we have
\begin{align*}
& |\hat{\psi}_{k_j} - \psi(\bar{x})| 
\le |\hat{\psi}_{k_j} - \psi(x^{k_j})| + |\psi(x^{k_j}) - \psi({\bar x})| 
\\ &\le \beta_{k_j} + |\psi(x^{k_j}) - \psi({\bar x})| \to 0 
\quad \mbox{as}\ \ j \to \infty,
\end{align*}
where the second inequality holds thanks to \eqref{eq-f-psi}, 
and the limit holds because of \eqref{eq-continuity} and the fact that $\lim_{k\to\infty} \beta_k = 0$ (see Property~\ref{ass-param}(ii)).
Since $\{\hat{\psi}_{k}\}$ is nonincreasing (see \eqref{eq-f-beta-des}), we know that $\lim_{k\to\infty}\hat{\psi}_k$ exists and
the last display implies that 
$\psi(\bar{x}) = \lim_{k\to\infty}\hat{\psi}_k$.
This, together with \eqref{eq-f-psi}, proves \eqref{eq-value-lim}.

(ii) Let $k\in \mathbb{N}_0$. 
Using \eqref{eq-f-beta-des}, we have
\begin{align*}
\frac{1}{\alpha_{k}}\|x^{k+1} - x^{k} \|^2 
& \le 
\frac{\hat{\psi}_k - \hat{\psi}_{k+1}}{a_1}
\le 
\frac{\hat{\psi}_k - \overline{\psi}}{a_1}
\to 0 \quad \mbox{as}\ \ k \rightarrow \infty,
\end{align*}
where we have used the monotonicity of $\{\hat{\psi}_k\}$ (see \eqref{eq-f-beta-des}) and \eqref{eq-value-lim} for the second inequality. 
This, together with \eqref{eq-alpha-bound}, 
implies that $\lim_{k\to \infty}\|x^{k+1} - x^{k} \| = 0$.
Furthermore, using \eqref{eq-value-lim}, $\lim_{k\to \infty}\nu_k = 0$  (see Property~\ref{ass-param}(ii)) 
and the definition of $\{\widetilde\nu_k\}$, 
we obtain that $\lim_{k\to \infty}\widetilde{\nu}_k = 0$.

(iii) Suppose to the contrary that there exist
$\varepsilon>0$ and  $N_1 \in \N_{0}$ such that
$\|\xi^{k+1}\| \ge \varepsilon$ for all $k \ge N_1$.
Since $\lim_{k\to\infty} \gamma_{k} = 0$ thanks to Property~\ref{ass-param}(ii), there exists 
$N_2 \in \N_{0}$ such that
$\gamma_{k} \le \varepsilon/2$ for all $k \ge N_2$.
It then follows from \eqref{eq-C2} that
\begin{equation*}
\frac{a_{3}}{\alpha_{k}} \|x^{k+1} - x^{k}\| \ge  \|\xi^{k+1}\| - \gamma_{k} \ge \frac{\varepsilon}{2} \quad \forall k \ge N \coloneqq \max\{N_1, N_2\}.
\end{equation*}
This yields that
\begin{equation*}
\frac{a_1\varepsilon^2}{4} \alpha_{k}
\le \frac{a_{1}a_{3}^2}{\alpha_k} \| x^{k+1} - x^{k} \|^2
\le a_{3}^2(\hat{\psi}_k - \hat{\psi}_{k+1}) \quad \forall k\ge N,
\end{equation*}
where the last inequality holds thanks to \eqref{eq-f-beta-des}.
Summing the last display from $k = N$ to $\infty$, and invoking \eqref{eq-value-lim}, we obtain
\begin{equation*}
\frac{a_1\varepsilon^2}{4} \sum_{k=N}^{\infty}\alpha_{k}
\le a_{3}^2(\hat{\psi}_N - \overline\psi)
< \infty,
\end{equation*}
which contradicts $\sum_{k=0}^{\infty} \alpha_k = \infty$ 
(see Property~\ref{ass-param}(i)).

(iv) Notice from {\rm (C\ref{C2})} that
$\xi^{k+1} \in \partial^{\varrho}_{\widetilde{\nu}_{k}} \psi (x^{k +1})$ for all $k\ge 0$ and $\{\xi^{k+1}\}$ is bounded. 

We first show that $\overline{\Omega}\subseteq \dom \partial\psi$. Let $\bar x\in \overline{\Omega}$. Then there exists a subsequence $\{x^{k_j}\}$ such that $\lim_{j\to\infty} x^{k_j+1} = \bar{x}$. Using this and {\rm (C\ref{C3})}, we also have
$\lim_{j\to \infty} \psi(x^{k_j+1}) = \psi(\bar{x}) \in \R$. Next, since $\{\xi^{k+1}\}$ is bounded, by passing to a further subsequence if necessary, we may assume that $\lim_{j\to\infty} \xi^{k_j+1} = \bar{\xi}$ for some $\bar{\xi}$. Since we have $\lim_{k\to \infty}\widetilde{\nu}_{k} = 0$ from item (ii), we can now deduce upon passing to the limit in $\xi^{k_j+1} \in \partial^{\varrho}_{\widetilde{\nu}_{k_j}} \psi (x^{k_j+1})$ that $\bar \xi\in \partial \psi(\bar x)$. In particular, $\bar x\in \dom \partial\psi$. 

Finally, we argue that $\overline{\Omega}$ contains a critical point of $\psi$. To this end, note that $\liminf_{k\to \infty} \|\xi^{k+1}\| = 0$ (see item (iii)), meaning that
there exists a subsequence $\{\xi^{k_j}\}$ such that 
$\lim_{j\to \infty}  \|\xi^{k_j+1}\| = 0$.
Since $\{x^{k}\}$ is bounded (see {\rm (C\ref{C3})}), 
by passing to a further subsequence if necessary, we may assume that 
$\lim_{j\to\infty} x^{k_j+1} = \bar{x}$ for some $\bar x \in \overline{\Omega}$.
Then, from {\rm (C\ref{C3})}, we have
$\lim_{j\to \infty} \psi(x^{k_j+1}) = \psi(\bar{x}) \in \R$.
Combining these facts, we conclude upon passing to the limit in $\xi^{k_j+1} \in \partial^{\varrho}_{\widetilde{\nu}_{k_j}} \psi (x^{k_j+1})$ that $0 \in \partial \psi(\bar x)$.
\end{proof}

\begin{lemma}[Finite length property]\label{lm-kl-x}
Let $\psi:\mathbb{X}\to (-\infty, \infty]$ be a proper, closed, 
and $\varrho$-weakly convex function with constant $\varrho\ge 0$. 
Suppose that Property~\ref{ass-param} holds.
Let $\{x^{k}\}$ be a sequence that satisfies {\rm (C\ref{C1})}, {\rm (C\ref{C2})}, and {\rm (C\ref{C3})}, and let $\overline{\Omega}$ denote the set of accumulation points of $\{x^k\}$. Suppose that $\psi$ is a KL function, and that $\hat{\psi}_k > \overline{\psi}$ for all $k\ge 0$, 
where $\overline{\psi}$ is given in \eqref{eq-value-lim}.
Then there exist $\eta>0$, $\varphi \in {\mathcal{Q}}_{\eta}$,
$N\in \mathbb{N}$, $c_1>0$ and $c_2>0$ such that, for any $k\ge N$,
\begin{equation}
h_{k} \coloneqq \hat{\psi}_k - \overline{\psi} < \eta 
\quad \text{and} \quad 
\frac{c_1}{\alpha_{k-1}} \left(
\Delta_{k-1}
+ \alpha_{k-1}A_{k-1}
\right)  \varphi^{\prime}(h_k) \ge 1
\label{eq-KL-ineq-2}
\end{equation}
and
\begin{align}\label{eq-x-sig}
\Delta_k
& \le \sigma_k - \sigma_{k+1},
\end{align}
where
\begin{align}\label{def-Delta}
& \Delta_l \coloneqq \|x^{l+1} - x^{l}\| + \alpha_{l} (\hat{\psi}_{l} - \hat{\psi}_{l+1})^{\frac{1}{2}},
\\& \sigma_l \coloneqq \Delta_{l-1} + c_2 \, \varphi(h_l) + \sum_{j=l}^{\infty}\alpha_{j-1}A_{j-1} < \infty
\quad \forall l\ge N.\label{eq-sig}
\end{align}
Moreover, 
\begin{equation}
	\label{eq-sum-sig}
\sum_{l=k}^{\infty}\Delta_l \le \sigma_k \quad \forall k\ge N.	
\end{equation}
\end{lemma}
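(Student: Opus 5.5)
The plan is to apply the KL property not to $\psi$ directly but to the auxiliary function $\Psi_\tau$ from \eqref{eq-F-tau-def}, with $\tau\ge 2$ as in Property~\ref{ass-param}(iii). The point is that $\hat\psi_k$ need not be a value of $\psi$ at a point where we control a subgradient. By Lemma~\ref{lm-basic}(i),(iv), $\overline{\Omega}$ is compact, lies in $\dom\partial\psi$, and $\psi\equiv\overline\psi$ on it. Hence $\Psi_\tau\equiv\overline\psi$ on the compact set $\overline\Omega\times\{0\}$, and by Corollary~\ref{cor-kl} this set lies in $\dom\partial\Psi_\tau$ and $\Psi_\tau$ is KL at each of its points. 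Lemma~\ref{lm-KL-unif} then gives uniform $\delta,\eta,\varphi$. Since $\hat\psi_k\downarrow\overline\psi$, we have $h_k<\eta$ for $k$ large, which is the first part of \eqref{eq-KL-ineq-2}.

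\textbf{Constructing the point where KL is applied.} For $k\ge 1$ we have $\xi^k\in\partial^{\varrho}_{\widetilde\nu_{k-1}}\psi(x^k)$, so Lemma~\ref{lm-BR} gives $z^k$ and $\zeta^k\in\partial\psi(z^k)$ with the following bounds:
\begin{itemize}
\item $\|x^k-z^k\|\le\widetilde\nu_{k-1}^{1/2}$;
\item $\|\zeta^k\|\le\|\xi^k\|+(1+\varrho)\widetilde\nu_{k-1}^{1/2}$;
\item $|\psi(x^k)-\psi(z^k)|\le M_\psi\widetilde\nu_{k-1}^{1/2}+(1+\tfrac{\varrho}{2})\widetilde\nu_{k-1}$.
\end{itemize}
Combined with \eqref{eq-f-psi}, this yields $|\hat\psi_k-\psi(z^k)|\le e_k:=\beta_k+M_\psi\widetilde\nu_{k-1}^{1/2}+(1+\tfrac{\varrho}{2})\widetilde\nu_{k-1}$. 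We then choose $t_k\ge0$ with $t_k^\tau=\hat\psi_k-\psi(z^k)+e_k\in[0,2e_k]$, so that $\Psi_\tau(z^k,t_k)-\overline\psi=h_k+e_k>0$. By Lemma~\ref{lm-basic}(ii) and Property~\ref{ass-param}(ii), $z^k-x^k\to0$ and $t_k\to0$. Hence, for $k\ge N$, the point $(z^k,t_k)$ lies within $\delta$ of $\overline\Omega\times\{0\}$, and the uniform KL inequality applies. Because $\varphi'$ is nonincreasing and $e_k\ge0$, we have $\varphi'(h_k+e_k)\le\varphi'(h_k)$, so
\[
1\le\varphi'(h_k)\bigl(\|\zeta^k\|+\tau t_k^{\tau-1}\bigr).
\]
We bound the two terms separately.
\begin{itemize}
\item $\tau t_k^{\tau-1}\lesssim e_k^{(\tau-1)/\tau}$. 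Using $\frac{\tau-1}{\tau}\ge\frac12$ and the smallness of $\widetilde\nu_{k-1}$, this is $\lesssim\beta_k^{(\tau-1)/\tau}+\widetilde\nu_{k-1}^{1/2}$.
\item By \eqref{eq-C2}, $\|\xi^k\|\le\frac{a_3}{\alpha_{k-1}}\|x^k-x^{k-1}\|+\gamma_{k-1}$.
\end{itemize}
For the $\widetilde\nu$ terms we use \eqref{eq-nu-tilde}: $\widetilde\nu_{k-1}^{1/2}\le\sqrt{a_2}(\hat\psi_{k-1}-\hat\psi_k)^{1/2}+\nu_{k-1}^{1/2}$. Here $(\hat\psi_{k-1}-\hat\psi_k)^{1/2}=\alpha_{k-1}^{-1}\cdot\alpha_{k-1}(\hat\psi_{k-1}-\hat\psi_k)^{1/2}$, which is exactly the second piece of $\Delta_{k-1}$ divided by $\alpha_{k-1}$. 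The remaining terms $\gamma_{k-1}$, $\beta_k^{(\tau-1)/\tau}$ and $\nu_{k-1}^{1/2}$ are written as $\alpha_{k-1}^{-1}\cdot\alpha_{k-1}A_{k-1}$. Collecting everything gives the second inequality in \eqref{eq-KL-ineq-2}. I expect this bookkeeping step to be the main obstacle: matching each error term with the right index shift and exponent, and making sure the sign choice $e_k\ge0$ makes the monotonicity of $\varphi'$ work in our favour.

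\textbf{Deriving \eqref{eq-x-sig}.} By concavity of $\varphi$ and $h_k-h_{k+1}=\hat\psi_k-\hat\psi_{k+1}$,
\[
\varphi(h_k)-\varphi(h_{k+1})\ge\varphi'(h_k)(\hat\psi_k-\hat\psi_{k+1}),
\]
and combining with \eqref{eq-KL-ineq-2},
\[
\hat\psi_k-\hat\psi_{k+1}\le\frac{c_1}{\alpha_{k-1}}\bigl(\Delta_{k-1}+\alpha_{k-1}A_{k-1}\bigr)\bigl(\varphi(h_k)-\varphi(h_{k+1})\bigr).
\]
On the other hand, \eqref{eq-f-beta-des} and $\alpha_k\le\bar\alpha$ give
\[
\Delta_k^2\le2\|x^{k+1}-x^k\|^2+2\alpha_k^2(\hat\psi_k-\hat\psi_{k+1})\le C\alpha_k(\hat\psi_k-\hat\psi_{k+1}).
\]
Using $\alpha_k\le M_\alpha\alpha_{k-1}$ from \eqref{eq-alpha-bound}, we get $\Delta_k^2\le K(\Delta_{k-1}+\alpha_{k-1}A_{k-1})(\varphi(h_k)-\varphi(h_{k+1}))$. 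The AM--GM inequality then gives
\[
2\Delta_k\le\Delta_{k-1}+\alpha_{k-1}A_{k-1}+K\bigl(\varphi(h_k)-\varphi(h_{k+1})\bigr).
\]
With $c_2:=K$, this rearranges exactly to $\Delta_k\le\sigma_k-\sigma_{k+1}$. Finiteness of $\sigma_l$ follows from Property~\ref{ass-param}(iii). Finally, $\sigma_l\ge0$ for all $l\ge N$, so summing \eqref{eq-x-sig} from $k$ to $K'$ and letting $K'\to\infty$ telescopes to \eqref{eq-sum-sig}.
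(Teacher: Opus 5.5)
Your overall architecture matches the paper: KL of $\Psi_\tau$ on $\overline\Omega\times\{0\}$, Lemma~\ref{lm-BR} to pass to $z^k$, monotonicity of $\varphi'$, then concavity plus AM--GM. Your derivation of \eqref{eq-x-sig} and \eqref{eq-sum-sig} from \eqref{eq-KL-ineq-2} is fine.

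\textbf{The gap is in your bound on $\tau t_k^{\tau-1}$.} You replaced $\|\xi^k\|$ by $M_\psi$ in the function-value estimate, so $e_k$ contains the term $M_\psi\widetilde\nu_{k-1}^{1/2}$. Raising to the power $\frac{\tau-1}{\tau}$ then produces $M_\psi^{\frac{\tau-1}{\tau}}\widetilde\nu_{k-1}^{\frac{\tau-1}{2\tau}}$. Since $\frac{\tau-1}{2\tau}<\frac12$ for every $\tau$ and $\widetilde\nu_{k-1}\to0$, this term is \emph{larger} than $\widetilde\nu_{k-1}^{1/2}$, not smaller. The fact $\frac{\tau-1}{\tau}\ge\frac12$ only handles the $\widetilde\nu_{k-1}$ summand of $e_k$. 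You therefore end up with $(\hat\psi_{k-1}-\hat\psi_k)^{\frac{\tau-1}{2\tau}}$ (a fourth root when $\tau=2$) and $\nu_{k-1}^{\frac{\tau-1}{2\tau}}$. Neither is controlled by $\Delta_{k-1}/\alpha_{k-1}$ or by $A_{k-1}$. So the second inequality in \eqref{eq-KL-ineq-2} does not follow. The AM--GM step yielding \eqref{eq-x-sig} also breaks, since a term like $(\hat\psi_{k-1}-\hat\psi_k)^{1/4}$ cannot be dominated by $\Delta_{k-1}$.

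\textbf{The fix, which is what the paper does, is to keep $\|\xi^k\|$ in the error term.} Set $s_k:=\widetilde\nu_{k-1}^{1/2}\|\xi^k\|+(1+\tfrac{\varrho}{2})\widetilde\nu_{k-1}+\beta_k$, and use $M_\psi$ only to conclude that $s_k\to0$. Then:
\begin{itemize}
\item Subadditivity of $t\mapsto t^{\frac{\tau-1}{\tau}}$ splits $s_k^{\frac{\tau-1}{\tau}}$ into three pieces.
\item Young's inequality with exponents $\frac{\tau}{\tau-1}$ and $\tau$ gives $(\widetilde\nu_{k-1}^{1/2}\|\xi^k\|)^{\frac{\tau-1}{\tau}}\le\frac{\tau-1}{\tau}\|\xi^k\|+\frac{1}{\tau}\widetilde\nu_{k-1}^{\frac{\tau-1}{2}}$.
\item Since $\tau\ge2$ and $\widetilde\nu_{k-1}<1$ eventually, $\widetilde\nu_{k-1}^{\frac{\tau-1}{2}}\le\widetilde\nu_{k-1}^{1/2}$.
\item The extra $\|\xi^k\|$ merges with your $\|\zeta^k\|$ bound and is then controlled through \eqref{eq-C2}.
\end{itemize}
This is precisely why Lemma~\ref{lm-BR} is stated with $\varepsilon^{1/2}\|\xi\|$ rather than $\varepsilon^{1/2}(\|\xi\|+\kappa)$. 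With $e_k$ redefined this way, your choice $t_k^\tau=\hat\psi_k-\psi(z^k)+e_k$ is harmless: it gives $\tau t_k^{\tau-1}\le 2\tau s_k^{\frac{\tau-1}{\tau}}$, in place of the paper's simpler choice $t=s_k^{1/\tau}$.
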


\begin{proof}		
Let $\Psi_{\tau}$ be defined by \eqref{eq-F-tau-def}, with $\tau \ge 2$ given in Property~\ref{ass-param}(iii).	
From Lemma~\ref{lm-basic}(i), it follows that $\overline{\Omega}\times \{0\}$ is nonempty and compact,
and $\Psi_{\tau}(x, t) = \overline{\psi} \in \mathbb{R}$ for all $(x, t) \in \overline{\Omega}\times \{0\}$. Since $\overline{\Omega}\subseteq \dom \partial \psi$ according to Lemma~\ref{lm-basic}(iv) and $\psi$ is a KL function by assumption, we deduce from
 Corollary~\ref{cor-kl} that $\overline{\Omega}\times \{0\}\subseteq \dom \partial\Psi_{\tau}$ and $\Psi_{\tau}$ satisfies the KL property at each point of 
$\overline{\Omega}\times \{0\}$.
Using Lemma~\ref{lm-KL-unif}, there exist $\eta>0$, $\delta\in (0, 1)$,
and $\varphi \in {\mathcal{Q}}_{\eta}$ such that,
for all $(x, t) \in \mathbb{X}\times (-\delta, \delta)$
with $\mathrm{dist}(x, \overline{\Omega}) < \delta$ and $\overline{\psi}< \psi(x) + |t|^{\tau} < \overline{\psi}+\eta$,
one has
\begin{equation}\label{eq-kl-Psi}
\varphi^{\prime}( \psi(x) + |t|^{\tau} - \overline{\psi})
\mathrm{dist}(0, \partial \Psi_{\tau}(x, t)) \ge 1.
\end{equation}

Now, for each $k\ge 1$, by Lemma~\ref{lm-BR} and the fact that $\xi^{k} \in \partial_{\widetilde{\nu}_{k-1}}^{\varrho} \psi(x^{k})$ (see {\rm (C\ref{C2})}),
there exist $z^{{k}} \in \mathbb{X}$ and $\zeta^{{k}} \in \partial \psi(z^{{k}})$ such that
\begin{align}\label{eq-x-z-xi-zeta}
& \|z^{{k}} - x^{{k}}\| \le \widetilde{\nu}_{k-1}^{\frac{1}{2}},
\quad
\|\xi^{k} -\zeta^{{k}}\| \le (1+\varrho)\widetilde{\nu}_{k-1}^{\frac{1}{2}},
\\\label{eq-fxz}
& |\psi(x^{{k}}) - \psi(z^{{k}})|
\le \widetilde{\nu}_{k-1}^{\frac{1}{2}} \|\xi^{{k}}\| + (1 + \tfrac{\varrho}{2})\widetilde{\nu}_{k-1}.
\end{align}
Moreover, by the triangle inequality,
\begin{align}
\notag & | \hat{\psi}_k - \psi(z^{k}) |
\le |\hat{\psi}_k - \psi(x^k)| + |\psi(x^{k}) - \psi(z^{k})|
\\& \le \widetilde{\nu}_{k-1}^{\frac{1}{2}} \|\xi^{k}\| + (1+\tfrac{\varrho}{2})\widetilde{\nu}_{k-1} + \beta_{k}
\eqcolon s_k \quad \forall k\in \mathbb{N},
\label{eq-s-def}
\end{align}
where the second inequality holds by \eqref{eq-f-psi} and \eqref{eq-fxz}.
Consequently,
\begin{align}\label{eq-hk-fzs}
0 & \le \hat{\psi}_k - \overline{\psi}  \le
\psi(z^{k}) + s_k - \overline{\psi}
\le \hat{\psi}_k - \overline{\psi} + 2s_k \quad \forall k\in \mathbb{N},
\end{align}
where the nonnegativity holds because $\{\hat{\psi}_k\}$ is nonincreasing (see \eqref{eq-f-beta-des})
and $\lim_{k\to\infty} \hat{\psi}_k = \overline{\psi}$ (see \eqref{eq-value-lim}),
and the last two inequalities hold by \eqref{eq-s-def}.

Since $\{\xi^{k+1}\}$ is bounded (see \eqref{eq-C2}), it follows from Lemma~\ref{lm-basic}(ii)
and Property~\ref{ass-param}(ii) that $\lim_{k\to \infty}s_{k} = 0$.
Furthermore,
\begin{equation*}
\mathrm{dist}(z^{k}, \overline{\Omega})
\le \mathrm{dist}(x^{k}, \overline{\Omega})
+ \| z^{k} - x^{k} \|
\le \mathrm{dist}(x^{k}, \overline{\Omega}) + \widetilde{\nu}_{k-1}^{\frac{1}{2}}
\to 0 \quad \text{as} \quad k \to \infty,
\end{equation*}
where the second inequality holds by \eqref{eq-x-z-xi-zeta},
and the limit holds by Lemma~\ref{lm-basic}(ii) and the definition of $\overline{\Omega}$.
Combining these facts with \eqref{eq-hk-fzs} and \eqref{eq-value-lim},
there exists $N \in \mathbb{N}$ such that, for all $k\ge N$,
\begin{align}\label{eq-KL-region}
&\mathrm{dist}(z^{k}, \overline{\Omega}) < \delta,
\quad 0 \le s_k < \delta^{\tau}<1,
\quad 0 \le \hat{\psi}_k - \overline{\psi} \le \psi(z^{k}) + s_{k} - \overline{\psi} < \eta.
\end{align}

Next, recall by assumption that $\hat{\psi}_k > \overline{\psi}$ for all $k\ge 0$. Then, for any $k\ge N$, one can set $(x, t) = (z^{k}, s_k^{\frac{1}{\tau}})$ in \eqref{eq-kl-Psi}, which gives
\begin{equation} \label{eq-kl-z}
1 \le \varphi^{\prime}( \psi(z^{k}) + s_k - \overline{\psi} )
\mathrm{dist}(0, \partial \Psi_{\tau}(z^{k}, s_k^{\frac{1}{\tau}}))
\le \varphi^{\prime}( \hat{\psi}_{k} - \overline{\psi} )
\mathrm{dist}(0, \partial \Psi_{\tau}(z^{k}, s_k^{\frac{1}{\tau}})),
\end{equation}
where the second inequality follows from the monotonicity of $\varphi^{\prime}$ on $(0, \eta)$ and the third relation in \eqref{eq-KL-region}.

Since $(\xi, r) \in \partial \Psi_{\tau}(z^{k}, s_k^{\frac{1}{\tau}})$ if and only if
$\xi \in \partial \psi(z^{k})$ and $r = \tau s_{k}^{\frac{\tau - 1}{\tau}}$, we have
\begin{align}
\notag & \mathrm{dist}(0, \partial \Psi_{\tau}(z^{k}, s_{k}^{\frac{1}{\tau}}))
= \sqrt{\mathrm{dist}(0, \partial \psi(z^{k}))^2 + (\tau s_k^{\frac{\tau-1}{\tau}})^2}
\le  \mathrm{dist}(0, \partial \psi(z^{k})) + \tau s_{k}^{\frac{\tau-1}{\tau}}
\\ & \le \|\zeta^{k}\| + \tau s_{k}^{\frac{\tau-1}{\tau}}
\le \|\xi^{k}\| + \| \zeta^{k} - \xi^{k}\| + \tau s_{k}^{\frac{\tau-1}{\tau}}
\le \|\xi^{k}\| + (1 + \varrho)\widetilde{\nu}_{k-1}^{\frac{1}{2}} + \tau s_{k}^{\frac{\tau-1}{\tau}},
\label{eq-part-Psi-upbd}
\end{align}
where the second inequality holds because $\zeta^{k} \in \partial \psi(z^{k})$,
and the last inequality follows from \eqref{eq-x-z-xi-zeta}.
Moreover, it follows from the definition of $s_k$ in \eqref{eq-s-def} that
\begin{align}
 \tau s_k^{\frac{\tau-1}{\tau}}
 & = \tau  \left( \widetilde{\nu}_{k-1}^{\frac{1}{2}}\|\xi^{k}\|  + (1 + \tfrac{\varrho}{2})\widetilde{\nu}_{k-1} +  \beta_k\right)^{\frac{\tau-1}{\tau}}\notag
 \\ & \le \tau  \left( (\widetilde{\nu}_{k-1}^{\frac{1}{2}}\|\xi^{k}\|)^{\frac{\tau-1}{\tau}}
+ (\tfrac{2+\varrho}{2}\widetilde{\nu}_{k-1})^{\frac{\tau-1}{\tau}} +  \beta_k^{\frac{\tau-1}{\tau}}\right) \notag
\\ & \notag \le \tau 
\left( \frac{\tau - 1}{\tau} \|\xi^{k}\| 
+ \frac{1}{\tau} \widetilde{\nu}_{k-1}^{\frac{\tau - 1}{2}}
+ (\tfrac{2+\varrho}{2}\widetilde{\nu}_{k-1})^{\frac{\tau-1}{\tau}} +  \beta_k^{\frac{\tau-1}{\tau}} \right)
\le \widetilde{c}_1
( \|\xi^{k}\| 
+ \widetilde{\nu}_{k-1}^{\frac{1}{2}} +  \beta_k^{\frac{\tau-1}{\tau}} ),
\end{align}
where the first inequality holds because $\frac{\tau - 1}{\tau}<1$, 
the second inequality holds by applying
Young's inequality $t_1 t_2 \le \frac{\tau - 1}{\tau} t_1^{\frac{\tau}{\tau - 1}} + \frac{1}{\tau}t_2^{\tau}$ with $t_1 = \|\xi^k\|^{\frac{\tau - 1}{\tau}}$ and 
$t_2 = \widetilde{\nu}_{k-1}^{\frac{\tau - 1}{2 \tau}}$, 
and the last inequality holds due to $\widetilde{\nu}_{k-1} \le s_k < 1$ (see \eqref{eq-s-def} and \eqref{eq-KL-region}), $\tau \ge 2$, and upon defining 
$$
\widetilde{c}_1 \coloneqq 
\tau \max \bigg\{\frac{\tau-1}{\tau}, \frac{1}{\tau} + \bigg(\frac{2+\varrho}{2}\bigg)^{\frac{\tau-1}{\tau}}, 1\bigg\}.
$$
This further yields
\begin{align}
& \|\xi^{k}\|
+ (1+\varrho) \widetilde{\nu}_{k-1}^{\frac{1}{2}}
+ \tau s_k^{\frac{\tau-1}{\tau}} \notag
\le (1 + \widetilde{c}_1) \|\xi^{k}\|
+ (1+\varrho + \widetilde{c}_1)
\widetilde{\nu}_{k-1}^{\frac{1}{2}} 
+ \widetilde{c}_1  \beta_k^{\frac{\tau-1}{\tau}}
\notag
\\ & \overset{\mathrm{(a)}}{\le} (1 + \widetilde{c}_1) \left(\frac{a_{3}}{\alpha_{k-1}} \|x^{k} - x^{k-1}\| + \gamma_{k-1} \right)
+ \widetilde{c}_2 \widetilde\nu_{k-1}^{\frac{1}{2}} + \widetilde{c}_1 \beta_k^{\frac{\tau-1}{\tau}} \notag
\\ & \notag \overset{\mathrm{(b)}}{\le}  
(1 + \widetilde{c}_1) \left(\frac{a_{3}}{\alpha_{k-1}} \|x^{k} - x^{k-1}\| + \gamma_{k-1} \right)
+ \widetilde{c}_2 \left( \sqrt{a_2 (\hat{\psi}_{k-1} - \hat{\psi}_{k} ) }  + \nu_{k-1}^{\frac{1}{2}} \right) 
+ \widetilde{c}_1 \beta_k^{\frac{\tau-1}{\tau}}
\\ \notag & \overset{\mathrm{(c)}}{\le}  
 \frac{c_1}{\alpha_{k-1}} \left(
\|x^{k} - x^{k-1}\| + \alpha_{k-1} (\hat{\psi}_{k-1} - \hat{\psi}_{k})^{\frac{1}{2}} + \alpha_{k-1}(\gamma_{k-1}  + \beta_k^{\frac{\tau-1}{\tau}} + \nu_{k-1}^{\frac{1}{2}})
\right),
\end{align}
where $\widetilde{c}_2$ and $c_1$ are given by
\begin{equation*}
\widetilde{c}_2 \coloneqq 1+\varrho + \widetilde{c}_1, 
\quad
c_1\coloneqq \max\{a_{3}(1+\widetilde{c}_1), \widetilde{c}_2 \sqrt{a_2},
1 + \varrho + \widetilde{c}_1\},
\end{equation*}
(a) follows from the first relation in \eqref{eq-C2},
(b) follows from the definition of $\widetilde{\nu}_{k-1}$ in \eqref{eq-nu-tilde} and the inequality $\sqrt{t_1 + t_2} \le \sqrt{t_1} + \sqrt{t_2}$ for all $t_1, t_2 \ge 0$,
(c) holds upon invoking the definitions of $c_1$ and $\widetilde{c}_2$.
This, combined with \eqref{eq-KL-region}, \eqref{eq-kl-z} and \eqref{eq-part-Psi-upbd}, implies \eqref{eq-KL-ineq-2}.

Next, we prove \eqref{eq-x-sig}.
Let $k\ge N$. By \eqref{eq-f-beta-des} and $\alpha_k\le \bar \alpha$ (see \eqref{eq-alpha-bound}), we have 
\begin{align}
    \hat{\psi}_{k} - \hat{\psi}_{k+1}
    & \ge \frac{a_1}{2\alpha_k} \| x^{k+1} - x^{k}\|^2
+ \frac{\alpha_k}{2\bar{\alpha}} (\hat{\psi}_{k} - \hat{\psi}_{k+1} ) \notag
\\ & \ge \frac{\min\{a_1, \bar\alpha^{-1}\}}{2\alpha_k} ( \| x^{k+1} - x^{k}\|^2 + \alpha_{k}^2(\hat{\psi}_{k} - \hat{\psi}_{k+1}) )
 \ge \frac{\min\{a_1, \bar\alpha^{-1}\}}{4\alpha_k} \Delta_{k}^2,
 \label{dpsi-lowb}
\end{align}
where the third inequality holds because of the definition of $\Delta_k$ in \eqref{def-Delta}
and the fact that $a^2 + b^2 \ge \frac{1}{2}(a+b)^2$ for all $a,b\ge 0$.
Since $\varphi$ is concave on $(0, \eta)$, we have
\begin{align*}
\varphi(h_{k}) - \varphi(h_{k+1}) 
& \ge \varphi^\prime(h_{k}) (\hat{\psi}_{k} - \hat{\psi}_{k+1} ) 
\ge \frac{\min\{a_1, \bar\alpha^{-1}\} \alpha_{k-1} \Delta_k^2}
{4c_1 \alpha_{k}(\Delta_{k-1} + \alpha_{k-1}A_{k-1})},
\end{align*}
where the second inequality holds thanks to \eqref{dpsi-lowb} and \eqref{eq-KL-ineq-2}.
Rearranging terms in the above display yields
\begin{align*}
\Delta_k^2 & \le \frac{4c_1\alpha_{k}}{\min\{a_1, \bar\alpha^{-1}\} \alpha_{k-1}}
(\varphi(h_{k}) - \varphi(h_{k+1})) (\Delta_{k-1} + \alpha_{k-1}A_{k-1})
\\ & \le c_2 (\varphi(h_{k}) - \varphi(h_{k+1})) (\Delta_{k-1} + \alpha_{k-1}A_{k-1}),
\end{align*}
where the second inequality holds upon invoking \eqref{eq-alpha-bound} and defining $c_2 \coloneqq 4c_1 M_{\alpha} / \min\{a_1, \bar{\alpha}^{-1} \}$.

Taking square roots 
 on both sides of the last display and using the inequality 
$\sqrt{t_1 t_2} \le (t_1 + t_2)/2 $ for all $t_1,t_2\ge 0$, 
we obtain that
\begin{align*}
2 \Delta_k
\le \Delta_{k-1} + c_2 (\varphi(h_{k}) - \varphi(h_{k+1})) + 	\alpha_{k-1}A_{k-1}.
\end{align*}
Moreover, it follows from \eqref{eq-sum-bdd} that $\sigma_l$ in \eqref{eq-sig} is finite, for each $l\ge N$. 
This, combined with the last display, implies \eqref{eq-x-sig}.

Finally, for any $k_2\ge k_1\ge N$,
summing \eqref{eq-x-sig} from $k = k_1 $ to $k_2$,
we have 
\begin{equation*}
	\sum_{k=k_1}^{k_2} \Delta_k 
	\le \sigma_{k_1} - \sigma_{k_2+1} \le \sigma_{k_1},
\end{equation*}
where the last inequality holds thanks to the nonnegativity of $\{\sigma_l\}$.
Fixing $k_1$ and passing to the limit as $k_2\to \infty$ in the last display, we conclude that \eqref{eq-sum-sig} holds.
\end{proof}

\begin{theorem}[Global convergence under KL property]\label{th-global-convergence}
Let $\psi:\mathbb{X}\to (-\infty, \infty]$ be a proper, closed, 
and $\varrho$-weakly convex function with constant $\varrho\ge 0$. 
Suppose that Property~\ref{ass-param} holds.
Let $\{x^{k}\}$ be a sequence that satisfies {\rm (C\ref{C1})}, {\rm (C\ref{C2})}, and {\rm (C\ref{C3})}.
Suppose that $\psi$ is a KL function.
Then $\{x^{k}\}$ converges to a critical point of $\psi$.
Moreover, it holds that $\sum_{k=0}^{\infty}\|x^{k+1} - x^{k}\| < + \infty$.
\end{theorem}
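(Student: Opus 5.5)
The plan splits into two cases according to whether the monotone sequence $\{\hat\psi_k\}$ reaches its limit $\overline{\psi}$ (see \eqref{eq-value-lim}) in finitely many steps. Recall that $\{\hat\psi_k\}$ is nonincreasing by \eqref{eq-f-beta-des} and converges to $\overline{\psi}$ by Lemma~\ref{lm-basic}(i).

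\emph{Case 1: $\hat\psi_{K}=\overline{\psi}$ for some $K$.} Monotonicity then forces $\hat\psi_k=\overline{\psi}$ for all $k\ge K$. Hence \eqref{eq-f-beta-des}, together with $\alpha_k>0$, gives $x^{k+1}=x^k$ for all $k\ge K$. The sequence is therefore eventually constant, and $\sum_k\|x^{k+1}-x^k\|<\infty$ holds trivially. The limit $x^*$ is then the unique point of $\overline{\Omega}$. By Lemma~\ref{lm-basic}(iv), $\overline{\Omega}$ contains a critical point of $\psi$, so $x^*$ is critical.

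\emph{Case 2: $\hat\psi_k>\overline{\psi}$ for all $k$.} This is exactly the hypothesis of Lemma~\ref{lm-kl-x}. That lemma supplies $N$ and the estimate \eqref{eq-sum-sig}: $\sum_{l\ge N}\Delta_l\le\sigma_N<\infty$, where finiteness of $\sigma_N$ uses \eqref{eq-sum-bdd} from Property~\ref{ass-param}(iii). Since $\|x^{l+1}-x^l\|\le\Delta_l$ by \eqref{def-Delta}, we obtain $\sum_{k}\|x^{k+1}-x^k\|<\infty$. Consequently $\{x^k\}$ is Cauchy and converges to some $x^*$. Then $\overline{\Omega}=\{x^*\}$, and once more Lemma~\ref{lm-basic}(iv) shows that $x^*$ is a critical point of $\psi$.

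The only delicate point is the case split. Lemma~\ref{lm-kl-x} needs strict inequality $\hat\psi_k>\overline{\psi}$ for every $k$, and monotonicity guarantees that failure at any single index already places us in Case 1. All the substantive KL work is contained in Lemma~\ref{lm-kl-x}, so no real obstacle remains beyond checking that its hypotheses hold. Specifically, Property~\ref{ass-param} is assumed here, and (C1)--(C3) hold by hypothesis.
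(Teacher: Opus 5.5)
Your proposal is correct and follows the paper's proof essentially verbatim. It uses the same case split on whether $\hat\psi_k$ ever equals $\overline{\psi}$, obtains eventual constancy from \eqref{eq-f-beta-des} in the first case, and in the second case combines finite length from Lemma~\ref{lm-kl-x} via \eqref{eq-sum-sig} with Lemma~\ref{lm-basic}(iv) to identify the limit as a critical point.
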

\begin{proof}
If $\hat{\psi}_{k_1} = \overline{\psi}$ for some $k_1$, 
then by \eqref{eq-f-beta-des} and \eqref{eq-value-lim}, 
$\hat{\psi}_k = \overline{\psi}$ and $x^{k+1} = x^{k}$ for all $k\ge k_1$. 
This, combined with Lemma~\ref{lm-basic}(iv), implies that 
$\{x^{k}\}$ converges finitely to a critical point of $\psi$, 
and that $\sum_{k=0}^{\infty}\|x^{k+1} - x^{k}\| < + \infty$.

On the other hand, if $\hat{\psi}_{k} > \overline{\psi}$ for all $k$, 
then it follows from \eqref{def-Delta}, \eqref{eq-sig} and \eqref{eq-sum-sig} that $\sum_{k=0}^{\infty}\|x^{k+1} - x^{k}\| < + \infty$. 
This implies that $\{x^{k}\}$ is convergent.
This, together with Lemma~\ref{lm-basic}(iv), completes the proof.
\end{proof}

\begin{lemma}\label{lm-rec}
Let $\psi:\mathbb{X}\to (-\infty, \infty]$ be a proper, closed, 
and $\varrho$-weakly convex function with constant $\varrho\ge 0$. 
Suppose that Property~\ref{ass-param} holds, 
and that $\psi$ is a KL function with exponent $\theta \in (0, 1)$.
Let $\{x^{k}\}$ satisfy {\rm (C\ref{C1})}, {\rm (C\ref{C2})} and {\rm (C\ref{C3})}.
Suppose that $\hat{\psi}_k > \overline{\psi}$ for all $k\ge 0$, 
where $\overline{\psi}$ is given in \eqref{eq-value-lim}.
Then for the $\tau$ in Property~\ref{ass-param}(iii), there exist $\eta>0$, $N \in \mathbb{N}$, $c_1>0$, $c_2>0$ and $\varphi(t) = \kappa t^{1-\theta_\tau}$ for some constant $\kappa > 0$, where $\theta_\tau := \max\{\theta, \frac{\tau - 1}{\tau}\}$, such that \eqref{eq-KL-ineq-2}, \eqref{eq-x-sig} and \eqref{eq-sum-sig} hold for all $k\ge N$. Moreover, there exist $N_2 \ge N$, $c_3 > 0$ and $c_4>0$ such that
\begin{align}
	& \sigma_{k+1} + c_3 \alpha_{k} 
	\sigma_{k+1}^{p} \le \sigma_{k}  + 
	c_4 \alpha_{k} (A_{k} + R_{k+1}^{p})
	\quad \forall k\ge N_2,
	\label{eq-sig-rec-2}
\end{align}
where $\{\sigma_l\}$ is given in \eqref{eq-sig}, $p \coloneqq \max\{1, \theta_{\tau}/(1- \theta_{\tau}) \}$, and 
\begin{equation}
	\label{eq-R}
	R_l \coloneqq  \sum_{j = l}^{\infty} \alpha_{j-1} A_{j-1}<\infty
	\quad \forall l \in \N.
\end{equation}
Furthermore, if $\theta \in (0, \frac{1}{2}]$ and $\tau = 2$, then 
\begin{align}\label{eq-rec-0-1/2}
	& \sigma_{k+1} \le (1+c_3 \alpha_k)^{-1} \sigma_{k}  + 
	c_4 \alpha_{k} (A_{k} +R_{k+1})
	\quad \forall k\ge N_2.
\end{align}
\end{lemma}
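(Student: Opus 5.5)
The first step is to obtain the desingularizing function. Since $\psi$ is a KL function with exponent $\theta$, Proposition~\ref{prop-kl} and Corollary~\ref{cor-kl} show that $\Psi_\tau$ satisfies the KL property at each point of $\overline{\Omega}\times\{0\}$. I would check that the exponent there can be taken to be $\theta_\tau=\max\{\theta,\frac{\tau-1}{\tau}\}$. The function $t\mapsto|t|^\tau$ has KL exponent $\frac{\tau-1}{\tau}$ at $0$, because $\tau|t|^{\tau-1}=\tau(|t|^\tau)^{(\tau-1)/\tau}$. A KL exponent can always be enlarged when the function gap is below $1$. For a separable sum, one uses $\mathrm{dist}(0,\partial\Psi_\tau)\ge\frac{1}{\sqrt2}(\mathrm{dist}(0,\partial\psi)+\tau|t|^{\tau-1})$ together with $(a+b)^{\theta_\tau}\le a^{\theta_\tau}+b^{\theta_\tau}$ for $a,b\ge0$. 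These three facts show that $\Psi_\tau$ has exponent $\theta_\tau$ at $(\bar x,0)$. The one subtlety is the region where $\psi(x)<\overline\psi$. There I would argue as in \cite[Proposition~4.2]{YKZH25}, or note that the rest of the argument only uses points $(z^k,s_k^{1/\tau})$ at which the total gap is positive.

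With exponent $\theta_\tau$, the uniformization in Lemma~\ref{lm-KL-unif} can be carried out with $\varphi(t)=\kappa t^{1-\theta_\tau}$, taking the smallest constant over a finite cover. Rerunning the proof of Lemma~\ref{lm-kl-x} verbatim with this $\varphi$ then gives \eqref{eq-KL-ineq-2}, \eqref{eq-x-sig} and \eqref{eq-sum-sig}.

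For \eqref{eq-sig-rec-2}, I would bound $\sigma_{k+1}$ from above using \eqref{eq-KL-ineq-2} at index $k+1$. That inequality gives $\varphi'(h_{k+1})^{-1}=\kappa^{-1}(1-\theta_\tau)^{-1}h_{k+1}^{\theta_\tau}\le \frac{c_1}{\alpha_k}(\Delta_k+\alpha_kA_k)$. Hence
\[
\varphi(h_{k+1})=\kappa h_{k+1}^{1-\theta_\tau}\le C\Big(\frac{\Delta_k+\alpha_kA_k}{\alpha_k}\Big)^{\frac{1-\theta_\tau}{\theta_\tau}}.
\]
Therefore
\[
\sigma_{k+1}=\Delta_k+c_2\varphi(h_{k+1})+R_{k+1}\le \Delta_k+C\big(\alpha_k^{-1}(\Delta_k+\alpha_kA_k)\big)^{1/p'}+R_{k+1},
\]
where $p'=\theta_\tau/(1-\theta_\tau)$. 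Both $\Delta_k\to0$ and $A_k\to0$, and $\alpha_k\le\bar\alpha$. So for large $k$, when $p'\ge1$ the dominant term is the middle one, and raising to the power $p=p'$ gives
\[
\sigma_{k+1}^{p}\le C'\big(\alpha_k^{-1}\Delta_k+A_k+R_{k+1}^{p}\big),
\]
using $(a+b+c)^p\le3^{p-1}(a^p+b^p+c^p)$ and $\Delta_k^p\le\Delta_k\le\bar\alpha\,\alpha_k^{-1}\Delta_k$ once $\Delta_k\le1$. When $p'<1$, so that $p=1$, the middle term is bounded by a multiple of $\alpha_k^{-1}(\Delta_k+\alpha_kA_k)$, because the base tends to zero. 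I expect this to be the delicate step: the base is $\alpha_k^{-1}\Delta_k$, which is not obviously small when $\alpha_k\to0$. To handle it I would use \eqref{dpsi-lowb}, which gives $\Delta_k^2\lesssim\alpha_k(\hat\psi_k-\hat\psi_{k+1})$, so that $\alpha_k^{-1}\Delta_k\lesssim (\hat\psi_k-\hat\psi_{k+1})^{1/2}\alpha_k^{-1/2}\cdot\alpha_k^{0}$. If this is not small, I would fall back on the branch where the base exceeds $1$ and use the bound $\max\{u,u^{1/p'}\}$.

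Multiplying the resulting bound by $c_3\alpha_k$ gives $c_3\alpha_k\sigma_{k+1}^p\le \Delta_k+c_4\alpha_k(A_k+R_{k+1}^p)$. Adding \eqref{eq-x-sig} in the form $\sigma_{k+1}\le\sigma_k-\Delta_k$ then yields \eqref{eq-sig-rec-2}.

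Finally, suppose $\theta\le\frac12$ and $\tau=2$. Then $\theta_\tau=\frac12$ and $p=1$, and \eqref{eq-sig-rec-2} reads $(1+c_3\alpha_k)\sigma_{k+1}\le\sigma_k+c_4\alpha_k(A_k+R_{k+1})$. Dividing by $1+c_3\alpha_k\ge1$ gives \eqref{eq-rec-0-1/2}.
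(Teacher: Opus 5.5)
Your argument is correct and is essentially the paper's proof. Your bound $\varphi(h_{k+1})\le C\bigl(\alpha_k^{-1}(\Delta_k+\alpha_kA_k)\bigr)^{1/p'}$ is \eqref{eq-tilde-c3} rearranged, and the $3^{p-1}$ expansion, the estimate $\Delta_k^p\le\Delta_k\le\bar\alpha\,\alpha_k^{-1}\Delta_k$, and the use of \eqref{eq-x-sig} that follow are exactly what the paper does.

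The ``delicate'' branch $p'<1$ never occurs. Since $\tau\ge2$, you have $\theta_\tau\ge\frac12$, and hence $p=p'=\theta_\tau/(1-\theta_\tau)\ge1$. Had that case mattered, the fix is the paper's observation that $\varphi(h_{k+1})\le1$ for large $k$, so that $\varphi(h_{k+1})^p\le\varphi(h_{k+1})^{p'}$ whenever $p\ge p'$. Your routes through \eqref{dpsi-lowb} and through the bound $\max\{u,u^{1/p'}\}$ would not work, because neither gives an estimate linear in $\alpha_k^{-1}(\Delta_k+\alpha_kA_k)$.
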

\begin{proof}
Since $\psi$ is a KL function with exponent $\theta \in (0, 1)$, 
by \cite[Proposition~4.2]{YKZH25}, we see that $\Psi_{\tau}$ defined in \eqref{eq-F-tau-def} is a KL function with exponent $\theta_\tau = \max\{\theta, \frac{\tau - 1}{\tau}\}$. 
By Lemma~\ref{lm-kl-x}, there exist $\eta>0$, $\varphi \in {\mathcal{Q}}_{\eta}$, $N \in \mathbb{N}$, $c_1>0$ and $c_2>0$ such that 
\eqref{eq-KL-ineq-2}, \eqref{eq-x-sig} and \eqref{eq-sum-sig} hold for all $k\ge N$, 
where the function $\varphi$ can be chosen as
$\varphi(t) = \kappa t^{1 - \theta_{\tau}}$
with some constant $\kappa>0$ in view of its proof; see \eqref{eq-kl-Psi} where the KL property was invoked for the existence of $\varphi$.	

By Lemma~\ref{lm-basic}(i) and (ii), and invoking the definition of $h_k$ in \eqref{eq-KL-ineq-2} and that of $\Delta_k$ in \eqref{def-Delta},
we see that there exists an integer $N_2\ge N$ such that 
\begin{equation}\label{eq-le-1}
	\varphi(h_k) \le 1\quad \mbox{and} \quad \Delta_k \le 1 
	\quad \forall k\ge N_2.
\end{equation}

Now, using \eqref{eq-KL-ineq-2} and $\varphi^{\prime}(t) = \kappa (1-\theta_{\tau}) t^{-\theta_{\tau}}$, 
we have for all $k\ge N_2$,
\begin{align}
\label{eq-tilde-c3}
\notag \Delta_{k} + \alpha_{k}A_{k}
& \ge \frac{\alpha_{k}} {c_1\kappa(1-\theta_{\tau})}  h_{k+1}^{\theta_{\tau}}
= \frac{\alpha_{k}} {c_1\kappa(1-\theta_{\tau})}  (\varphi(h_{k+1})/\kappa)^{\frac{\theta_{\tau}}{1-\theta_{\tau}}}
\\ & = \tilde{c}_3 \alpha_{k} \varphi(h_{k+1})^{\frac{\theta_{\tau}}{1-\theta_{\tau}}}
\ge \tilde{c}_3 \alpha_{k} \varphi(h_{k+1})^{p},
\end{align}
where $\tilde{c}_3\coloneqq 1/(c_1 \kappa^{\frac{1}{1-\theta_{\tau}}} (1-\theta_{\tau}))>0$,
the first equality holds since $\varphi(t) = \kappa t^{1 - \theta_{\tau}}$ for $t\ge 0$,
and the last inequality holds thanks to \eqref{eq-le-1} and the fact $p\ge \frac{\theta_{\tau}}{1-\theta_{\tau}}$.
Moreover, by definition, we have for all $k\ge N_2$,
\begin{align*}
\tilde{c}_3 \alpha_{k} \sigma_{k+1}^{p}
& = \tilde{c}_3 \alpha_{k} (\Delta_k + c_2 \varphi(h_{k+1}) + R_{k+1})^{p}	
\\ & \le 3^{p-1} \tilde{c}_3 \alpha_{k} \left({\Delta_k}^{p} + c_2^p \varphi(h_{k+1})^{p} + R_{k+1}^{p}	\right)
\\ & \le 3^{p-1} \left(\tilde{c}_3 \alpha_{k} {\Delta_k}^{p} + c_2^p
({\Delta_k} + \alpha_{k}A_{k})
+ \tilde{c}_3 \alpha_{k} R_{k+1}^{p}	\right)
\\ & \le 3^{p-1} \left( (\tilde{c}_3 \bar{\alpha} + c_2^p)
{\Delta_k} + c_2^p\alpha_{k}A_{k}
+ \tilde{c}_3 \alpha_{k} R_{k+1}^{p}	\right)
\\ & \le 3^{p-1} \left( (\tilde{c}_3 \bar{\alpha} + c_2^p)
(\sigma_{k} - \sigma_{k+1}) + c_2^p\alpha_{k}A_{k}
+ \tilde{c}_3 \alpha_{k} R_{k+1}^{p}	\right),
\end{align*}
where the first inequality due to convexity of $t\to t^p$ (since $p\ge 1$), 
the second inequality follows from \eqref{eq-tilde-c3}, 
the third inequality follows from \eqref{eq-alpha-bound} and the facts that
${\Delta_k} \le 1$ (see \eqref{eq-le-1}) and $p\ge 1$, 
and the last inequality holds because of \eqref{eq-x-sig}.
Rearranging terms of the last display, we then obtain \eqref{eq-sig-rec-2} with suitable constants $c_3>0$ and $c_4>0$.

Finally, we consider \eqref{eq-rec-0-1/2}.
Suppose that $\theta \in (0, \frac{1}{2}]$ and $\tau = 2$.
In this case, we have $\theta_{\tau} = \max\{\theta, \frac{\tau - 1}{\tau}\} = \frac{1}{2}$ and $p = \max\{1, \frac{\theta_{\tau}}{1 - \theta_{\tau}}\} = 1$.
It then follows from \eqref{eq-sig-rec-2} that
\begin{align*}
& \sigma_{k+1} + c_3 \alpha_{k} 
\sigma_{k+1} \le \sigma_{k}  + 
c_4 \alpha_{k} (A_{k} + R_{k+1})
\quad \forall k\ge N_2.
\end{align*}
This implies
\begin{align*}
\sigma_{k+1} & \le (1 + c_3 \alpha_{k})^{-1}\sigma_{k}  + 
\frac{c_4}{1 + c_3 \alpha_{k}}  \alpha_{k} (A_{k} + R_{k+1})
\\& \le  (1 + c_3 \alpha_{k})^{-1} \sigma_{k}  + 
c_4 \alpha_{k} (A_{k} + R_{k+1}).
\end{align*}
This shows \eqref{eq-rec-0-1/2}. The proof is complete.	
\end{proof}

Now, we are ready to prove Theorem~\ref{th-main}.
\begin{proof}[{Proof of Theorem~\ref{th-main}.}]
By Proposition~\ref{prop-paras}, Property~\ref{ass-param} holds under either condition {\rm (A\ref{(A)})} or {\rm (A\ref{(B)})}. 
Then, from Theorem~\ref{th-global-convergence}, item (i) follows.

Next, we turn to prove items (ii), (iii) and (iv), which look at the case when $\psi$ is a KL function with exponent $\theta\in (0,1)$.
If $\hat{\psi}_{k_0} = \overline{\psi}$ for some $k_0\ge 0$,	
then from \eqref{eq-f-beta-des} and \eqref{eq-value-lim}, it follows that
$\hat{\psi}_k = \overline{\psi}$ and $x^{k+1} = x^k$ for all $k\ge k_0$, 
which, together with Lemma~\ref{lm-basic}(iv), implies that $\{x^{k}\}$ converges finitely to $x^*$, and hence the conclusions hold trivially.

In what follows, we suppose that $\hat{\psi}_{k} > \overline{\psi}$ for all $k\ge 0$. 
Then the conclusions of Lemma~\ref{lm-rec} hold.
Note that for every $k\ge N_2$ and $k^{\prime} \ge 1$,
\begin{equation*}
	\| x^{k} - x^{k+k^{\prime} + 1} \| 
	\le \sum_{l=k}^{k+k^{\prime}} \| x^{l} - x^{l+1}\|
	\le \sum_{l=k}^{\infty} \| x^{l} - x^{l+1}\|
    \le \sum_{l=k}^{\infty} \Delta_l\le \sigma_{k},
\end{equation*}
where the third inequality follows from the definition of $\Delta_l$ in \eqref{def-Delta},
and the last inequality follows from \eqref{eq-sum-sig}.
Fixing $k\ge N_2$ and taking $k^{\prime} \to \infty$ yield
\begin{equation}\label{eq-xstar-sig}
	\|x^{k} - x^{*} \| \le \sum_{l=k}^{\infty} \| x^{l+1} - x^{l} \| \le \sigma_k.
\end{equation}

Now, we consider item (ii). 
Suppose further that $\psi$ is a KL function with an exponent $\theta \in (0, \frac{1}{2}]$ and condition {\rm (A\ref{(A)})} holds.
Then by Proposition~\ref{prop-paras}(i), one can set $\tau = 2$ in Property~\ref{ass-param}(iii), 
and it holds that $A_{k} = \mathcal{O}(q_{A}^{k})$ and $R_k = \mathcal{O}(q^{k}_{A})$ for some $q_A \in (0, 1)$, where $A_k$ and $R_k$ are given in \eqref{eq-sum-bdd} and \eqref{eq-R}, respectively.
Thus, there exist $M_1,M_2>0$ such that $A_k \le M_1 q_{A}^{k}$ and $R_{k} \le M_2 q_{A}^{k}$ for all $k\ge 1.$ 
Moreover, $\underline{\alpha} \coloneqq \inf\{\alpha_k\} > 0$ because
$\{\alpha_k\}$ is positive and $\alpha_k = \Theta(1)$.
Since $\theta \in (0, \frac{1}{2}]$ and $\tau = 2$, with the corresponding $N_2$, $c_3$ and $c_4$ in \eqref{eq-rec-0-1/2}, we have
for all $k\ge N_2$ that
\begin{align*}
	\sigma_{k+1} & \le (1 + c_3 \alpha_{k} )^{-1} \sigma_{k}  + 
	c_4 \alpha_{k} (A_{k} +R_{k+1})
	\le (1 + c_3 \underline{\alpha} )^{-1} \sigma_{k}  + 
	c_4 \bar{\alpha} (M_1  + M_2)q_{A}^{k}
	\\ & \le \widetilde{q} \sigma_{k} + M_3 \widetilde{q}^{k}
	\le \widetilde{q}^2 \sigma_{k-1} + 2 M_3 \widetilde{q}^{k}
	\le \widetilde{q}^{k+1-N_2} \sigma_{N_2} + M_3 (k+1-N_2)  \widetilde{q}^{k},
\end{align*}
where $\widetilde{q} \coloneqq \max\{(1 + c_3 \underline{\alpha})^{-1}, q_{A}\} \in (0, 1)$ and $M_3 \coloneqq c_4 \bar{\alpha} (M_1  + M_2)$.
Now, pick an integer $N_3\ge N_2$ such that 
$k+1-N_2\ge \frac{k}{2}$ and $(k+1-N_2) \widetilde{q}^{\frac{k}{2}} \le 1$ for all $k\ge N_3$.
Then we see from the above display that $\sigma_{k+1} \le (\sigma_{N_2} + M_3) \widetilde{q}^{\frac{k}{2}}$ for all $k\ge N_3$.
Combining this with \eqref{eq-xstar-sig}, we obtain \eqref{rate-main-R}.

Finally, we consider items (iii) and (iv). 
Suppose further that $\psi$ is a KL function with exponent $\theta \in (0, 1)$
and {\rm (A\ref{(B)})} holds.
Then, there exist $M_5>M_4>0$ such that 
\begin{equation}\label{boundalpha}
M_4(1+k)^{-r_{\alpha}} \le \alpha_k \le M_5 (1+k)^{-r_{\alpha}}\ \ \ \forall k\ge 0.
\end{equation}
By Proposition~\ref{prop-paras}(ii), for any $\tau \ge \widetilde{\tau}$, we have $A_{k} = \mathcal{O}(k^{-r_{\tau}})$ and 
$R_k = \mathcal{O}(k^{-r_{R}})$, where
$r_{R} \coloneqq r_{\alpha} + r_{\tau} - 1$, which is positive thanks to {\rm (A\ref{(B)})}.
This implies that
$$
B_{k} \coloneqq A_{k} + R_{k+1}^{p} = \mathcal{O}(k^{-r_B}),
$$
where $p := \max\{1,\frac{\theta_\tau}{1 - \theta_{\tau}}\}$ 
and $r_B \coloneqq \min\{r_{\tau}, p(r_{\alpha} + r_{\tau} - 1) \}.$
Thus, there exists $M_6>0$ such that $B_{k} \le M_6 (1+k)^{-r_B}$ for all $k\ge 0$.

Now, for each fixed $\tau \ge \widetilde\tau$,\footnote{Notice from Proposition~\ref{prop-paras}(ii) that (A2) implies Property~\ref{ass-param}(iii) holds with any $\tau \ge \widetilde\tau$.} with the corresponding $N_2$, $c_3$ and $c_4$ in \eqref{eq-sig-rec-2}, we have for all $k\ge N_2$ that
\begin{align*}
	& \sigma_{k+1} + c_3 M_4(1+k)^{-r_{\alpha}} \sigma_{k+1}^{p}  
	\le \sigma_{k+1} + c_3 \alpha_{k} \sigma_{k+1}^{p} 
	\\ & \le \sigma_{k}  + c_4 \alpha_{k} (A_{k} + R_{k+1}^{p})
	\le \sigma_{k}  + c_4 M_5  M_6 (1+k)^{-(r_{\alpha} + r_B)},
\end{align*}
where we use \eqref{boundalpha} in the first and third inequalities, and the second inequality holds thanks to \eqref{eq-sig-rec-2}.
This, together with Lemma~\ref{lm-rate}, implies
\begin{equation}\label{display_sigmak}
\sigma_k = \mathcal{O}(k^{-\widetilde{r}})
\quad \text{with} \quad 		
\widetilde{r} = \begin{cases}
	r_B & \mbox{if}\ p = 1,
	\\ \min\left\{ \frac{r_B}{p}, \frac{1-r_{\alpha}}{p - 1} \right\}
	& \mbox{if}\ p > 1.
\end{cases}
\end{equation}

If $\theta \in (0, \frac{1}{2}]$ and $\widetilde{\tau} = 2$, then we let $\tau = 2$, which implies that $\theta_\tau = \max\{\theta, \frac{\tau - 1}{\tau}\} = \frac{1}{2}$, 
$p = \max\{1, \frac{\theta_{\tau}}{1 - \theta_{\tau}}\} = 1$, 
and $r_B = \min\{r_{\tau}, p(r_{\alpha} + r_{\tau} - 1) \} = r_{\alpha} + r_{\tau} - 1$.
This, combined with \eqref{display_sigmak} and \eqref{eq-xstar-sig}, implies \eqref{rate-main1}.

On the other hand, if $\theta \in (\frac{1}{2}, 1)$ and $\tau\ge \widetilde{\tau}$, 
then $\theta_{\tau} =  \max\{\theta, \frac{\tau - 1}{\tau}\} > \frac{1}{2}$,
and $p = \max\{1, \frac{\theta_{\tau}}{1 - \theta_{\tau}}\} = \frac{\theta_{\tau}}{1 - \theta_{\tau}} > 1$.
Using this, the definition of $r_B$, \eqref{display_sigmak} and \eqref{eq-xstar-sig}, 
we obtain \eqref{rate-main2}.
This completes the proof.
\end{proof}

\appendix
\section{Proof of Proposition~\ref{prop-kl}}\label{AppendixA}

\begin{proof}
	By assumption, there exist $\delta_1>0$, $\eta_1>0$ and $\varphi_{1} \in \mathcal{Q}_{\eta_1}$ such that if 
	$$
	x\in \mathbb{B}(\bar{x}, \delta_1) \cap \dom \partial f
	\quad \text{and} \quad
	0<f(x) - f(\bar{x}) < \eta_1,
	$$ 
	one has
	\begin{equation}\label{eq-kl-f}
		\varphi_1^{\prime}(f(x) - f(\bar{x})) \dist(0, \partial f(x)) \ge 1.
	\end{equation}
	Moreover, there exist $\delta_2>0$, $\eta_2>0$ and $\varphi_{2} \in \mathcal{Q}_{\eta_2}$ such that if 
	$$
	y\in \mathbb{B}(\bar{y}, \delta_2) \cap \dom \partial g
	\quad \text{and} \quad
	0 < g(y) - g(\bar{y}) < \eta_2,
	$$ 
	one has
	\begin{equation}\label{eq-kl-g}
		\varphi_2^{\prime}(g(y) - g(\bar{y})) \dist(0, \partial g(y)) \ge 1.
	\end{equation}
	
	Define $\widetilde{\varphi}:[0, \eta)\to \mathbb{R}_{+}$ by
	$$
	\widetilde{\varphi}(t)\coloneqq 2 \varphi_1(\tfrac{t}{2}) + 2\varphi_2(\tfrac{t}{2})
	\quad \forall t\in [0, \eta),
	$$ 
	where $\eta \coloneqq \min\{\eta_1 / 2, \eta_2\}$.	
	Clearly, $\widetilde{\varphi} \in \mathcal{Q}_{\eta}$.

	Since $g$ is continuous at $\bar y$, there exists $\delta_3>0$ such that
	$|g(y)-g(\bar y)| < \eta$ for all $y\in \mathbb{B}(\bar{y}, \delta_3)$.
	Let $\delta \coloneqq \min\{\delta_1, \delta_2, \delta_3\}$.
	Suppose that 
	$$
	(x, y) \in  \mathbb{B}((\bar{x}, \bar{y}), \delta) \cap \dom \partial \Psi
	\quad \text{and} \quad 
	0 < \Psi(x, y) - \Psi(\bar{x}, \bar{y}) < \eta.
	$$
	We claim that $(\bar x,\bar y)\in \dom \partial \Psi$ and
	\begin{equation}\label{eq-kl-2}
		\widetilde{\varphi}^{\prime}\left(\Psi(x, y) - \Psi(\bar{x}, \bar{y})\right) 
		\dist \left(0, \partial \Psi(x, y) \right) \ge 1.
	\end{equation}
	
	Indeed, from \cite[Proposition 10.5]{RW98}, it follows that $\partial \Psi(u, v) = \partial f(u) \times \partial g(v)$ for any $(u,v)\in \dom \Psi$,\footnote{Although \cite[Proposition~10.5]{RW98} is stated under the additional assumptions
$\mathrm{d}f(u)(0)=0$ and $\mathrm{d}g(v)(0)=0$, these assumptions are not used in the proof of this subdifferential formula.} which implies that $(\bar x,\bar y)\in \dom \partial f \times \dom \partial g = \dom \partial \Psi$, $(x,y)\in  \dom \partial \Psi = \dom \partial f \times \dom \partial g$, and 
		$\dist \left(0, \partial \Psi(x, y) \right) \ge \max\{\dist(0, \partial f(x)), 
		\dist(0, \partial g(y))\}$.
	This, together with $\widetilde{\varphi}^{\prime}(t) = \varphi_1^{\prime}(\tfrac{t}{2}) + \varphi_2^{\prime}(\tfrac{t}{2})$ for $t\in (0, \eta)$, yields
	\begin{align}\label{eq-kl-Psi-lm}
		\notag \widetilde{\varphi}^{\prime}\left(\Psi(x, y) - \Psi(\bar{x}, \bar{y})\right) 
		\dist \left(0, \partial \Psi(x, y) \right)
		& \ge \varphi_1^{\prime} \left( \tfrac 1 2( \Psi(x, y) - \Psi(\bar{x}, \bar{y}) ) \right) \dist(0, \partial f(x))
		\\ & \ \  + \varphi_2^{\prime} \left( \tfrac 1 2( \Psi(x, y) - \Psi(\bar{x}, \bar{y}) ) \right)
		\dist(0, \partial g(y)).
	\end{align}
		
	\underline{Case 1}: $f(x) - f(\bar x) \le g(y) - g(\bar y)$.
	In this case, we have
	\begin{equation*}
		0< \Psi(x, y) - \Psi(\bar{x}, \bar{y})
		= f(x) - f(\bar x) + g(y) - g(\bar y) \le 2 ( g(y) - g(\bar y)) < 2 \eta \le 2 \eta_2.
	\end{equation*}
	Since $\varphi_2^{\prime}$ is nonincreasing on $(0, \eta_2)$, this implies
	\begin{equation*}
		\varphi_{2}^{\prime} \left( \tfrac{1}{2} ( \Psi(x, y) - \Psi(\bar{x}, \bar{y}) ) \right) \dist(0, \partial g(y))
		\ge \varphi_2^{\prime} \left(  g(y) - g(\bar y) \right)
		\dist(0, \partial g(y)) \ge 1,
	\end{equation*}		
	where the last inequality holds thanks to \eqref{eq-kl-g}.
	This, together with \eqref{eq-kl-Psi-lm}, implies \eqref{eq-kl-2}.
	
	\underline{Case 2}: $f(x) - f(\bar x) > g(y) - g(\bar y)$.
	In this case, we have
	\begin{align*}
		& 0< \Psi(x, y) - \Psi(\bar{x}, \bar{y})
		= f(x) - f(\bar x) + g(y) - g(\bar y) \le 2 \left( f(x) - f(\bar x) \right)
		\\ & = 2 ( \Psi(x, y) - \Psi(\bar{x}, \bar{y}) - (g(y) - g(\bar y)) ) < 4 \eta \le 2 \eta_1.
	\end{align*}
	Since $\varphi^{\prime}_1$ is nonincreasing on $(0, \eta_1)$, we have 
	\begin{equation*}
		\varphi_{1}^{\prime} \left( \tfrac{1}{2} ( \Psi(x, y) - \Psi(\bar{x}, \bar{y}) ) \right) \dist(0, \partial f(x))
		\ge \varphi_1^{\prime} \left(  f(x) - f(\bar{x}) \right)
		\dist(0, \partial f(x)) \ge 1,
	\end{equation*}	
	where the last inequality holds because of \eqref{eq-kl-f}.
	This, together with \eqref{eq-kl-Psi-lm}, implies \eqref{eq-kl-2}.
\end{proof}

\end{document}